\long\def\symbolfootnote[#1]#2{\begingroup%
\def\thefootnote{\fnsymbol{footnote}}\footnote[#1]{#2}\endgroup}
\long\def\symbolfootnote[#1]#2{\begingroup%
\def\thefootnote{\fnsymbol{footnote}}\footnote[#1]{#2}\endgroup}
\def\vint{\mathop{\mathchoice%
          {\setbox0\hbox{$\displaystyle\intop$}\kern 0.22\wd0%
           \vcenter{\hrule width 0.6\wd0}\kern -0.82\wd0}%
          {\setbox0\hbox{$\textstyle\intop$}\kern 0.2\wd0%
           \vcenter{\hrule width 0.6\wd0}\kern -0.8\wd0}%
          {\setbox0\hbox{$\scriptstyle\intop$}\kern 0.2\wd0%
           \vcenter{\hrule width 0.6\wd0}\kern -0.8\wd0}%
          {\setbox0\hbox{$\scriptscriptstyle\intop$}\kern 0.2\wd0%
           \vcenter{\hrule width 0.6\wd0}\kern -0.8\wd0}}%
          \mathopen{}\int}
\newcommand{\Om}{\Omega}
\newcommand{\C}{\mathbb{C}}
\newcommand{\R}{\mathbb{R}}
\newcommand{\Hei}{{\mathbb{H}}_{1}}
\newcommand{\HW}{{HW}^{1,s}}
\newcommand{\HWtw}{{HW}^{1,2}}
\newcommand{\HWtwloc}{{HW}^{1,2}_{loc}}
\newcommand{\HWzero}{{HW}^{1,s}_{0}}
\newcommand{\ga}{\gamma}
\newcommand{\bZ}{{\bar Z}}
\definecolor{blau}{rgb}{0.1,0.0,0.9}
\definecolor{funk}{rgb}{0.1,0.4,0.9}
\newcounter{komcounter}
\numberwithin{komcounter}{section}
\newcommand{\dbd}[2]{\frac{\partial#1}{\partial #2}}
\def\XXint#1#2#3{{\setbox0=\hbox{$#1{#2#3}{\int}$}
     \vcenter{\hbox{$#2#3$}}\kern-.5\wd0}}
\theoremstyle{plain}
\newtheorem{theorem}{Theorem}[section]
\newtheorem{thm}{Theorem}[section]
\newtheorem{lem}[theorem]{Lemma}
\newtheorem{cor}[theorem]{Corollary}
\newtheorem{prop}[theorem]{Proposition}
\newtheorem{observ}[theorem]{Observation}
\theoremstyle{definition}
\newtheorem{defn}[theorem]{Definition}
\newtheorem{ex}{Example}
\newtheorem{rem}{\textnormal{\textbf{Remark}}}
\begin{document}

\title {Schwarzians on the Heisenberg group}

\author{
Tomasz Adamowicz{\small{$^*$}}
\\
\it\small Institute of Mathematics, Polish Academy of Sciences \\
\it\small ul. \'Sniadeckich 8, 00-656 Warsaw, Poland\/{\rm ;}
\it\small T.Adamowicz@impan.pl
\\
\\
Ben Warhurst
\\
\it\small Institute of Mathematics,
\it\small University of Warsaw,\\
\it\small ul. Banacha 2, 02-097 Warsaw, Poland\/{\rm ;}
\it\small B.Warhurst@mimuw.edu.pl
}

\date{}
\maketitle

\footnotetext[1]{T. Adamowicz and B. Warhurst were supported by the grant of the National Science Center, Poland (NCN), UMO-2017/25/B/ST1/01955.}

\begin{abstract}
 We study various notions of the Schwarzian derivative for contact mappings in the Heisenberg group $\Hei$ and introduce two definitions: (1) \emph{the CR Schwarzian derivative} based on the conformal connection approach studied by Osgood and Stowe and, recently, by Son; (2) \emph{the classical type Schwarzian} refering to the well-known complex analytic definition. In particular, we take into consideration the effect of conformal rigidity and the limitations it imposes. Moreover, we study the kernels of both Schwarzians and the cocycle conditions. Our auxiliary results include a characterization of the contact conformal vector fields.

 Inspired by ideas of Chuaqui--Duren--Osgood~\cite{CDO}, Hern\'andez, Mart\'in and Venegas~\cite{HM, HV}, we introduce the \emph{Preschwarzian} for mappings in $\Hei$. Furthermore, we study results in the theory of subelliptic PDEs for the horizontal Jacobian and related differential expressions for harmonic mappings and the \emph{gradient harmonic mappings}, the latter notion introduced here in the setting of $\Hei$.
\newline
\newline \emph{Keywords}: Carnot group, conformal map, harmonic map, Heisenberg group, Jacobian, Laplacian, Lie algebra, preschwarzian, schwarzian, subsolution, supersolution.
\newline
\newline
\emph{Mathematics Subject Classification (2020):} Primary: 53A30; Secondary: 32V20, 35H20, 35R03
\end{abstract}

\section{Introduction} The Schwarzian has many interpretations and contexts, see Valentin Ovsienko and Sergei Tabachnikov \cite{VTS} for a survey. The context we will be focusing on in this discussion is with regard to conformal mappings. In \cite{VTS} they give the following algorithm for considering multi-dimensional versions of the Schwarzian: a) choose a group of diffeomorphisms and a
subgroup $G$ that has a nice geometrical meaning,
b) find a $G$-invariant 1-cocycle on the group of
diffeomorphisms,
c) (the most important step) check that no one did
it before. One of the results of this paper suggests that this algorithm should have also have a part d) which says: consider the consequences of rigidity properties of the group $G$.

Recall that the Schwarzian derivative of a locally conformal mapping $f$ on a domain in the complex plane is given by
\begin{align}
S(f)&= \frac{f'''}{f'}-\frac{3}{2} \left(\frac{f''}{f'}\right)^2. \label{CplainDef}
\end{align}
 Its vanishing distinguishes M\"obius transformations amongst local conformal mappings. An  important property of the Schwarzian is the composition formula
 $$
 S(f \circ g)= S(g) + (S(f) \circ g)(g')^2.
 $$
 It then follows that the Schwarzian tensor
 $$
 \Sigma(f)(z) = S(f)(z) dz \otimes dz
 $$
 satisfies the following cocycle condition
\begin{align} \Sigma(f \circ g) = \Sigma(g) +g^*\Sigma(f). \label{cocycletenscond} \end{align}
Fundamental to the foundations for the Schwarzian is the fact that locally conformal mappings form an infinite dimensional pseudo group. The abundance of conformal maps means that there is a distinction between local and global,  which is precisely what the vanishing of the Schwarzian detects.

If $f:(M, g) \to (M', g') $ is a locally conformal map between Riemannian manifolds of dimension $n \geq 2$, such that $f^*g'=e^{2\phi} g$, then Osgood and Stowe in \cite{OsgStow},  give the following definition:
$$
 \Sigma(f)=B_g( \phi) = \text{Hess}(\phi) - d\phi \otimes d \phi  -\frac{1}{n} ( \Delta \phi -\|\text{grad } \phi\|^2)g,
$$
 which by design, satisfies the cocycle condition \eqref{cocycletenscond} and agrees with the classical definition of $\Sigma(f)$ in dimension $2$.

In dimension $n>2$, the assumption that $f$ is locally conformal can make this Riemannian definition redundant if the geometries are conformally rigid.  Liouville type theorems can imply that a locally conformal map $f$ is the restriction of a globally conformal map, and $f$ is given by the action of a finite dimensional Lie group of automorphisms. In such a case, there is no distinction between local and global.

In the setting of CR manifolds, Son \cite{DuongNgocSon} gives a similar definition to that of Osgood and Stowe, aimed at characterising CR-M\"obius transformations between CR manifolds. For the Heisenberg group $\Hei$, the construction is as follows. If $\theta$ is the standard contact form on the Heisenberg group $\Hei$, then  a local diffeomorphism $f : U \subseteq \Hei \to \Hei$ is a contact mapping if and only if $f^*\theta=e^{2 \phi} \theta$ for some real valued function $\phi$. At a minimum, a M\"obius transformation should be a local contact diffeomorphism. Son defines the Schwarzian tensor of $f$ by $ \Sigma(f)=B_\theta(\phi)$ where $B_\theta(\phi)$ is the two form on $T^{1,0}\Hei \oplus T^{0,1}\Hei$ given by
\begin{align}
 B_\theta ( \phi) =  \text{Sym}\nabla^2 \phi- 4(\partial_\flat \phi \otimes \partial_\flat \phi  +  \bar \partial_\flat \phi \otimes \bar \partial_\flat \phi ) - ( \Delta_\flat \phi) L_\theta, \label{CRSchwarz}
\end{align}
 where $\nabla$ is the Tanaka--Webster connection, $\text{Sym}\nabla^2$ is twice the symmetrisation of $\nabla^2$, $\partial_\flat $ and $\bar \partial_\flat$ are the Dolbeault operators,  $ \Delta_\flat $ is the sub-Laplacian, and $  L_\theta$ is the Levi form. Although this definition makes sense for any local contact diffeomorphisms, it does not satisfy the cocycle condition \eqref{cocycletenscond} for such a wide class, and requires the additional restriction that the map is CR holomorphic in its complex coordinate when $\Hei$ is modeled on $\C \times \R$. The CR holomorphicity condition is the analogue of the map being holomorphic in the classical case. The definition of a M\"obius transformation is then given within the pseudogroup of CR contact diffeomorphisms as any such map $f$ that satisfies $ \Sigma(f) = 0$ and $\phi$ is CR-pluriharmonic (the real part of a CR holomorphic function). In the higher dimensional Heisenberg groups, the definition is the same, except that the assumption that $\phi$ is CR-pluriharmonic is removed, since according to \cite{DuongNgocSon} it can be shown to be a consequence of $ \Sigma(f) = 0$ which is not the case for $\Hei$.

The conformal rigidity of the Heisenberg group is an issue. More specifically, within the confines of the contact pseudo group, the three conditions: conformal, CR holomorphic in the complex coordinate and $f \in SU(1,2)$, are synonymous. Furthermore, the function $\phi$ is CR-pluriharmonic for all $f \in SU(1,2)$. So at the outset, the definition in \cite{DuongNgocSon} only applies to $f \in SU(1,2)$ and the conditions $ \Sigma(f) = 0$ and $\phi$ is CR-pluriharmonic are vacuous. For the Heisenberg groups $\mathbb{H}^n$, $n>1$, the story is much the same, the assumption that $f$ is contact and CR holomorphic in its complex coordinates implies $f \in SU(1,2n)$ and CR-pluriharmonicity of $\phi$ is a consequence of rigidity.

In this paper we present the details supporting the discussion above and investigate two types of Schwarzians resembling  \eqref{CplainDef}. First, motivated by the tensor~\eqref{CRSchwarz}, in Definition~\ref{def-cr-schw} we introduce the \emph{CR Schwarzian derivative} of a positively oriented contact mapping $f:\Om\to \Hei$ on an open set $\Om\subset \Hei$ defined as
\[
 S_{CR}(f)=\frac{1}{2} \lambda_{f } Z^2(\lambda_f^{-1}),
\]
where $\lambda_f=J_F$ is the horizontal Jacobian of $f=(f_1, f_2, f_3)$, and $F=f_1+i f_2$, see the presentation following Definition~\ref{def-contact}. As is customary, we will also consider $F$ to be a function taking values in $\R^2$.  In {\bf Section 5} we show that the class of contact mappings for which $S_{CR}$ vanishes contains the class of conformal mappings. Furthermore, it turns out that the kernel of $S_{CR}$ contains an infinite dimensional class of mappings that need not be closed under composition, see Observation~\ref{obs-CR-Schw-chainr} and its corollaries.

In {\bf Section 6} we introduce and study the \emph{classical type Schwarzian}, see Defintition~\ref{defn-cl-Schw}. If
 $f: \Omega \to \mathbb{H}^1$ is a contact mapping, we write $f=(f_1+if_2,f_3)=(F,f_3)$ and define
\begin{align}
S_{CL}(f)= Z \left (\frac{ Z^2 F}{ZF} \right )-\frac{1}{2} \left(\frac{Z^2F}{ZF}\right)^2= \frac{ Z^3 F}{ZF}-\frac{3}{2} \left(\frac{Z^2F}{ZF}\right)^2. 
\end{align}
at points where $ZF\not=0$. This classical type Schwarzian agrees with \cite{DuongNgocSon} when $f$ is conformal since it also vanishes for $f \in SU(1,2)$, see Theorem~\ref{thm61}. On the other hand, there are mappings $f$ such that $ S_{CR}(f)=0$ while $ S_{CL}(f) \ne 0$. In particular, the family of contact diffeomorphisms that are annihilated by the CR Schwarzian contains an infinite dimensional space arising as flows, most of which are not zeros for the classical type definition above, see  Example~\ref{ex3}. Moreover, the "right" cocycle condition holds for this classical Schwarzian when we change variable by an element of $SU(1,2)$, but it is not invariant when we normalise the values of $f$ by elements of $SU(1,2)$, as proven in Lemma~\ref{mycocycle}.

Finally, in {\bf Section 7} we introduce a notion of the \emph{Preschwarzian} in the Heisenberg group $\Hei$ for $C^1$ mapping with a positive horizontal Jacobian $\lambda_f=J_F>0$
\[
Pf:=Z\ln J_F.
\]
Similar notions have been studied, for instance, in the setting of harmonic mappings in the plane, see~\cite{HM}. In particular,~\cite{HM} shows for Preschwarzians several natural counterparts of results known for Schwarzians. The definition of the Preschwarzian relies on $J_F$, and therefore we will focus our attention on studying its properties and related expressions. As noticed by several authors, e.g. \cite{acs, HM, HV}, the Preschwarzian turns out to be a handy tool beyond the class of conformal mappings, especially for harmonic and quasiconformal mappings. Following this program,  we investigate super- and subharmonicity results for the class of harmonic mappings and the \emph{gradient harmonic mappings} (Definition~\ref{grad-harm}), the latter one introduced here in $\Hei$. Moreover, we relate gradient harmonic mappings to subelliptic PDEs and discover connections to the analysis of level sets (see the introductory paragraph in Section 7 for details).

The class of gradient harmonic mappings has not been investigated in the Heisenberg setting so far and we hope that our preliminary results for such mappings will draw more attention to their studies.

One of the key tools involved in the studies of both Schwarzians in the class of conformal mappings are the auxiliary results presented in {\bf Section 4}. In particular, we characterize contact conformal vector fields in terms of solutions to the system of equation $ZZ=0$, see Lemma~\ref{vzerosol} and its proof in the Appendix.

We should remark that there is another perspective on generalising the Schwarzian arising in the geometric theory of ordinary differential equations. In particular, there is a notion of the Schwarzian of a contact diffeomorphism of the Heisenberg group, related to classifying the family of third order equations $y'''=h(x,y,y',y'')$ that are equivalent to the homogeneous equation $y'''=0$ via the prolonged action of a contact diffeomorphism $f$ to the second jet space. The result is a system of four second order partial differential equations involving the coordinate functions of $f$, see \cite{Sato}. Ostensibly, this definition is not concerned with conformal mapping and is not related to the classical Schwarzian in an obvious geometric sense, hence we have not incorporated these results in our discussion.

\section{Preliminaries}

The Heisenberg group $\Hei$, modeled on $\mathbb{C} \times \mathbb{R}$, is given in coordinates $(z,t)$, where $z=x+iy \in \C$ and $t \in \R$,  by the multiplication rule
\begin{align}
(z_1,t_1)(z_2,t_2)&=(z_1+z_2, t_1+t_2 + 2\, {\rm Im}\,(z_1 \bar z_2)) \nonumber\\
&=(x_1 + x_2 , y_1 + y_2, t_1+t_2 + 2(x_2y_1-x_1y_2)). \label{Hei-mult}
\end{align}
We observe that $(z,t)^{-1}=(-z,-t)$ and denote the left translation $(z,t) \to (z_1,t_1)(z,t)$ by $\tau_{(z_1,t_1)}$. A basis for the left invariant vector fields is given by the following vector fields:
\begin{align}
 X = \dbd{}{x}+2y\dbd{}{t}, \quad   Y =\dbd{}{y}-2x\dbd{}{t} \quad {\rm and } \quad   T= \dbd{}{t}, \label{left-invariant-fields}
\end{align} with corresponding dual forms:
\begin{align*} dx, \quad dy \quad  \text{and} \quad   \theta = dt- 2y dx+ 2x dy.
\end{align*}
The left invariant fields \eqref{left-invariant-fields} have the one nontrivial bracket given by $[  X,  Y] = -4  T$ which ensures that any two points $p$ and $q$ in $\mathbb{H}^1$ can be connected by an absolutely continuous curve $\gamma:[0,1]\to\mathbb{H}^1$ with the property that
\begin{displaymath}
\dot{\gamma}(s)\in  \mathcal{H}_{\gamma(s)},\quad \text{a.e. }s\in [0,1],\text{ where } \mathcal{H}_p:= \mathrm{span}\{ X_p, Y_p\}.
\end{displaymath}
Such a $\gamma$ is called a \emph{horizontal curve}.

The \emph{sub-Riemannian distance} $d_s$ is defined by
\begin{align}
d_s(p,q)=\inf_{\gamma} \int_0^1 \sqrt{\dot{\gamma}_1(s)^2 + \dot{\gamma}_2(s)^2}\;\mathrm{d}s,
\label{submet} \end{align}
where the infimum is taken over all horizontal curves $\gamma=(\gamma_1,\gamma_2,\gamma_3):[0,1]\to \mathbb{H}^1$ that connect $p$ and $q$. It is well known that $d_s$ defines a left-invariant metric on $\mathbb{H}^1$ which is homogeneous under the \emph{Heisenberg dilations}  $(\delta_{\lambda})_{\lambda>0}$, given by
\begin{displaymath}
\delta_{\lambda}:\mathbb{H}^1 \to \mathbb{H}^1,\quad \delta_{\lambda}(z,t)=(\lambda z,\lambda^2 t).
\end{displaymath}
Any two homogeneous left-invariant metrics on $\mathbb{H}^1$ are bi-Lipschitz equivalent, and it is often more convenient to work with a metric which is given by an explicitly computable formula, rather than to use $d_s$. An example of such a metric is the \emph{Kor\'{a}nyi distance}, defined by
\begin{displaymath}
d_{\mathbb{H}^1}(p,q) = \|q^{-1}p\|_{\mathbb{H}^1},\quad \text{where }\,\,\|(x,y,t)\|_{\mathbb{H}^1}= \sqrt[4]{|z|^4 + t^2}.
\end{displaymath}
The length distance associated to $d_{\mathbb{H}^1}$ is exactly $d_s$.


\begin{defn}\label{def-contact}
 Let $\Om,\Om'\subset \Hei$ be domains in $\Hei$. We say that a local $C^1$ diffeomorphism $f=(f_1+i f_2, f_3):\Om\to\Om'$ is a \emph{contact mapping/transformation} if it preserves the contact structure, i.e.,
	\begin{equation}
	f^{*}\theta = \lambda_f \theta, \label{contact0}
	\end{equation}
	where $\lambda_f:\Om \to \R$ and $\lambda_f \not= 0$ in $\Om$.
\end{defn}
Condition~\eqref{contact0} can be equivalently stated in terms of \emph{the contact equations}:
\begin{equation}\label{contact-eqs}
Xf_3-2f_2Xf1+2f_1Xf_2=0,\quad Yf_3-2f_2Yf1+2f_1Yf_2=0.
\end{equation}
We would like to point out that in what follows we also study maps satisfying the contact equations but not necessarily local diffeomorphism assumption.

Note that the definition implies that $Df$ preserves the horizontal bundle, and
relative to the left invariant basis, we have
$$
Df = \begin{bmatrix}
Xf_1 & Yf_1  & T f_1 \\
Xf_2 & Yf_2  & T f_2 \\
0 & 0 & \lambda_f
\end{bmatrix}. $$
We set $$
D_Hf=\begin{bmatrix}
Xf_1 & Yf_1 \\
Xf_2 & Yf_2
\end{bmatrix}
$$
and note that Cartan's formula together with $ f^*d \theta = d \lambda_f \wedge \theta + \lambda_f d \theta $ imply that $\lambda_f=\det D_Hf:=J_F$ and $\det Df= (\lambda_f)^2$. A contact map for which $\lambda_f>0$ will called positively oriented. Notice that this notion differs from an orientation preserving mapping, since $\det Df>0$ need not imply that $\lambda_f>0$.

Finally, we recall that the contact equations~\eqref{contact-eqs} imply that
\begin{equation}\label{contact-jac}
 J_F=Tf_3-2f_2Tf_1+2f_1Tf_2.
\end{equation}

\section{Conformal Transformations}
In this section we recall the basic definition of a sub-Riemannian conformal map on $\Hei$, and the  properties these mappings have relative to the CR-structure of $\Hei$.

The sub-Riemannin distance defined in \eqref{submet}, derives from the sub-Riemannian metric tensor which on each $\mathcal{H}_p$ is determined by the requirement that $\{X_p,Y_p\}$ is orthonormal. We denote the sub-Riemannian metric tensor at $p$ by $\langle \, , \, \rangle_p$.
\begin{defn} A contact transformation $f$ of a domain $\Omega \subseteq \Hei$, is said to be conformal on $\Omega$ if it preserves the angles between intersecting horizontal curves in $\Omega$. More precisely,
$$
\langle Df(p)V , Df(p)W\, \rangle_p = e^{2 \phi(p)} \langle V , W\, \rangle_p \quad \hbox{for all }\, \, V,W \in \mathcal{H}_p
$$
or equivalently 	
\begin{equation}
( X f_1)^2+ ( X f_2)^2  =  ( Y f_1)^2+ ( Y f_2)^2 = e^{2 \phi} \quad {\rm and} \quad  ( X f_1)( Y f_1) +  ( X f_2)( Y f_2) =0. \label{confmap1}
\end{equation}
\end{defn}


Conformal transformations in the Heisenberg group $\Hei$ are $C^\infty$, see \cite{CapCow}. In particular, the class of local contact diffeomorphisms can be assumed to be $C^\infty$ for most intents and purposes surrounding conformal mapping theory.

The CR-structure on $\mathbb{H}^1$ is that of the $3$-sphere in $\C^2$ and given pointwise by
$$
T^{1,0} \Hei =  \text{span}_\C \left \{ Z=\frac{1}{2}(X-i Y) \right \} \quad  {\rm and} \quad T^{0,1} \Hei  =  \text{span}_\C \left \{ \bar Z=\frac{1}{2}(X+i Y) \right \}.
$$

Recall the following observation characterizing conformal mappings as CR holomorphic functions. The following lemma is known to hold for all Carnot groups which carry a CR structure, see \cite{CowLiOtazWu}.

\begin{lem} \label{confZbar}
A positively oriented local $C^1$ contact diffeomorphism $f=(F,f_3)$ is conformal, if and only if $\bar Z F=0$.
\end{lem}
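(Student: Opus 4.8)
The plan is to observe that both sides of the claimed equivalence depend only on the horizontal differential $D_Hf$: the contact component $f_3$ never enters \eqref{confmap1} nor the equation $\bar Z F=0$, so the whole statement reduces to a pointwise identity in the four real functions $Xf_1, Yf_1, Xf_2, Yf_2$. Writing $Z=\tfrac12(X-iY)$, $\bar Z=\tfrac12(X+iY)$ and $F=f_1+if_2$, I would first expand these Wirtinger-type operators and record the two algebraic identities
\[
|ZF|^2-|\bar Z F|^2 = Xf_1\,Yf_2-Xf_2\,Yf_1 = \det D_Hf = J_F=\lambda_f
\]
and
\[
4\,ZF\,\overline{\bar Z F} = \big[(Xf_1)^2+(Xf_2)^2-(Yf_1)^2-(Yf_2)^2\big]-2i\big[Xf_1\,Yf_1+Xf_2\,Yf_2\big].
\]
Both are verified by a direct (if slightly tedious) computation after substituting $2ZF=(Xf_1+Yf_2)+i(Xf_2-Yf_1)$ and $2\bar Z F=(Xf_1-Yf_2)+i(Xf_2+Yf_1)$.

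The second identity is the crux: its real and imaginary parts are precisely the two defects measured by the conformality conditions \eqref{confmap1}. Hence $f$ is conformal if and only if $ZF\,\overline{\bar Z F}=0$, and since the product of two complex numbers vanishes exactly when one factor does, this is equivalent to $ZF=0$ or $\bar Z F=0$. This is the Heisenberg transplant of the classical fact that a real-differentiable planar map is conformal precisely when it is either holomorphic or antiholomorphic; here $X,Y,Z,\bar Z$ simply play the roles of $\partial_x,\partial_y,\partial_z,\partial_{\bar z}$.

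It then remains to select the correct alternative using orientation. For the forward implication, if $f$ is conformal then $ZF=0$ or $\bar Z F=0$; but positive orientation gives $J_F>0$ (recall $\lambda_f\neq0$ since $f$ is a diffeomorphism), so the first identity forces $|ZF|^2=|\bar Z F|^2+J_F>|\bar Z F|^2\ge 0$, whence $ZF\neq0$ and therefore $\bar Z F=0$. Conversely, if $\bar Z F=0$ then the second identity gives $ZF\,\overline{\bar Z F}=0$, so \eqref{confmap1} holds and $f$ is conformal; here the first identity even recovers $J_F=|ZF|^2>0$, consistent with positive orientation. I expect no genuine obstacle: the only real work is the bookkeeping in verifying the two identities, and the single conceptual point is that because conformality is a purely horizontal, pointwise condition, no subelliptic PDE or contact subtlety intervenes and the problem collapses to linear algebra on $D_Hf$.
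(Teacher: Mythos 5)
Your proposal is correct and follows essentially the same route as the paper: both rest on the identities $\lambda_f=|ZF|^2-|\bar Z F|^2$ and $4\,ZF\,Z\bar F=\big[(Xf_1)^2+(Xf_2)^2-(Yf_1)^2-(Yf_2)^2\big]-2i\big[Xf_1Yf_1+Xf_2Yf_2\big]$ (note $Z\bar F=\overline{\bar Z F}$, so your second identity is literally the paper's equation \eqref{confCR}), with positive orientation used exactly as you do to exclude the alternative $ZF=0$. No gap; the argument matches the paper's proof of Lemma~\ref{confZbar}.
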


\begin{proof}
If $F=f_1+if_2$ then $ \lambda_f = |ZF|^2-|\bar Z F|^2$ and so the assumption $\lambda_f>0$ requires $ZF \ne 0$ and $ |ZF| > |\bar Z F|$. Moreover, the contact conditions become
\begin{align} Zf_3 = f_2 Zf_1 -f_1 Zf_2=\frac{i}{2} \Big ( \bar F ZF - F Z\bar F   \Big ). \label{zcontact}
\end{align}  Furthermore
\begin{align}
4 ZF Z \bar F &= (Zf_1)^2  + (Zf_2)^2 \nonumber\\
& =  ( X f_1)^2+ ( X f_2)^2  -  ( Y f_1)^2 - ( Y f_2)^2 - 2i \left ( ( X f_1)( Y f_1) +  ( X f_2)( Y f_2) \right )=0 \label{confCR}
\end{align}
and so it follows that a positively oriented conformal map satisfies $\bar Z F=0$, i.e., $F$ is a CR holomorphic function.

 Conversely, if $f$ is a positively oriented contact map and $\bar Z F=0$, then \eqref{confCR} implies that $f$ satisfies \eqref{confmap1} and is thus conformal.
\end{proof}

A Liouville type theorem for $\Hei$ follows from \cite{CapCow} and Theorem 8 in \cite{kr1}, and states that a  conformal transformations of a domain $\Omega \subseteq \Hei$ is given by the action of $SU(1,2)$ restricted to $\Omega$. In particular, the following lemma gives the explicit description of the action.
\begin{lem}[See \cite{kr1}] \label{conftypes} A conformal map decomposes into the $5$ following explicit types:
	\begin{enumerate}
		\item Left translation $\tau_p$,
		\item Dilation $\delta_r$,
		\item Rotation: $r_\varphi(z,t) = (e^{i \varphi} z, t)$,
		\item Inversion: $\iota(z,t)= \frac{-1}{|z|^4 +t^2 } ( z(|z|^2 +it) , t)=  ( \frac{z}{it -|z|^2  }, \frac{-t}{|z|^4 +t^2 })$,
		\item Reflection: $(z,t) \to (\overline{z},-t)$.
\end{enumerate}\end{lem}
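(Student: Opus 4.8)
The plan is to deduce the decomposition from the Liouville-type theorem stated just above: since every conformal transformation of a domain in $\Hei$ is the restriction of the action of an element of $SU(1,2)$, it suffices to show that $SU(1,2)$, together with the orientation-reversing reflection, is generated by the five listed maps. Concretely, I would realise $\Hei\cup\{\infty\}$ as the boundary of complex hyperbolic space on which $SU(1,2)$ acts, identify types 1--3 with the stabiliser of the point $\infty$, and use the inversion to move a general map into that stabiliser.

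First I would check that each of the five maps is conformal and record its fixed-point behaviour. For types 1--4 this amounts, by Lemma~\ref{confZbar}, to verifying $\bar Z F=0$ for the complex coordinate $F$. Translations $\tau_p$ and dilations $\delta_r$ are the underlying group operations and transform $F$ by left multiplication and by scaling by $r$, so $\bar Z F=0$ is immediate; for the rotation $r_\varphi$ one has $F=e^{i\varphi}z$ and $\bar Z F=e^{i\varphi}\bar Z z=0$. The only nontrivial computation is the inversion $\iota$, where $F=\frac{z}{it-|z|^2}$ and one must evaluate $\bar Z F$ using $\bar Z=\tfrac12(X+iY)$ directly in coordinates; I expect this to vanish, confirming $\iota$ is positively oriented conformal. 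A short calculation also shows $\iota^2=\mathrm{id}$, so $\iota$ is an involution interchanging the origin and $\infty$. The reflection $(z,t)\mapsto(\bar z,-t)$ is orientation-reversing: here $F=\bar z$ satisfies $ZF=0$ rather than $\bar Z F=0$, so it is the anti-CR-holomorphic generator accounting for negatively oriented conformal maps.

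Next I would establish the generation statement. The conformal maps fixing $\infty$ form the Heisenberg similarity group, i.e. the semidirect product of translations with dilations and rotations (types 1--3); granting the Liouville theorem, this is exactly the parabolic stabiliser of $\infty$ in $SU(1,2)$. For a general orientation-preserving conformal $f$, if $f(\infty)=\infty$ then $f$ is such a similarity and we are done. Otherwise put $p=f(\infty)$ and consider $g=\iota\circ\tau_{p^{-1}}\circ f$: the translation sends $p$ to the origin and $\iota$ sends the origin to $\infty$, so $g$ fixes $\infty$ and is therefore a similarity $s$ built from types 1--3. Using $\iota^{-1}=\iota$ and $\tau_{p^{-1}}^{-1}=\tau_p$ gives $f=\tau_p\circ\iota\circ s$, a composition of types 1--4. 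Finally, any orientation-reversing conformal map is obtained by post-composing an orientation-preserving one with the reflection (type 5), which completes the decomposition.

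The main obstacle is the identification in the previous paragraph of the stabiliser of $\infty$ with the group generated by translations, dilations and rotations; equivalently, that a conformal self-map of $\Hei$ fixing both $0$ and $\infty$ with trivial conformal factor is forced to be a rotation. This is precisely where the rigidity encoded in the Liouville theorem and the structure of $SU(1,2)$ does the work: the compact part of the stabiliser of two boundary points is the circle group $U(1)$ acting as $r_\varphi$. A secondary technical point is the explicit verification that $\iota$ is conformal and an involution; this is the one genuinely computational step, and care with the sign conventions in the definition of $\iota$ is needed so that the fixed-point bookkeeping $\iota(0)=\infty$ and $\iota(\infty)=0$ comes out correctly.
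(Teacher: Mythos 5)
The paper offers no proof of this lemma: it is stated as a known result, attributed to Kor\'anyi--Reimann \cite{kr1}, with the Liouville-type theorem (from \cite{CapCow} and Theorem~8 of \cite{kr1}) quoted immediately beforehand and the two-case decomposition~\eqref{types12} recorded immediately afterwards. Your outline is correct and is precisely the argument the paper is implicitly relying on: reduce to the $SU(1,2)$ action via Liouville, identify the stabiliser of $\infty$ with the similarity group generated by types 1--3, and use the involution $\iota$ (whose conformality, i.e.\ $\bar Z F=0$ for $F=z/(it-|z|^2)$, and the identity $\iota^2=\mathrm{id}$ do check out by direct computation) to handle maps not fixing $\infty$, with the reflection accounting for the orientation-reversing case.
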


 The maps $1$ to $4$ preserve orientation and generate $SU(1,2)$.  In particular, every  $ g \in SU(1,2)$ has a decomposition either as
\begin{equation}\label{types12}
 \text{type 1: }\tau_p \circ r_\varphi \circ \delta_r \circ \tau_q \quad  {\rm or} \quad  \text{type 2: }\tau_p  \circ \iota \circ r_\varphi  \circ \delta_r \circ \tau_q
\end{equation}
 depending on $\infty$ being a fixed point or not, see \cite{kr1}.

For the higher dimensional Heisenberg groups $\mathbb{H}^n$ modeled on $\C^n \times \R$, the story is much the same, i.e., a conformal map decomposes into the $4$ following explicit types: left translation, dilation, $SU(n)$ and inversion. The inversion is given by the same formulas as in item 4 where $|z|^2= |z_1|^2 + \dots + |z_n|^2$ and the reflection is defined by item 5 but with $z \in \C^n$. See \cite{kr2} for details.

%


\section{Conformal Vector Fields}

Since the Schwarzians we are considering are third order operators, it will be useful to know some fundamental indentities for conformal maps at order three. Such identities arise naturally from the fact that the induced action of a conformal map on vector fields must preserve conformal vector fields. More precisely, a conformal vector field $V$ is one that generates a flow of conformal maps. So if $g_t$ is the flow of $V$, then by definition $h_t=f \circ g_t \circ f^{-1}$ is the flow of $Df(V)$, and obviously $h_t$ is conformal for a conformal $f$, so $Df(V)$ is conformal. This approach will reveal in some organised way, the fundamental higher order PDE that a conformal map should satisfy.

Let
 $$
 V= v_1{X} +v_2 Y-4v_0{T}
 $$
 be a contact vector field, i.e., the flow of $V$ consists of contact diffeomorphisms, or equivalently, $[V,\mathcal{H}] \subseteq \mathcal{H}$. The bracket condition implies that
\begin{align*}
{X} v_0=-v_2 \quad {\rm and} \quad   Y v_0=v_1.
\end{align*}
%

If $V$ generates a flow of conformal mappings, then the conditions \eqref{confmap1} imply that the following conditions must be satisfied:
\begin{align}
{X}v_1 =  Y v_2  \quad {\rm and} \quad   Y v_1 = -{X} v_2. \label{subconf1}
\end{align}
Thus contact and conformality require that the following are satisfied:
\begin{align}
{X}^2v_0= Y^2v_0  \quad {\rm and} \quad  {X}  Y v_0= - Y {X}v_0. \label{ContConf}
\end{align}

We are in a position to state the key observation of this section.
\begin{lem} \label{vzerosol}
The following conditions are equivalent:
\begin{itemize}
\item[(1)] A contact vector field $V =v_1{X} +v_2 Y-4v_0{T}$ is conformal.
\item[(2)] The equations \eqref{ContConf} hold if and only if $Z^2v_0=0$ and have the $8$ real parameter solution
$$
v_0(x, y, t) =  c_1(x^4+2 x^2 y^2 +y^4+t^2)+ c_2(ty-xy^2-x^3) + c_3(tx + x^2y + y^3) +  c_4(x^2+y^2)+ c_5x + c_6y + c_7t + c_8.
$$
\end{itemize}
\end{lem}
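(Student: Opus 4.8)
The plan is to prove the chain of equivalences: (1) $\Leftrightarrow$ \eqref{ContConf} $\Leftrightarrow$ $Z^2v_0=0$ $\Leftrightarrow$ the explicit $8$-parameter family, treating $v_0$ as real-valued throughout. The first equivalence is essentially recorded in the discussion preceding the lemma: the contact condition forces $v_1=Yv_0$ and $v_2=-Xv_0$, and the conformal Killing condition \eqref{subconf1}, which is the linearisation of \eqref{confmap1} along the flow of $V$, reads $Xv_1=Yv_2$ and $Yv_1=-Xv_2$; substituting $v_1,v_2$ turns these into precisely \eqref{ContConf}. I would stress that \eqref{subconf1} is not only necessary but also sufficient for conformality of the flow, since it says exactly that the Lie derivative of the horizontal metric along $V$ is a pointwise multiple of that metric.

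Next I would convert \eqref{ContConf} into the single complex equation $Z^2v_0=0$. Writing $Z=\tfrac12(X-iY)$ and expanding, $4Z^2=(X^2-Y^2)-i(XY+YX)$. For real-valued $v_0$ both $(X^2-Y^2)v_0$ and $(XY+YX)v_0$ are real, so $Z^2v_0=0$ holds if and only if its real and imaginary parts vanish separately, i.e. $X^2v_0=Y^2v_0$ and $XYv_0=-YXv_0$, which is exactly \eqref{ContConf}. Equivalently, since $v_0$ is real one has $\overline{Z^2v_0}=\bar Z^2 v_0$, so $Z^2v_0=0\Leftrightarrow \bar Z^2v_0=0$, and it is this overdetermined pair that ultimately forces finite dimensionality.

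The substantive step is to integrate $Z^2v_0=0$ and extract the explicit family. In the coordinates $z=x+iy$, $t$ one has $Z=\partial_z+i\bar z\,\partial_t$; introducing $\bar\zeta:=t-i|z|^2$ (with conjugate $\zeta=t+i|z|^2$) one checks $Z\bar z=Z\bar\zeta=0$, so that in the coordinates $(z,\bar z,\bar\zeta)$ the field $Z$ is simply $\partial_z$. Hence $\ker Z$ among polynomials is spanned by the monomials $\bar z^a\bar\zeta^b$, and from $Z^2v_0=0$ one obtains the representation $v_0=z\,W(\bar z,\bar\zeta)+G(\bar z,\bar\zeta)$ with $W,G\in\ker Z$. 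I would then impose reality: equating $v_0$ with $\bar v_0=\bar z\,\overline W(z,\zeta)+\overline G(z,\zeta)$ and using the single relation $\zeta-\bar\zeta=2i|z|^2=2iz\bar z$ to eliminate one coordinate, the identity becomes a matching of $z\cdot(\text{anti-holomorphic})$, $\bar z\cdot(\text{holomorphic})$ and pure (anti)holomorphic terms. The cleanest way to organise this is by Heisenberg homogeneity: since $X,Y$ are homogeneous of degree $-1$ under $\delta_\lambda$, the operator $Z^2$ is homogeneous of degree $-2$ and preserves the grading, so I would solve degree by degree on homogeneous components $v_0^{(d)}$.

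The main obstacle, and the heart of the Appendix computation, will be showing that the graded solution space is exactly eight-dimensional, with weights $0,1,2,3,4$ contributing dimensions $1,2,2,2,1$ and no homogeneous part of weight $\ge 5$ surviving the reality constraint. The listed polynomials ($c_8$; $c_5x+c_6y$; $c_4(x^2+y^2)$ and $c_7t$; the two weight-three cubics; and $c_1(|z|^4+t^2)$) are representatives of these graded pieces, and a direct application of $Z^2$ verifies each lies in the kernel (for instance $Z^2|z|^4=2\bar z^2=-Z^2t^2$, so $Z^2(|z|^4+t^2)=0$). The vanishing of the high-weight part is precisely the infinitesimal form of the conformal rigidity of $\Hei$: the flows of these fields are the one-parameter subgroups generated by the translations, dilation, rotation and inversion of Lemma~\ref{conftypes}, so the count $1+2+2+2+1=8=\dim\mathfrak{su}(1,2)$ is forced, and one may alternatively invoke the Liouville theorem to conclude that these eight solutions exhaust the kernel once eight independent ones have been exhibited.
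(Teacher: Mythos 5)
Your reductions at the start are sound and agree with the paper's setup: the contact condition gives $v_1=Yv_0$, $v_2=-Xv_0$; the linearised conformality condition \eqref{subconf1} turns into \eqref{ContConf}; and the identity $4Z^2=(X^2-Y^2)-i(XY+YX)$ applied to a real-valued $v_0$ shows that \eqref{ContConf} is exactly $Z^2v_0=0$. The change of variables $\bar\zeta=t-i|z|^2$, under which $Z$ becomes $\partial_z$, is also correct and is a genuinely different organisation from the Appendix, which instead derives $T^3v_0=0$ from a chain of commutator identities among $Z,\bar Z,T$, writes $v_0=a(x,y)t^2+b(x,y)t+c(x,y)$, and integrates the resulting system for $a,b,c$ row by row.

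The decisive step, however, is missing. You write the representation $v_0=zW(\bar z,\bar\zeta)+G(\bar z,\bar\zeta)$ and then state that the "main obstacle" will be showing the graded solution space is exactly eight-dimensional --- but that obstacle \emph{is} the lemma. Two concrete problems: (i) your description of $\ker Z$ as spanned by the monomials $\bar z^a\bar\zeta^b$ is justified only for polynomials, and nothing in your argument forces $v_0$ to be polynomial; in the paper this is precisely what the commutator identities culminating in $T^3v_0=0$, followed by the explicit integration, deliver. Likewise the decomposition into $\delta_\lambda$-homogeneous components is not available for a general smooth $v_0$ on a domain away from the origin. (ii) The reality constraint matching $zW+G$ against $\bar z\,\overline W+\overline G$ modulo $\zeta-\bar\zeta=2iz\bar z$ is exactly the tedious computation, and it is not carried out. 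Your fallback --- cap the dimension at $8=\dim\mathfrak{su}(1,2)$ via the Liouville theorem and exhibit eight independent solutions --- is a legitimate and shorter alternative to the paper's direct integration, but as written it too is only gestured at: you verify only one of the eight polynomials lies in $\ker Z^2$; you would need the (standard but not free) argument that a conformal vector field on a domain integrates to a differentiable curve in $SU(1,2)$ so that Liouville applies infinitesimally; and the cubic and quartic solutions correspond to translations conjugated by the inversion, not to the inversion itself, which is an involution rather than the generator of a one-parameter subgroup. Either route can be completed, but as it stands the proposal establishes the easy equivalences and defers the substantive one.
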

The proof of the lemma is elementary, however the computations are tedious and therefore relegated to the Appendix.

Let $f$ be a contact map. Denote by $h=f^{-1}$ and by $q=h(p)$. Then we have that
\begin{align*}
W_q=(h_*V)_{q}  &= \left (  X h_1(p)v_1(p) +   Y h_1(p)v_2(p) -4 Th_1(p)v_0(p) \right )  X_{q}\\
& \quad + \left (  X h_2(p)v_1(p) +   Y h_2(p)v_2(p) -4 Th_2(p)v_0(p) \right )  Y_q\\
& \quad -4  \lambda_h(p) v_0(p)  T_q \\
& = w_1(q)  X_q +w_2(q)  Y_q -4w_0(q)  T_q,
\end{align*}
where
$$
w_0(h(p)) =\lambda_h(p) v_0(p).
$$
Since $p=f(q)$ we get $w_0(q) = \lambda_h(f(q)) v_0(f(q))$, and then use $Dh \circ f =Df^{-1}$ to get
$w_0(q) = \lambda_f(q)^{-1} v_0(f(q))$. 
Applying Lemma~\ref{vzerosol} to the right hand side of the previous equation, we get
\[
 w_0(q)=\lambda_f^{-1}(f_3^2 + (f_1^2+f_2^2)^2)c_1+\lambda_f^{-1}(f_3 f_2-f_1(f_2^2 +f_1^2))c_2+\cdots+
 \lambda_f^{-1}c_8.
\]
There are then $8$ fundamental cases, with the $i$-th case determined by letting $c_j=0$ for all $j\not=i$ and $i=1,\ldots, 8$:
\begin{align}
1) \quad c_1 :  w_0&= \lambda_f^{-1}(f_3^2 + (f_1^2+f_2^2)^2) \nonumber \\
2) \quad c_2 :  w_0&= \lambda_f^{-1}(f_3 f_2-f_1(f_1^2 +f_2^2)) \nonumber \\
3) \quad c_3 :  w_0&= \lambda_f^{-1}(f_3 f_1+f_2(f_1^2 + f_2^2) ) \nonumber \\
4) \quad c_4 :  w_0&= \lambda_f^{-1}(f_1^2+f_2^2) \label{confvfieldlist}  \\
5) \quad c_5 :  w_0&= \lambda_f^{-1} f_1 \nonumber  \\
6) \quad c_6 :  w_0&= \lambda_f^{-1} f_2 \nonumber \\
7) \quad c_7 :  w_0&= \lambda_f^{-1} f_3 \nonumber \\
8) \quad c_8 :  w_0&= \lambda_f^{-1}. \nonumber
\end{align}
If $f$ is conformal, then the vector field $W=h_*V$ is conformal provided that $V$ is conformal, and so $w_0$ must satisfy $Z^2 w_0=0$. Hence, it follows that the cases above give 8 third order PDEs that a conformal map $f$ must satisfy. Conversely, we have the following Lemma.

\begin{lem} If a contact map $h$ satisfies $Z^2 w_0=0$ in the cases 4), 5), 6) and 8) given at \eqref{confvfieldlist}, then $F=f_1 + i f_2$ satisfies $ZF Z\bar F=0$. Moreover, if $f$ is positively oriented then $\bar ZF=0$ and $f$ is conformal.
\end{lem}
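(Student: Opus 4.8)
The plan is to complexify the four hypotheses and reduce everything to a single Leibniz computation for the first-order operator $Z$. Write $u=\lambda_f^{-1}$; since $\lambda_f\neq 0$ for a contact map, $u$ is nowhere zero. I would observe that $f_1^2+f_2^2=F\bar F$ and that $Z^2$ is $\C$-linear, so cases 5) and 6) of \eqref{confvfieldlist} combine into $Z^2(uF)=Z^2(uf_1)+i\,Z^2(uf_2)=0$ and $Z^2(u\bar F)=Z^2(uf_1)-i\,Z^2(uf_2)=0$, while case 8) reads $Z^2u=0$ and case 4) reads $Z^2(uF\bar F)=0$. Hence the content of the hypothesis is exactly the four scalar identities $Z^2u=0$, $Z^2(uF)=0$, $Z^2(u\bar F)=0$ and $Z^2(uF\bar F)=0$.

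The heart of the argument is then a single algebraic identity coming from the fact that $Z$ is first order, so that $Z^2(ab)=(Z^2a)b+2(Za)(Zb)+a(Z^2b)$. Expanding $Z^2(uF\bar F)$ by this rule and substituting $Z^2u=0$, the terms regroup into
\[
Z^2(uF\bar F)=\bar F\,Z^2(uF)+F\,Z^2(u\bar F)+2u\,(ZF)(Z\bar F).
\]
Feeding in the two remaining hypotheses $Z^2(uF)=0$ and $Z^2(u\bar F)=0$ collapses this to $0=2u\,(ZF)(Z\bar F)$, and since $u=\lambda_f^{-1}\neq 0$ this forces $ZF\,Z\bar F=0$, which is the first assertion.

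For the second claim I would invoke the relation $\lambda_f=|ZF|^2-|\bar Z F|^2$ established in the proof of Lemma~\ref{confZbar}. If $f$ is positively oriented then $\lambda_f>0$, which forces $ZF\neq 0$; combined with $ZF\,Z\bar F=0$ this yields $Z\bar F=0$. Since $X$ and $Y$ are real vector fields one has $Z\bar F=\overline{\bar Z F}$, so $Z\bar F=0$ is equivalent to $\bar Z F=0$, and Lemma~\ref{confZbar} then identifies $f$ as conformal.

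I do not expect a genuine obstacle, as the whole argument is a finite algebraic manipulation; the only delicate point is organising the Leibniz expansion of $Z^2(uF\bar F)$ so that, after using $Z^2u=0$, the cross terms reassemble precisely into $\bar F\,Z^2(uF)+F\,Z^2(u\bar F)$, leaving the single genuinely new contribution $2u\,(ZF)(Z\bar F)$. It is worth noting that this cancellation is exactly why only cases 4), 5), 6) and 8) are required: the corresponding $v_0$ are $x^2+y^2$, $x$, $y$ and $1$, i.e. polynomials in $F$ and $\bar F$ with no independent $t$-dependence, so their pullbacks close up under the product rule without needing the contributions carried by the $t$-terms in cases 1), 2), 3) and 7).
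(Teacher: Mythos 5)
Your proposal is correct and follows essentially the same route as the paper: case 8) gives $Z^2(\lambda_f^{-1})=0$, cases 5) and 6) combine into $Z^2(\lambda_f^{-1}F)=Z^2(\lambda_f^{-1}\bar F)=0$, and the Leibniz expansion of case 4) then reduces to $2\lambda_f^{-1}\,ZF\,Z\bar F=0$, exactly as in the paper's computation. Your completion of the ``moreover'' part via $\lambda_f=|ZF|^2-|\bar ZF|^2>0$ and Lemma~\ref{confZbar} is also the intended argument (the paper leaves it implicit), so there is nothing to correct.
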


\begin{proof} Cases 5), 6) and 8) imply that
	\begin{align*}
	0 &= Z^2 (\lambda_f^{-1} F) = 2 Z(\lambda_f^{-1})ZF + \lambda_f^{-1}Z^2F\\
	0 &= Z^2 (\lambda_f^{-1} \bar F)=2 Z(\lambda_f^{-1})Z \bar F + \lambda_f^{-1}Z^2 \bar F.
	\end{align*}	
	Calculating from case 4) by using the identities above we get	
	\begin{align*}
	0= Z^2  (\lambda_f^{-1} |F|^2) &=  Z^2 (\lambda_f^{-1} F)  \bar F +  Z^2 (\lambda_f^{-1} \bar F) F + 2 \lambda_f^{-1} ZF Z\bar F.
	\end{align*}
\end{proof}
In view of the previous lemma, the cases 1), 2), 3) and 7) in \eqref{confvfieldlist} can be accounted for as consequences of the contact conditions.

\section{The CR Schwarzian on $\Hei$}

In this section we look closely at the CR Schwarzian discussed above at~\eqref{CRSchwarz}. Basing on this notion we define the \emph{CR Schwarzian derivative} for contact mappings in $\Hei$, see Definition~\ref{def-cr-schw}. In particular, we show that the class of contact mappings for which the CR Schwarzian derivative vanishes contains the class of conformal mappings, see
the presentation following Definition~\ref{def-cr-schw} and also Examples~\ref{ex1} and~\ref{ex1SL2R}. Moreover, the nullset of the CR Schwarzian derivative contains (an infinite dimensional) class of mappings that need not be closed under composition, see Observation~\ref{obs-CR-Schw-chainr} and its consequences in Claims 1-3. To this end we begin by recalling some basic concepts from CR geometry.

The triple $(\Hei, T^{1,0} \Hei, \theta)$ is a strictly pseudo-convex pseudo-hermitian manifold of CR dimension one in the sense of Webster [30]. A complex structure $J$ is given by $J  X= Y$ and $J  Y= - X$ and extending linearly. It then follows that $JZ=iZ$ and $J\overline{Z}=i \bar Z$. Furthermore, the Reeb field is given by $T =\dbd{}{t}= \frac{i}{2}[Z, \bar Z]$ and the corresponding dual forms are $$ dz \, = \, dx+i dy, \quad d \bar z\, = \, dx-idy,\quad   \theta \, = \, dt -i \bar z dz +i z d \bar z,$$ and $$ d \theta =4 dx \wedge dy = 2i dz \wedge d\bar z.$$
The Levi-form is given by
$$
L_\theta (V_1,  V_2 ) = d \theta (V_1, J  V_2 ).
$$
 We note that the Levi form defines a sub-Riemmanian distance which differs from \eqref{submet} by the dilation $\delta_2$, hence the conformal transformations relative to the Levi form are those given in \eqref{conftypes}.



Moreover, in order to further align our notation with \cite{DuongNgocSon}, we note that for a $C^1$ complex valued function $F$ in $\Hei$, we have
\begin{align*}
dF &= (ZF) dz +  (\overline{Z} F) d \bar z + (TF) \theta:= \partial_\flat F  + \bar \partial_\flat F +(TF) \theta.
\end{align*}
If $g=(G,g_3)$, $G=g_1+i g_2$, is a local diffeomorphism of $\Hei$, then the chain rule takes the form
\begin{align}
 V(F \circ g) = g^*dF(V)= (ZF \circ g) VG + (\bar Z F \circ g) V \bar G + (TF \circ g) g^*\theta(V),\label{ComplexChainRule}
 \end{align}
 where $V$ is any vector field with values in $\C \otimes T\Hei$, $F$ is a $C^1$ complex valued function $F$ and $g$ is a local diffeomorphism. Formula \eqref{ComplexChainRule} will be used extensively for obvious reasons.  An important case occurs when $g$ satisfies the contact conditions and $V$ is horizontal, then the third term vanishes. That is the calculus reads the same way as it does for the standard complex partial derivatives in the plane.

Now let us consider CR-pluriharmonicity. Following \cite{DragTomCR}, we recall that a real-valued function $u \in C^\infty (\Om)$, where $\Om \subset  \Hei$ is open, is said to be CR-pluriharmonic, if
for any point $p \in \Om$, there is an open neighborhood $\Om'$ of $p$, and a real valued function $v \in C^\infty (\Om')$, such that $\bar Z(u + iv) = 0$ on $\Om'$. The following lemma characterises CR-pluriharmonicity.

%
%

\begin{lem}[\cite{DragTomCR}, Lemma 5.2]  A real-valued function $u \in C^\infty (\Om)$, where $\Om \subset  \Hei$ is open, is CR-pluriharmonic if and only if
	\begin{align}
Z^2 \bar Z u=0 \label{pluricond}
	\end{align}
for all $(z,t) \in \Om$. \end{lem}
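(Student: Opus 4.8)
The plan is to use the characterization built into the definition: $u$ is CR-pluriharmonic precisely when it is locally the real part of a CR holomorphic function $w=u+iv$, and to translate the third order equation $Z^2\bar Z u=0$ into the integrability condition for the existence of the conjugate $v$. Throughout I would use the commutation relations $[Z,\bar Z]=-2iT$ and $[Z,T]=[\bar Z,T]=0$, which follow from $[X,Y]=-4T$ and the centrality of $T$, together with the identity $\overline{Zu}=\bar Z u$ that holds for real-valued $u$.

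For the forward implication, suppose $u$ is CR-pluriharmonic, so near each point there is a real $v$ with $\bar Z w=0$ for $w=u+iv$; conjugating gives $Z\bar w=0$, and $u=\tfrac12(w+\bar w)$. Then $Z^2\bar Z u=\tfrac12(Z^2\bar Z w+Z^2\bar Z\bar w)$. The first term vanishes since $\bar Z w=0$. For the second I would compute, using $Z\bar w=0$, that $Z\bar Z\bar w=\bar Z Z\bar w+[Z,\bar Z]\bar w=-2iT\bar w$, and then $Z^2\bar Z\bar w=Z(-2iT\bar w)=-2i\,TZ\bar w=0$ because $T$ commutes with $Z$ and $Z\bar w=0$. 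Hence $Z^2\bar Z u=0$.

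For the converse I would construct the conjugate function. Writing $w=u+iv$, the requirement $\bar Z w=0$ together with $Z\bar w=\overline{\bar Z w}=0$ and $u=\tfrac12(w+\bar w)$ forces $Zw=2Zu$, so the candidate for $dw$ is the $\C$-valued one-form $\eta:=2(Zu)\,dz+\beta\,\theta$, where the coefficient $\beta=Tw$ is to be determined. Imposing $d\eta=0$ and expanding in the basis $\{dz\wedge d\bar z,\ dz\wedge\theta,\ d\bar z\wedge\theta\}$ with $d\theta=2i\,dz\wedge d\bar z$ produces three scalar conditions: the $dz\wedge d\bar z$ coefficient fixes $\beta=-i\,\bar Z Z u$, while the other two read $\bar Z\beta=0$ and $Z\beta=2TZu$. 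The first reduces to $\bar Z^2 Z u=0$, which is the complex conjugate of the hypothesis $Z^2\bar Z u=0$ (as $u$ is real). The second is the substantive point: inserting $\beta$ and commuting $\bar Z$ past $Z^2$ via the operator identity $\bar Z Z^2=Z^2\bar Z+4iTZ$ turns it into $\bar Z Z^2 u=4iTZu$, which is exactly equivalent to $Z^2\bar Z u=0$. Thus $\eta$ is closed.

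Finally, on a simply connected neighborhood (for instance a Kor\'anyi ball) the closed form $\eta$ is exact, $\eta=d\tilde w$, yielding a $\C$-valued $\tilde w$ with $\bar Z\tilde w=0$, $Z\tilde w=2Zu$ and $T\tilde w=\beta$. A short verification that $Z$, $\bar Z$ and $T$ applied to $\tilde w+\overline{\tilde w}$ coincide with those applied to $2u$ (the $T$-step uses $i[Z,\bar Z]u=2Tu$) shows $\tilde w+\overline{\tilde w}=2u+c$ for a real constant $c$; setting $w:=\tilde w-c/2$ gives $w=u+iv$ with $v$ real and $\bar Z w=0$, so $u$ is CR-pluriharmonic. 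I expect the main obstacle to be the converse direction, specifically pinning down the correct closed one-form $\eta$ and the accompanying bookkeeping of commutators; the one genuinely nontrivial computation is the identity $\bar Z Z^2 u=4iTZu$, i.e.\ the equivalence of the condition $Z\beta=2TZu$ with the hypothesis $Z^2\bar Z u=0$.
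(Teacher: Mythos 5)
Your argument is correct. Note first that the paper does not prove this statement at all: it is imported verbatim as Lemma~5.2 of Dragomir--Tomassini \cite{DragTomCR}, where it is established in the broader framework of nondegenerate CR manifolds (and where the three-dimensional case, relevant here, is precisely the one requiring the third-order condition). What you have supplied is a self-contained, elementary proof specialized to $\Hei$, and every step checks out against the paper's conventions: with $Z=\tfrac12(X-iY)$ and $[X,Y]=-4T$ one indeed has $[Z,\bar Z]=-2iT$ and $d\theta=2i\,dz\wedge d\bar z$, so your expansion of $d\eta$ in the coframe dual to $\{Z,\bar Z,T\}$ gives exactly the three conditions you list; the coefficient $\beta=-i\,\bar Z Z u$, the identity $\bar Z Z^2=Z^2\bar Z+4iTZ$, and the resulting equivalence of $Z\beta=2TZu$ with $Z^2\bar Z u=0$ are all right, and the condition $\bar Z\beta=0$ is the conjugate equation $\bar Z^2 Zu=\overline{Z^2\bar Z u}=0$, automatic for real $u$. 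The forward direction via $Z^2\bar Z\bar w=Z(-2iT\bar w)=-2iTZ\bar w=0$ is clean, and the final normalization $\tilde w+\overline{\tilde w}=2u+c$ (using that a function annihilated by $Z$ and $\bar Z$ is also annihilated by $T=\tfrac{i}{2}[Z,\bar Z]$, hence locally constant) closes the loop. The only point worth making explicit is that the Poincar\'e lemma is applied on a simply connected neighborhood, which is harmless since CR-pluriharmonicity is by definition a local property. What your route buys over the citation is transparency: the reader sees that \eqref{pluricond} is literally the integrability condition for the closed one-form $2(Zu)\,dz-i(\bar Z Zu)\,\theta$ whose primitive is the sought CR holomorphic completion of $u$.
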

%

\begin{cor}\label{lem53}
 If $f \in SU(1,2)$  then $\phi_f=\frac{1}{2} \ln \lambda_f $ is CR-pluriharmonic. Furthermore, this fact coupled with conformal rigidity, implies that the CR-pluriharmonicity of $\phi_f$ is a vacuous condition in the proposed CR-definition of a M\"obius transformation.
\end{cor}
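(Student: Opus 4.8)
The plan is to reduce the first assertion to the single non-trivial generator of $SU(1,2)$, namely the inversion. Since the defining relation \eqref{contact0} gives $\lambda_{g\circ h}=(\lambda_g\circ h)\,\lambda_h$, the conformal factors satisfy the additive cocycle $\phi_{g\circ h}=\phi_g\circ h+\phi_h$, so $\phi_f$ is built additively from the factors appearing in the decomposition \eqref{types12}. Now $\lambda_{\tau_p}=1$, $\lambda_{r_\varphi}=1$ and $\lambda_{\delta_r}=r^2$ are all constant (translations and rotations are isometries, and the dilation factor is immediate), hence $\phi_{\tau_p},\phi_{r_\varphi},\phi_{\delta_r}$ are constants and the only factor producing a non-constant contribution is the inversion $\iota$. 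Thus for a type~1 map $\phi_f$ is constant and there is nothing to prove, while for a type~2 map $\phi_f=\phi_\iota\circ g+\mathrm{const}$ with $g=r_\varphi\circ\delta_r\circ\tau_q$ conformal. Since constants are CR-pluriharmonic and CR-pluriharmonicity is preserved under sums (real parts of CR holomorphic functions form a real vector space), it suffices to show (a) $\phi_\iota$ is CR-pluriharmonic, and (b) CR-pluriharmonicity is stable under precomposition by a conformal map.

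For (b) I would argue straight from the definition rather than from \eqref{pluricond}. If $u=\mathrm{Re}\,H$ with $\bar Z H=0$ on some neighbourhood and $g=(G,g_3)$ is conformal, then applying the chain rule \eqref{ComplexChainRule} with the horizontal field $V=\bar Z$ gives
\[
\bar Z(H\circ g)=(ZH\circ g)\,\bar Z G+(\bar Z H\circ g)\,\bar Z\bar G+(TH\circ g)\,g^*\theta(\bar Z).
\]
The third term vanishes because $g$ is contact and $\theta(\bar Z)=0$, the second term vanishes because $\bar Z H=0$, and the first term vanishes because $\bar Z G=0$ by Lemma~\ref{confZbar}. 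Hence $H\circ g$ is CR holomorphic and $u\circ g=\mathrm{Re}(H\circ g)$ is CR-pluriharmonic (its imaginary part is the required local conjugate).

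For (a) a direct computation of the contact factor gives $\lambda_\iota=(|z|^4+t^2)^{-1}$ (up to a constant, which only shifts $\phi_\iota$ by a constant), so that
\[
\phi_\iota=-\tfrac12\ln\big((|z|^2-it)(|z|^2+it)\big)=-\mathrm{Re}\,\ln(|z|^2-it),
\]
where I used $\overline{|z|^2-it}=|z|^2+it$. Since $\bar Z(|z|^2)=z$ and $\bar Z(-it)=-z$, the function $|z|^2-it$ satisfies $\bar Z(|z|^2-it)=0$, so $-\ln(|z|^2-it)$ is CR holomorphic away from the origin and $\phi_\iota$ is its real part. Combining (a), (b) and the reduction above shows that $\phi_f$ is CR-pluriharmonic for every $f\in SU(1,2)$; equivalently $Z^2\bar Z\phi_f=0$ by \eqref{pluricond}.

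The point I expect to be the main obstacle is explaining why the argument cannot be short-circuited. For conformal $f$ one has $\lambda_f=|ZF|^2$ and hence $\phi_f=\mathrm{Re}\,\ln(ZF)$, and it is tempting to declare $\phi_f$ CR-pluriharmonic at once, by analogy with the planar case. However $\bar Z(ZF)=2i\,TF$ (using $\bar Z F=0$ and $[\bar Z,Z]=2iT$) need not vanish, so $\ln(ZF)$ is \emph{not} CR holomorphic and this naive route fails; the correct CR holomorphic primitive only emerges after passing through the inversion, which is why the detour above is necessary. Finally, the vacuity claim is a consequence of conformal rigidity: by the Liouville theorem quoted before Lemma~\ref{conftypes}, every contact map that is CR holomorphic in its complex coordinate is (a restriction of) an element of $SU(1,2)$, so the proposed definition of a M\"obius transformation applies only to such maps; since $\phi_f$ is automatically CR-pluriharmonic for all of them, imposing CR-pluriharmonicity as an extra hypothesis excludes nothing and is therefore vacuous.
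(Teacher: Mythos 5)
Your proof is correct and follows the same skeleton as the paper's: decompose via \eqref{types12}, observe that translations, rotations and dilations contribute only constants to $\phi_f$, and reduce to showing that $\phi_\iota\circ g$ is CR-pluriharmonic for conformal $g$. Where you diverge is in how the two remaining facts are established, and your route is genuinely different in both places. For the stability under precomposition, the paper works with the characterization \eqref{pluricond} and uses the chain-rule identity $Z^2\bar Z(\phi_\iota\circ g)=\bigl(Z^2\bar Z\phi_\iota\bigr)\circ g\;ZG\,|ZG|^2$ for conformal $g$, i.e.\ a covariance property of the third-order operator $Z^2\bar Z$; you instead argue at first order directly from the definition, showing via \eqref{ComplexChainRule} and Lemma~\ref{confZbar} that $\bar Z H=0$ implies $\bar Z(H\circ g)=0$, so that CR holomorphy itself is preserved. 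For the inversion, the paper asserts $Z^2\bar Z\phi_\iota=0$ ``by direct calculation,'' whereas you exhibit the explicit CR holomorphic primitive $-\ln(|z|^2-it)$ with $\phi_\iota=\Re\bigl(-\ln(|z|^2-it)\bigr)$, using $\bar Z(|z|^2)=z$ and $\bar Z(-it)=-z$; both computations check out, and yours has the advantage of producing the local conjugate function demanded by the definition rather than only verifying the PDE. Your closing remark --- that $\phi_f=\Re\ln(ZF)$ does \emph{not} short-circuit the argument because $\bar Z(ZF)=2iTF\neq 0$ in general --- is a worthwhile observation that the paper does not make, and it explains why the detour through the inversion is needed. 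The treatment of the vacuity claim via the Liouville theorem matches the paper's discussion.
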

\begin{proof}
 Recall the decomposition of $SU(1,2)$ maps in~\eqref{types12}. If $f$ is type 1 then $\lambda_f=r^2$ where $r$ is the dilation factor and so $\phi_f$ is trivially CR-pluriharmonic. If $f$ is type 2 then we have $f=\tau_p \circ \iota \circ g$ where $g=r_\theta  \circ \delta_r \circ \tau_q$ is type 1 and
 $$
 \phi_f = \phi_\iota \circ g+ \phi_g.
 $$
 It follows that
  $$
  Z^2 \bar Z \phi_f=Z^2 \bar Z (\phi_\iota \circ g )=Z^2 \bar Z \phi_\iota \circ g \,  ZG |ZG|^2.
  $$
  Moreover, $Z^2 \bar Z \phi_\iota=0$ by direct calculation.
\end{proof}

Next we look at the CR-Schwarzian in coordinates.

\begin{lem}[CR-Schwarzian, cf.~\cite{DuongNgocSon}]
 Let $f:\Om \subseteq \Hei \to \Hei$ be a positively oriented contact map and let $\phi=\frac{1}{2} \ln \lambda_f$, then it follows that
 \begin{align}
 B_\theta ( \phi) &= 2(Z^2\phi - 2 (Z\phi)^2  ) dz \otimes dz  +  2( \overline{Z^2  \phi -2( Z \phi)^2} ) d \bar z \otimes d \bar z. \label{Sf-CR}
 \end{align}
\end{lem}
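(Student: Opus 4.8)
The plan is to evaluate the three constituents of $B_\theta(\phi)$ in \eqref{CRSchwarz} directly in the left invariant frame $\{Z,\bar Z,T\}$ with dual coframe $\{dz,d\bar z,\theta\}$, and then read off the coefficients of the monomials $dz\otimes dz$, $d\bar z\otimes d\bar z$, $dz\otimes d\bar z$ and $d\bar z\otimes dz$. Since $B_\theta(\phi)$ is a tensor on $T^{1,0}\Hei\oplus T^{0,1}\Hei$, these four components are the only relevant ones, so it suffices to show the two holomorphic (resp.\ antiholomorphic) coefficients give $2(Z^2\phi-2(Z\phi)^2)$ (resp.\ its conjugate) and that the mixed coefficients vanish.

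The geometric input that shortens the computation is that the Tanaka--Webster connection of $(\Hei,T^{1,0}\Hei,\theta)$ is flat in this frame. Indeed, taking $\theta^1=dz$ the structure equation reads $0=d(dz)=dz\wedge\omega^1{}_1+\theta\wedge\tau^1$ with torsion $\tau^1=A\,d\bar z$; comparing the independent two-forms $dz\wedge d\bar z$, $dz\wedge\theta$, $\theta\wedge d\bar z$ kills all components of $\omega^1{}_1$ except a possible $a\,dz$ term, which is in turn killed by compatibility of $\nabla$ with the Levi metric (skew-Hermiticity of $\omega^1{}_1$). Hence $\nabla Z=\nabla\bar Z=\nabla T=0$ (cf.\ \cite{DragTomCR}), and the second covariant derivative carries no Christoffel correction: $\nabla^2\phi(Z,Z)=Z^2\phi$, $\nabla^2\phi(\bar Z,\bar Z)=\bar Z^2\phi$, and $\nabla^2\phi(Z,\bar Z)=Z\bar Z\phi$, $\nabla^2\phi(\bar Z,Z)=\bar Z Z\phi$. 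Taking twice the symmetrisation, the $dz\otimes dz$ coefficient of $\mathrm{Sym}\,\nabla^2\phi$ is $2Z^2\phi$, the $d\bar z\otimes d\bar z$ coefficient is $2\bar Z^2\phi$, and each mixed coefficient is $(Z\bar Z+\bar Z Z)\phi$.

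Next I would record the remaining two terms. From $d\phi=(Z\phi)\,dz+(\bar Z\phi)\,d\bar z+(T\phi)\,\theta$ one reads $\partial_\flat\phi=(Z\phi)\,dz$ and $\bar\partial_\flat\phi=(\bar Z\phi)\,d\bar z$, so $-4(\partial_\flat\phi\otimes\partial_\flat\phi+\bar\partial_\flat\phi\otimes\bar\partial_\flat\phi)$ contributes $-4(Z\phi)^2$ to $dz\otimes dz$ and $-4(\bar Z\phi)^2$ to $d\bar z\otimes d\bar z$, and nothing to the mixed slots. The Levi form $L_\theta$, computed from $d\theta=2i\,dz\wedge d\bar z$ and $L_\theta(\cdot,\cdot)=d\theta(\cdot,J\cdot)$, is a multiple of $dz\odot d\bar z$ and so has no $dz\otimes dz$ or $d\bar z\otimes d\bar z$ component; hence $-(\Delta_\flat\phi)L_\theta$ touches only the mixed slots. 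Adding the two nonzero contributions in the holomorphic slot gives $2Z^2\phi-4(Z\phi)^2=2(Z^2\phi-2(Z\phi)^2)$, and because $\phi$ is real-valued the antiholomorphic slot is the complex conjugate $2\,\overline{Z^2\phi-2(Z\phi)^2}$, which is precisely \eqref{Sf-CR}.

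It remains to verify that the mixed coefficients cancel, which is where the normalisations must be pinned down and is the main bookkeeping obstacle. The $dz\otimes d\bar z$ coefficient from $\mathrm{Sym}\,\nabla^2\phi$ is $(Z\bar Z+\bar Z Z)\phi$, while $-(\Delta_\flat\phi)L_\theta$ contributes $-(\Delta_\flat\phi)\,L_\theta(Z,\bar Z)$; with the normalisations $\Delta_\flat\phi=(Z\bar Z+\bar Z Z)\phi$ and $L_\theta(Z,\bar Z)=1$ these cancel identically, and the same holds in the $d\bar z\otimes dz$ slot. This cancellation is exactly the trace removal built into the definition \eqref{CRSchwarz}, which renders $B_\theta(\phi)$ primitive with respect to the Levi form. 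The only genuine care needed is to fix, consistently with \cite{DuongNgocSon}, the factor of $2$ in $\mathrm{Sym}\,\nabla^2$ and the normalisations of $\Delta_\flat$ and $L_\theta$; once these are matched, the mixed part vanishes and the stated two-term expression results.
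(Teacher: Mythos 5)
Your proposal is correct and follows essentially the same route as the paper's proof: compute $\nabla^2\phi$ in the frame $\{Z,\bar Z\}$ using flatness of the Tanaka--Webster connection, read off $\partial_\flat\phi=(Z\phi)\,dz$ and $\bar\partial_\flat\phi=(\bar Z\phi)\,d\bar z$, and observe that the sub-Laplacian term cancels the mixed components so that only the $dz\otimes dz$ and $d\bar z\otimes d\bar z$ coefficients survive. The only difference is cosmetic: you derive the flatness from the structure equations and make the normalisation bookkeeping explicit, whereas the paper cites \cite{DragTomCR} for flatness and states the mixed-term cancellation directly.
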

\begin{proof} For $V,W \in T^{1,0} \Hei \oplus T^{0,1}  \Hei $, we have by definition, that $$\nabla^2 \phi(V, W )=VW\phi -d \phi(\nabla_V W) $$ and it follows that
$$ \nabla^2 \phi = (Z^2\phi) dz \otimes dz + (Z \overline{Z} \phi) dz \otimes d \bar z + ( \overline{Z} Z \phi) d \bar z \otimes dz+ ( \overline{Z}^2  \phi) d \bar z \otimes d \bar z$$ since $\nabla$ is flat, see \cite{DragTomCR} page 34.
Furthermore, we also have that $\partial_\flat \phi  = (Z\phi) dz$,  $\bar \partial_\flat \phi  = (\bar Z \phi) d \bar z$, and hence, with the notation of~\cite{DuongNgocSon}, the sub-Laplacian of $\phi$ reads
$$
- ( \Delta_\flat \phi) d \theta  (V, J W)=-(Z \overline{Z} \phi + \overline{Z} Z \phi ) (dz \otimes d \bar z+d \bar z \otimes dz)(V,W).
$$
Inserting these expressions into \eqref{CRSchwarz} results in \eqref{Sf-CR}.

\end{proof}

%


	The vanishing of the CR Schwarzian tensor is therefore equivalent to $Z^2\phi - 2 (Z\phi)^2=0$. Since
	 $$
	 \phi=\frac{1}{2} \ln \lambda_f, \quad  Z \phi =\frac{1}{2}\lambda_f^{-1} Z \lambda_f , \quad {\rm and} \quad  Z^2 \phi =\frac{1}{2} \lambda_f^{-2} \left ( \lambda_f Z^2 \lambda_f - (Z \lambda_f)^2 \right ),
	 $$
	we also have that
	\begin{align}
	Z^2 \phi  -2(Z\phi)^2 &= \frac{ 1}{2}\lambda_f^{-2} \left (  \lambda_f Z^2 \lambda_f  -  2(Z \lambda_f)^2 \right )=\frac{1}{2} \lambda_{f } Z^2(\lambda_f^{-1}). \label{lambdafDEF}
	\end{align}

Based on this discussion we propose the following definition serving as an analogue of \eqref{CplainDef}.

\begin{defn}\label{def-cr-schw}
 Let $f:\Om\to\Hei$ be a positively oriented contact mapping defined on an open set $\Om\subset \Hei$. The \emph{CR Schwarzian derivative} of $f$ is defined as follows:
 $$
 S_{CR}(f)=\frac{1}{2} \lambda_{f } Z^2(\lambda_f^{-1}).
 $$
\end{defn}	
Observe the following properties of $S_{CR}$:
\begin{itemize}
\item[(1)] If $f$ is conformal, then $S_{CR}(f)=0$ by case 8) in \eqref{confvfieldlist}. Furthermore,  the CR Schwarzian tensor vanishes as well.
\item[(2)] The condition $S_{CR}(f)=0$ is weak in the sense that the set of solutions contains an infinite dimensional vector space. In particular, the space of contact vector fields for which the flow $f_s$ satisfies $S_{CR}(f_s)=0$ is an infinite dimensional vector space as illustrated by the following examples.
 \end{itemize}

\begin{ex}\label{ex1}
 If $V=Yv_0 \, X - Xv_0\, Y -4 v_0 \, T$ and $v_0(x,y,t)=h(x)$ where $h$ is $C^2$, then the flow of $V$ has the form
\begin{align}
f_s(z,t) = \big (  z -i s h'(x),\, t +s( 2 x h'(x)- 4 h(x) ) \, \big ) \label{FlowEx}
\end{align}
and $\lambda_{f_s} =1$. Hence the CR Schwarzian derivative of $f_s$ vanishes for all $s\geq 0$.
\end{ex}

\begin{ex}\label{ex1SL2R}
Let $f$ be the linear action of  $\alpha \in SL(2,\R)$ , i.e.,
$$
f(z,t)=(\alpha(z),t)= (ax+by+i(cx+dy),t), \quad ad-bc=1,\,\, a,b,c,d\in \R, \, \, z=x+iy.
$$
The map $f$ is always a contact mapping, satisfies $\lambda_f=1$ and $S_{CR}(f)=0$, but is conformal only when $\alpha$ is a rotation.
\end{ex}

We discuss further properties of the CR-Schwarzian derivative and show that even if the contact mappings $f$ and $g$ satisfy $S_{CR}(f)=S_{CR}(g)=0$, it need not imply that $S_{CR}(f \circ g)=0$, see Claim 3 below.

First, we need an auxiliary result.

 \begin{observ}[Chain rule for the CR-Schwarzian] \label{obs-CR-Schw-chainr}
 If $f=(F=f_1+i f_2,f_3)$ and $g=(G=g_1+i g_2,g_3)$ are contact diffeomorphism between domains in $\Hei$, then the chain rule for $S_{CR}$ reads as follows:
 \begin{align} S_{CR}(f\circ g) & =  S_{CR}(f) \circ g \, \, (ZG)^2  \, + \, \overline{S_{CR}(f)} \circ g \, \,  (Z \bar G)^2  \, + \,S_{CR}(g) \nonumber \\
 & \quad + \Big (   \lambda_{f }  ( \bar ZZ\lambda_{f }  +  Z\bar Z \lambda_{f }  )  - 4 (Z\lambda_{f } )  (\bar Z \lambda_{f } )    \Big ) \circ g \,  \, \frac{ZG Z \bar G }{2 \lambda_f^2 \circ g }\nonumber \\
 & \quad + \frac{1}{2 \lambda_g }  \Big (   Z^2 G \lambda_g  -  2ZG Z\lambda_g  \Big )  Z (\ln( \lambda_f)) \circ g \nonumber \\
 & \quad +  \frac{1}{2 \lambda_g } \Big (  Z^2 \bar G   \lambda_g -  2 Z \bar G    Z\lambda_g     \Big  )   \bar Z (\ln( \lambda_f)) \circ g. \label{CR-Schw-chainr}
 \end{align}
 \end{observ}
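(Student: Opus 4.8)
The plan is to reduce everything to the representation $S_{CR}(f)=Z^2\phi_f-2(Z\phi_f)^2$ with $\phi_f=\frac{1}{2}\ln\lambda_f$ established in \eqref{lambdafDEF}, and to exploit the multiplicativity of the conformal factor under composition. First I would record that functoriality of the pullback gives $(f\circ g)^*\theta=g^*(f^*\theta)=(\lambda_f\circ g)\,\lambda_g\,\theta$, whence $\lambda_{f\circ g}=(\lambda_f\circ g)\,\lambda_g$ and therefore, with $\phi_f=\frac{1}{2}\ln\lambda_f$ and $\phi_g=\frac{1}{2}\ln\lambda_g$,
\[
\phi_{f\circ g}=\phi_f\circ g+\phi_g.
\]
Consequently $S_{CR}(f\circ g)=Z^2(\phi_f\circ g+\phi_g)-2\big(Z(\phi_f\circ g+\phi_g)\big)^2$, so the whole computation reduces to differentiating the composite $\phi_f\circ g$ twice along $Z$.

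The computational engine is the complex chain rule \eqref{ComplexChainRule}: since $g$ is contact and $Z$ is horizontal, the $g^*\theta$ term drops and
\[
Z(\phi_f\circ g)=(Z\phi_f\circ g)\,ZG+(\bar Z\phi_f\circ g)\,Z\bar G.
\]
Applying $Z$ once more, and reusing \eqref{ComplexChainRule} on the functions $Z\phi_f$ and $\bar Z\phi_f$, yields $Z^2(\phi_f\circ g)$ as a sum of five term-types: $(ZG)^2$, $(Z\bar G)^2$, $ZG\,Z\bar G$, $Z^2G$ and $Z^2\bar G$, with coefficients that are second-order $Z,\bar Z$-derivatives of $\phi_f$ composed with $g$. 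I would then write $S_{CR}(f\circ g)=\big[Z^2(\phi_f\circ g)-2(Z(\phi_f\circ g))^2\big]+\big[Z^2\phi_g-2(Z\phi_g)^2\big]-4\,Z(\phi_f\circ g)\,Z\phi_g$ and sort by type.

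Sorting is where the identities do their work. The $(ZG)^2$ coefficient collapses to $Z^2\phi_f-2(Z\phi_f)^2=S_{CR}(f)$ evaluated at $g$; the $(Z\bar G)^2$ coefficient is $\bar Z^2\phi_f-2(\bar Z\phi_f)^2$, which equals $\overline{S_{CR}(f)}$ precisely because $\phi_f$ is real, so that $\overline{Z^2\phi_f}=\bar Z^2\phi_f$ and $\overline{(Z\phi_f)^2}=(\bar Z\phi_f)^2$; the middle bracket is exactly $S_{CR}(g)$. The last two lines of \eqref{CR-Schw-chainr} then require translating from $\phi_f,\phi_g$ back to $\lambda_f,\lambda_g$. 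For the mixed term I would compute the coefficient $\bar ZZ\phi_f+Z\bar Z\phi_f-4Z\phi_f\,\bar Z\phi_f$ and, substituting $Z\phi_f=\frac{1}{2}\lambda_f^{-1}Z\lambda_f$ and its conjugate, rewrite it as $\frac{1}{2\lambda_f^2}\big(\lambda_f(\bar ZZ\lambda_f+Z\bar Z\lambda_f)-4Z\lambda_f\,\bar Z\lambda_f\big)$, matching the third line after composing with $g$. For the $Z^2G$, $Z^2\bar G$ contributions together with the cross term $-4Z(\phi_f\circ g)\,Z\phi_g$, I would use $Z\ln\lambda_f=2Z\phi_f$ and $\lambda_g^{-1}Z\lambda_g=2Z\phi_g$ to recognise the two displayed expressions, the crucial cancellation coming from the already-used identity $ZG\,(Z\phi_f\circ g)+Z\bar G\,(\bar Z\phi_f\circ g)=Z(\phi_f\circ g)$.

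The only real obstacle is disciplined bookkeeping: keeping the three additive pieces of $\phi_{f\circ g}$ separate, correctly generating the cross term $-4Z(\phi_f\circ g)\,Z\phi_g$ when expanding the square, and switching cleanly between the $\phi$-notation and the $\lambda$-notation so the mixed-derivative coefficient lands in exactly the normalised form on the third line of \eqref{CR-Schw-chainr}. No conceptual difficulty arises beyond the contact-invariance of \eqref{ComplexChainRule} and the reality of $\phi_f$; the danger is algebraic error, not a missing idea.
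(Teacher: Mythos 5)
Your proposal is correct and follows essentially the same route as the paper, whose proof is simply the remark that the result ``follows by direct calculation using the fact that $\lambda_{f\circ g}=\lambda_f\circ g\,\,\lambda_g$''; you have filled in exactly that calculation, via $\phi_{f\circ g}=\phi_f\circ g+\phi_g$, the contact chain rule \eqref{ComplexChainRule}, and the representation \eqref{lambdafDEF}. The term-by-term sorting you describe (including the reality of $\phi_f$ giving the $\overline{S_{CR}(f)}$ coefficient and the normalisation of the mixed $ZG\,Z\bar G$ coefficient) checks out against \eqref{CR-Schw-chainr}.
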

\begin{proof} The result follows by direct calculation using the fact that  $ \lambda_{f\circ g} = \lambda_f \circ g \, \, \lambda_g$.
 \end{proof}

 Let us analyze some consequences of this chain rule.

 {\bf Claim 1.} \emph{If $f$ is contact and $g$ is conformal, then
 \begin{align}
  S_{CR}(f\circ g) & =  S_{CR}(f) \circ g \, \, (ZG)^2.  \label{rightcocycle}
 \end{align}}

 \emph{Proof:} If $g$ is conformal, then $\bar Z G$, $Z \bar G$ and $S_{CR}(g)$ are all zero, hence~\eqref{CR-Schw-chainr} reads as
 \begin{align*}S_{CR}(f\circ g) & =  S_{CR}(f) \circ g \, \, (ZG)^2   + \frac{1}{2 \lambda_g }  \Big (   Z^2 G \lambda_g  -  2ZG Z\lambda_g  \Big )  Z (\ln( \lambda_f)) \circ g.
 \end{align*}
 Furthermore, conformality of $g$ implies that $ Z^2 G \lambda_g  -  2ZG Z\lambda_g  =0$.  Indeed, cases 5), 6) and 8)  given at \eqref{confvfieldlist}, show that $Z^2 (\lambda_g^{-1}) =0$ and $ Z^2(\lambda_g^{-1}G)=0$. Calculating directly we get
 \begin{align*}
 Z(\lambda_g^{-1}G) = Z(\lambda_g^{-1}) G + \lambda_g^{-1} ZG \quad \hbox{ and }  \quad Z^2(\lambda_g^{-1}G) =  2Z(\lambda_g^{-1}) ZG + \lambda_g^{-1} Z^2G=  -2\lambda_g^{-2} Z\lambda_g ZG + \lambda_g^{-1} Z^2G.
\end{align*}
 Multiplying the previous equation by $\lambda_g^{2}$ shows that  $ Z^2 G \lambda_g  -  2ZG Z\lambda_g  =0$ and we get the right hand cocycle condition for $S_{CR}(f)$ as in the assertion of the claim.

 {\bf Claim 2. } \emph{If $f$ is conformal of type 1 (free of inversion, see Lemma~\ref{conftypes}) and $g$ is contact, then
 $$
 S_{CR}(f \circ g)= S_{CR}(g).
 $$  }
 \emph{Proof:} For a conformal $f$ and a contact $g$ it follows from~\eqref{CR-Schw-chainr} that
 \begin{align*} S_{CR}(f\circ g) & = S_{CR}(g) + \Big (   \lambda_{f }  ( \bar ZZ\lambda_{f }  +  Z\bar Z \lambda_{f }  )  - 4 (Z\lambda_{f } )  (\bar Z \lambda_{f } )    \Big ) \circ g \, \, \frac{ZG Z \bar G }{2 \lambda_f^2 \circ g }\\
 & \quad + \frac{1}{2 \lambda_g }  \Big (   Z^2 G \lambda_g  - 2ZG Z\lambda_g  \Big )  Z (\ln( \lambda_f)) \circ g\\
 & \quad +  \frac{1}{2 \lambda_g } \Big (  Z^2 \bar G   \lambda_g -  2 Z \bar G    Z\lambda_g    \Big  )   \bar Z (\ln( \lambda_f)) \circ g.
 \end{align*}
 Since $f$ is assumed to be of type 1 (free of inversion), then $\lambda_f=r^2$ where $r$ is the dilation factor. In this case the right hand side above reduces to $S_{CR}(g)$ and we obtain the claim.

 {\bf Claim 3. } \emph{If $f$ is inversion and $g$ is the linear action of $ \alpha \in SL(2,\R)$, then
 \[
 S_{CR}(f\circ g)=-6 \frac{ |G|^2 }{ \Vert  g \Vert_{\Hei}^{4} }    \,  ZG Z \bar G .
 \]
 In particular, $S_{CR}(f)=S_{CR}(g)=0$ but $S_{CR}(f \circ g)\not \equiv 0$.
 }

 \emph{Proof:} If $f$ is the inversion and $g$ is the linear action of $ \alpha \in SL(2,\R)$, then $\lambda_g=1$, $S_{CR}(g)=0$, $Z^2 G=Z^2 \bar G=0$ and by~\eqref{CR-Schw-chainr} and Lemma~\ref{lem53} we get
 \begin{align*}
  S_{CR}(f\circ g) & = \Big (   \lambda_{f }  ( \bar ZZ\lambda_{f }  +  Z\bar Z \lambda_{f }  )  - 4 (Z\lambda_{f } )  (\bar Z \lambda_{f } )    \Big ) \circ g \, \, \frac{ZG Z \bar G }{2 \lambda_f^2 \circ g } = -6 \frac{ |G|^2 }{ \Vert  g \Vert_{\Hei}^{4} }    \,  ZG Z \bar G .
 \end{align*}

\section{A classical type Schwarzian}

We now turn our attention to the classical type definition of the Schwarzian, denoted by $S_{CL}$, which although may look naive and only based on aesthetics, does in fact have some properties that are Schwarzian like. Moreover, the classical type Schwarzian differs from the CR Schwarzian derivative. Among other results, below we address the following properties of $S_{CL}$:
\begin{itemize}
 \item[(1)] $S_{CL}$ vanishes on $SU(1,2)$ and satisfies cocycle conditions, see Theorem~\ref{thm61} and Lemma~\ref{mycocycle};
 \item[(2)] in contrast with the CR Schwarzian derivative, we can construct contact flows which are zeroes of the CR Schwarzian derivative but nonzero for the classical type Schwarzian, see Example~\ref{ex3};
 \item[(3)] $S_{CL}$ is conformally invariant, see Corollary~\ref{cor63};
 \item[(4)]  the kernel for $S_{CL}$ contains not only the conformal mappings, see Proposition~\ref{prop66}.
\end{itemize}

\begin{defn}[Classical type Schwarzian]\label{defn-cl-Schw}
  Let  $\Omega \subseteq \mathbb{H}^1$ be an open subset, and $f: \Omega \to \mathbb{H}^1$ be a mapping satisfying the contact equations~\eqref{contact-eqs}. We write $f=(f_1+if_2,f_3)=(F,f_3)$ and define \emph{the classical type Schwarzian derivative of $f$} as follows
\begin{align}
S_{CL}(f):= Z \left (\frac{ Z^2 F}{ZF} \right )-\frac{1}{2} \left(\frac{Z^2F}{ZF}\right)^2= \frac{ Z^3 F}{ZF}-\frac{3}{2} \left(\frac{Z^2F}{ZF}\right)^2 \label{HeisSchwartz}
\end{align}
at points where $ZF\not=0$.
\end{defn}

The above definition is formulated without assuming that $f$ is a local diffeomorphism in order to give a slightly wider perspective. However, in what follows we will assume that $f$ is a contact transformation, cf. Definition~\ref{def-contact}.

\begin{thm}\label{thm61}
If $f$ is a conformal map of an open set in $\Hei$, then $S_{CL}(f)=0$.
\end{thm}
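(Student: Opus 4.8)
The plan is to combine Liouville's theorem (Lemma~\ref{conftypes}), which presents every conformal map as a composition of five explicit generators, with a composition formula for $S_{CL}$. The conceptual engine is that, under composition with a conformal map, the operator $Z$ obeys exactly the chain and product rules of the ordinary complex derivative $d/dz$; once this is in hand, the classical proof that the Schwarzian annihilates M\"obius transformations transfers essentially verbatim. First I would record the key identity: if $g=(G,g_3)$ is conformal, then for \emph{every} smooth function $H$ one has
$$
Z(H\circ g)=(ZH\circ g)\,ZG.
$$
This follows from the complex chain rule~\eqref{ComplexChainRule} applied to $H$: the Reeb term $(TH\circ g)\,g^*\theta(Z)$ vanishes because $g$ is contact and $Z$ is horizontal, so $g^*\theta(Z)=\lambda_g\,\theta(Z)=0$, while the term $(\bar Z H\circ g)\,Z\bar G$ vanishes because conformality of $g$ gives $\bar Z G=0$ (Lemma~\ref{confZbar}), whence $Z\bar G=\overline{\bar Z G}=0$. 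Crucially $H$ need not be CR holomorphic: both obstructing terms are annihilated by the contact and conformality hypotheses on $g$ alone.

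Iterating this identity with $H=F$, then $H=ZF$, then $H=Z^2F$ reproduces the familiar relations
$$
Z(F\circ g)=(ZF\circ g)\,ZG,\qquad Z^2(F\circ g)=(Z^2F\circ g)(ZG)^2+(ZF\circ g)\,Z^2G,
$$
together with the corresponding third-order formula. Substituting these into the definition~\eqref{HeisSchwartz} of $S_{CL}$ and simplifying exactly as in the planar case, I would obtain the composition formula
$$
S_{CL}(f\circ g)=\big(S_{CL}(f)\circ g\big)(ZG)^2+S_{CL}(g),
$$
valid whenever $g$ is conformal (this is, independently, the content of Lemma~\ref{mycocycle}). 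In particular, if both $S_{CL}(f)=0$ and $S_{CL}(g)=0$ with $g$ conformal, then $S_{CL}(f\circ g)=0$, so it suffices to verify the vanishing on the generators.

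It then remains to check $S_{CL}=0$ on the generators of $SU(1,2)$ from Lemma~\ref{conftypes}. For a translation $\tau_p$, a dilation $\delta_r$, and a rotation $r_\varphi$ the complex coordinate $F$ is affine in $z$, so $ZF$ is constant and $Z^2F=Z^3F=0$, giving $S_{CL}=0$ at once. The only genuine computation is the inversion $\iota$: setting $u=|z|^2-it$, which is CR holomorphic ($\bar Z u=0$, hence $Z\bar u=0$) with $Zu=2\bar z$, one finds $F=-z/u$ and $ZF=\bar u/u^2$, and then $Z^2F/ZF=-4\bar z/u$ while $Z^3F/ZF=24\bar z^2/u^2$; these combine to give $S_{CL}(\iota)=0$. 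Finally, writing an arbitrary $g\in SU(1,2)$ via the decomposition~\eqref{types12} as a composition of these generators and building the product up from the right (each factor being conformal with vanishing $S_{CL}$), the composition formula yields $S_{CL}(f)=0$ for every conformal $f$.

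The main difficulty is conceptual rather than computational: isolating the precise roles of the two hypotheses on $g$, namely that the contact condition kills the Reeb contribution and conformality ($\bar Z G=0$, equivalently $Z\bar G=0$) kills the antiholomorphic contribution in~\eqref{ComplexChainRule}, so that $Z$ behaves like $d/dz$ and the classical Schwarzian identities persist. The inversion calculation, while the only one requiring any work, is short once one exploits that $u$ is CR holomorphic, so that $Z\bar u=0$ and $Z\bar z=0$ collapse the higher $Z$-derivatives.
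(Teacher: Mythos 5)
Your proof is correct, but it follows a genuinely different route from the paper's. The paper does not decompose $f$ into generators at all: it invokes the conformal vector field machinery of Section 4, namely that a conformal $f$ must push forward conformal vector fields to conformal vector fields, so that cases 5), 6) and 8) of \eqref{confvfieldlist} give $Z^2(\lambda_f^{-1})=0$ and $Z^2(\lambda_f^{-1}F)=0$; a short algebraic manipulation then yields $S_{CL}(f)=2\lambda_f Z^2(\lambda_f^{-1})=0$. That argument has the side benefit of exhibiting the identity $S_{CL}(f)=4\,S_{CR}(f)$ on conformal maps (compare \eqref{Sconfcont} with \eqref{lambdafDEF}), which is how the paper justifies that the classical type Schwarzian ``agrees'' with the CR one in the conformal class, and it never needs the explicit form of the inversion. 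Your argument instead combines the Liouville decomposition of Lemma~\ref{conftypes}/\eqref{types12} with the right cocycle condition (Lemma~\ref{mycocycle}, which you correctly re-derive from \eqref{ComplexChainRule} using only that $g^*\theta(Z)=0$ and $Z\bar G=\overline{\bar Z G}=0$) and a direct verification on generators; your inversion computation checks out ($ZF=\bar u/u^2$, $Z^2F/ZF=-4\bar z/u$, $Z^3F/ZF=24\bar z^2/u^2$, so the two terms cancel). This buys a self-contained, classical-style proof independent of Section 4, at the cost of the explicit generator computation and of losing the structural link to $S_{CR}$. One small point worth making explicit in your write-up: the induction over the factors of \eqref{types12} needs each partial composition $h_1\circ\cdots\circ h_{k-1}$ to be conformal (hence positively oriented contact) so that Lemma~\ref{mycocycle} applies, which is automatic here; and the orientation-reversing reflection of Lemma~\ref{conftypes} is irrelevant since $ZF\equiv 0$ there and $S_{CL}$ is only defined where $ZF\neq 0$ -- the same implicit restriction is present in the paper's proof.
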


\begin{proof}  From cases 5), 6) and 8) given at \eqref{confvfieldlist}, we have that $Z^2 (\lambda_f^{-1}) =0$ and $ Z^2(\lambda_f^{-1}F)=0$. Calculating directly we get
	\begin{align*}
	Z(\lambda_f^{-1}F)= Z(\lambda_f^{-1}) F + \lambda_f^{-1} ZF \,\hbox{ and }\,Z^2(\lambda_f^{-1}F) =  2Z(\lambda_f^{-1}) ZF + \lambda_f^{-1} Z^2F=0.
	 \end{align*}

	It then follows that
	\begin{align} \frac{Z^2F}{ZF}&=-2  \lambda_f Z(\lambda_f^{-1}) = 2  \lambda_f^{-1} Z\lambda_f \label{ZZsqDivZ0}\\
	Z \left ( \frac{Z^2F}{ZF} \right ) &= 2 Z( \lambda_f^{-1} Z\lambda_f)  = 2  \left (  -\lambda_f^{-2} (Z\lambda_f)^2  +   \lambda_f^{-1} Z^2\lambda_f \right ).  \label{ZZsqDivZ}
	\end{align}
	From \eqref{ZZsqDivZ0} and \eqref{ZZsqDivZ} we have
	\begin{align}
	Z \left ( \frac{Z^2F}{ZF} \right )-\frac{1}{2} \left( \frac{Z^2F}{ZF} \right )^2 &= 2  \left (  -\lambda_f^{-2} (Z\lambda_f)^2  +   \lambda_f^{-1} Z^2\lambda_f \right ) -2 \lambda_f^{-2} (Z\lambda_f)^2  \nonumber \\
	&=   2 \lambda_f^{-2} \left ( \lambda_f Z^2\lambda_f - 2   (Z\lambda_f)^2  \right ) \label{Sconfcont} \\
	&=  2 \lambda_f^{-2} \lambda_f^{3} Z^2(\lambda_f^{-1}) \nonumber \\
	&=  2 \lambda_f Z^2(\lambda_f^{-1})=0. \nonumber
	\end{align}
\end{proof}

The condition $S_{CL}(f)=0$ does not imply that $f$ is conformal, as is demonstrated by the following example.
\begin{ex}\label{ex3}
The contact flow at~\eqref{FlowEx}  in Example~\ref{ex1} does not satisfy $S_{CL}(f_s)=0$ for all $h$, as for these flows, it can be shown directly that the $S_{CL}(f_s)=0$  implies that $h(x)=ax^2+bx+c$. Indeed, by direct calculation, the contact flow generated by $V= v_1{X} +v_2 Y-4v_0{T}$ satisfies
$$ \frac{d}{ds} S_{CL}(f_s)|_{s=0} =-2i Z^3 \bar Z v_0$$ and so $Z^3 \bar Z v_0=0$
is a necessary condition for $S_{CL}(f_s)=0$. Hence, if as in Example~\ref{ex1}, $v_0(x,y,t)=h(x)$, then $h(x)$ must be a polynomial of degree at most $3$ and in this case we can explicitly compute the flow $f_s$ and $S_{CL}(f_s)$. The condition $S_{CL}(f_s)=0$ can then be seen to require that the order of $h(x)$ be at most $2$.  For $h(x)=ax^2+bx+c$, the flow is given by
\begin{align*}
f_s(z,t) &= \big ( \, x+i(y-s(2ax+b))  \, , \, t-2s( bx + 2c) \, \big ).
\end{align*} On the other hand, if $h(x)=e^x$, then direct calculation with aid of MAPLE gives
$$
S_{CL}(f_s)=\frac{1}{8}  \frac{ (e^x s)^2 (272+104 e^{2x} s^2+17 e^{4x} s^4)}{(16+8 e^{2x} s^2+e^{4x} s^4)^2}+ \frac{i}{8} \frac{ e^x s (-256-32 e^{2x} s^2+8 e^{4x} s^4+4 e^{6x} s^6)}{(16+8 e^{2x} s^2+e^{4x} s^4)^2},
$$ where as $S_{CR}(f_s)=0$.

\end{ex}


The following lemma establishes the right cocycle condition for the classical type Schwarzian.
\begin{lem}\label{mycocycle}
	Let $f=(F,f_3): \Omega \to \mathbb{H}^1$ be a positively oriented contact mapping and let $g=(G,g_3) : \Omega' \to \Omega$ be a conformal mapping, then we have the cocycle condition:
	\begin{align}
	 S_{CL}(f \circ g)= S_{CL}(f)\circ g \, (ZG)^2 + S_{CL}(g). \label{cocycle}
	\end{align}	
\end{lem}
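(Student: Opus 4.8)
The plan is to reduce the statement to the classical planar Schwarzian composition identity by showing that the operator $Z$ behaves exactly like the holomorphic derivative $d/dz$ when it is applied to a composition with a conformal map $g$. First I would record the two facts that make this work. Since $g$ is conformal, Lemma~\ref{confZbar} gives $\bar Z G = 0$; conjugating this relation and using that $X,Y$ have real coefficients, so that $\overline{\bar Z G} = Z \bar G$, I also obtain $Z \bar G = 0$. Because $g$ satisfies the contact equations and $Z$ is horizontal, the term $(TH \circ g)\, g^*\theta(Z)$ in the chain rule~\eqref{ComplexChainRule} vanishes. Hence for every $C^1$ complex function $H$ on the target of $g$,
\begin{align*}
Z(H \circ g) = (ZH \circ g)\, ZG,
\end{align*}
which is precisely the scalar complex chain rule with $ZG$ playing the role of $g'$.

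Next I would iterate this identity, applying it successively to $H=F$, $H=ZF$ and $H=Z^2F$ together with the Leibniz rule for $Z$. This yields
\begin{align*}
Z(F \circ g) &= (ZF \circ g)\, ZG,\\
Z^2(F \circ g) &= (Z^2 F \circ g)(ZG)^2 + (ZF \circ g)\, Z^2 G,\\
Z^3(F \circ g) &= (Z^3 F \circ g)(ZG)^3 + 3(Z^2 F \circ g)\, ZG\, Z^2 G + (ZF \circ g)\, Z^3 G,
\end{align*}
which are exactly the first three terms of the Fa\`a di Bruno expansion, identical in form to the Euclidean case. Note that $F\circ g$ is the complex coordinate of $f\circ g$, so these are the expressions entering $S_{CL}(f\circ g)$.

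Finally I would substitute these three expressions into the definition~\eqref{HeisSchwartz} of $S_{CL}(f\circ g)$. Forming the quotients $Z^2(F\circ g)/Z(F\circ g)$ and $Z^3(F\circ g)/Z(F\circ g)$ and assembling the combination $\tfrac{Z^3(F\circ g)}{Z(F\circ g)} - \tfrac{3}{2}\big(\tfrac{Z^2(F\circ g)}{Z(F\circ g)}\big)^2$, the terms linear in $(Z^2F\circ g)(Z^2G)/(ZF\circ g)$ cancel, and what remains regroups into $(ZG)^2$ times the $f$-Schwarzian evaluated along $g$ plus a purely $g$-dependent part, giving
\begin{align*}
S_{CL}(f \circ g)= S_{CL}(f)\circ g\,(ZG)^2 + S_{CL}(g).
\end{align*}
This is the asserted cocycle condition, and this last step is the textbook algebraic identity for the Schwarzian, so I expect no new difficulty there.

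I expect the only genuine obstacle to lie in the first step: justifying that $Z$ passes through the composition as cleanly as the holomorphic derivative does. Everything hinges on having simultaneously $\bar Z G = 0$ and $Z \bar G = 0$, i.e. on the fact that a positively oriented conformal map is CR holomorphic in its complex coordinate (Lemma~\ref{confZbar}). Once this is secured, the antiholomorphic contributions in~\eqref{ComplexChainRule} drop out at every order and the argument becomes the classical planar Schwarzian cocycle computation.
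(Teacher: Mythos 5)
Your proposal is correct and follows essentially the same route as the paper's proof: establish $Z(H\circ g)=(ZH\circ g)\,ZG$ from the chain rule~\eqref{ComplexChainRule} using $g^*\theta(Z)=0$ and $Z\bar G=\overline{\bar Z G}=0$, iterate to get the Fa\`a di Bruno expansions of $Z^2(F\circ g)$ and $Z^3(F\circ g)$, and then perform the classical algebraic cancellation of the cross terms $3\tfrac{Z^2F\circ g}{ZF\circ g}Z^2G$. No gaps.
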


\begin{proof}
	First note that
	\begin{align}
	Z(F \circ g) = g^*dF(Z) &=g^*( ZF dz + \bar Z F  d \bar z +TF \theta)(Z) \nonumber \\
	&= \Big (  (ZF \circ g) \, dG + (\bar Z F \circ g) \, d \bar G + (TF \circ g) \, g^*\theta \Big )(Z) \nonumber \\
	&= (ZF \circ g)\, ZG + (\bar Z F \circ g) \, Z \bar G + (TF \circ g) \, g^*\theta(Z) \label{zfg1}
	\end{align}
	and the assumptions imply that
	\begin{align}
	Z(F \circ g)&= (ZF \circ g) ZG, \label{zfg2}
	\end{align}
	since  $g^*\theta(Z)=0$ and $Z\bar G= \overline{ \bar Z G}=0$. Differentiating \eqref{zfg2} with $Z$ and $Z^2$, and repeating the calculation at \eqref{zfg1} with $F$ replaced by $ZF$ and $Z^2F$ we obtain the following identities
	\begin{align*}
	Z^2(F \circ g) &=  ( Z^2F \circ g)(ZG)^2 + (ZF \circ g) Z^2G,\\
	Z^3(F \circ g) &= ( Z^3F \circ g)(ZG)^3  + 3 (Z^2F \circ g) Z^2G\, ZG+ (ZF \circ g) Z^3G.
	\end{align*}
	Hence we have
	\begin{align*}
	\frac{Z^3(F \circ g) }{Z(F \circ g)}  &=  \frac{( Z^3F \circ g)}{(ZF \circ g) }(ZG)^2  + 3 \frac{ (Z^2F \circ g)}{(ZF \circ g) } Z^2G + \frac{Z^3G}{ ZG}
	\end{align*}
	and
	\begin{align*}
	\frac{3}{2}\left ( \frac{Z^2(F \circ g)}{Z(F \circ g)} \right )^2&=    \frac{3}{2} \left ( \frac{ Z^2F \circ g }{ZF \circ g }\right )^2 (ZG)^2 +   3 \left ( \frac{Z^2F \circ g }{ZF \circ g } \right )  Z^2G  +  \frac{3}{2} \left (\frac{ Z^2G}{ ZG} \right )^2
	\end{align*}
	which imply assertion~\eqref{cocycle}, as then it holds that
	\begin{align*}
	S_{CL}(f \circ g) &= \left ( \frac{( Z^3F \circ g)}{(ZF \circ g) } -\frac{3}{2} \left ( \frac{ Z^2F \circ g }{ZF \circ g }\right )^2  \right ) (ZG)^2 + \frac{Z^3G}{ ZG}   -  \frac{3}{2} \left (\frac{ Z^2G}{ ZG} \right )^2.
	\end{align*}
\end{proof}

%

Consequently the $(2,0)$ form $\Sigma(f)=S_{CL}(f) dz \otimes dz$ is conformally invariant.
\begin{cor}\label{cor63}
 If $f$ is a local diffeomorphism that satisfies the contact conditions, and $(z,t)=g(w,s)=(G(w,s),g_3(w,s))$ where $g$ is conformal, then
	\begin{align*}
	S_{CL}(f \circ g)(w,s) dw \otimes dw & = S_{CL}(f)(z,t)  dz \otimes dz.
	\end{align*}
\end{cor}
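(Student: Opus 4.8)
The plan is to read off this corollary as a tensorial repackaging of the cocycle identity in Lemma~\ref{mycocycle}. First I would observe that since $g$ is conformal, Theorem~\ref{thm61} gives $S_{CL}(g)=0$, so the cocycle condition reduces to the purely multiplicative relation
$$
S_{CL}(f\circ g) = \big(S_{CL}(f)\circ g\big)\,(ZG)^2 .
$$
All that remains is to recognize the scalar factor $(ZG)^2$ as the transformation factor of the $(2,0)$-form $dz\otimes dz$ under the change of variables $z=G(w,s)$; multiplying the displayed identity by $dw\otimes dw$ then produces the claimed equality of forms.

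Next I would compute the pullback of $dz\otimes dz$ explicitly. Using $g^*dz=dG$ together with the expansion $dG=(ZG)\,dw+(\bar Z G)\,d\bar w+(TG)\,\theta$ of the differential in the coframe $\{dw,d\bar w,\theta\}$ dual to $\{Z,\bar Z,T\}$ (the setting already exploited in the chain rule~\eqref{ComplexChainRule}), and invoking Lemma~\ref{confZbar}, which gives $\bar Z G=0$ for a conformal $g$, I obtain
$$
g^*dz = (ZG)\,dw + (TG)\,\theta .
$$
Forming the symmetric tensor square and restricting to the holomorphic tangent bundle $T^{1,0}\Hei$, where $\theta$ annihilates both $Z$ and $\bar Z$, all terms containing $\theta$ drop out, so that as $(2,0)$-forms
$$
g^*(dz\otimes dz) = (ZG)^2\, dw\otimes dw .
$$
Combining this with the reduced cocycle relation yields
$$
g^*\big(S_{CL}(f)\,dz\otimes dz\big) = \big(S_{CL}(f)\circ g\big)(ZG)^2\,dw\otimes dw = S_{CL}(f\circ g)\,dw\otimes dw ,
$$
which is exactly the assertion of the corollary.

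The only real care needed is the tensorial bookkeeping: one must use that $\{dz,d\bar z,\theta\}$ is the coframe dual to $\{Z,\bar Z,T\}$, so that $dz$ is genuinely the $(1,0)$ form paired with $Z$, and that the Schwarzian tensor is a $(2,0)$ object evaluated on horizontal holomorphic vectors, whence the $\theta$-terms in $g^*dz$ are invisible. I expect this step to be the main, and essentially the only, obstacle, but it is routine once the conformality of $g$, via $\bar Z G=0$, is used to discard the anti-holomorphic contribution; with that in hand the corollary is an immediate consequence of Lemma~\ref{mycocycle}.
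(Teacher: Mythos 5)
Your proposal is correct and follows the same route the paper intends: the paper states the corollary as an immediate consequence of the cocycle condition in Lemma~\ref{mycocycle} (with $S_{CL}(g)=0$ from Theorem~\ref{thm61}), and your explicit computation of $g^*(dz\otimes dz)=(ZG)^2\,dw\otimes dw$ on the horizontal holomorphic bundle is exactly the bookkeeping the paper leaves implicit.
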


Finally we examine the left cocycle condition.
\begin{lem}\label{mycocycle2}
	Let $\Om, \Om'\subset \Hei$ be domains and $f=(F,f_3): \Omega \to \mathbb{H}^1$ be a contact mapping. If $g=(G,g_3) : \Omega' \to \Omega$ is a conformal mapping, 
	then we have:
	\begin{align}  S_{CL}(g \circ f)= S_{CL}(f) & + \Big (  \frac{3}{2}  (\bar Z Z^2G \circ f)  -  3 \left ( \frac{Z^2G \circ f} {ZG \circ f }  \right ) \left (  \bar Z ZG \circ f    \right ) \, \Big ) \frac{(ZF) \,( Z \bar F)}{ZG \circ f}  \nonumber\\
	&   +  \left (\frac{\bar Z ZG \circ f} {ZG \circ f} \right ) \,  \Big (  (Z^2 \bar F)(ZF)    - 2  (Z^2F)( Z \bar F) \Big ) \frac{1}{ZF}\nonumber\\
	&  - \frac{3}{2}  \left (  \frac{\bar Z ZG \circ f }{ZG \circ f }   \right )^2   \,( Z \bar F)^2. \label{cocycle2} \end{align}	
\end{lem}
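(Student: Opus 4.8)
The plan is to compute the three horizontal holomorphic derivatives $Z(G\circ f)$, $Z^2(G\circ f)$, $Z^3(G\circ f)$ of the complex coordinate $G\circ f$ of $g\circ f$, insert them into the definition~\eqref{HeisSchwartz} of $S_{CL}(g\circ f)$, and collect terms. Everything rests on the complex chain rule~\eqref{ComplexChainRule} with inner map $f$: since $f$ is contact, $f^*\theta(Z)=\lambda_f\,\theta(Z)=0$, so for any $C^1$ function $\Phi$ on $\Om$ one has $Z(\Phi\circ f)=(Z\Phi\circ f)\,ZF+(\bar Z\Phi\circ f)\,Z\bar F$, i.e.\ the Reeb term never contributes along $Z$. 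I would apply this identity repeatedly --- to $\Phi=G$, then to $\Phi=ZG$, $Z^2G$ and $\bar Z ZG$ --- interleaving it with the ordinary Leibniz rule applied to the $f$-factors $ZF$, $Z\bar F$, $Z^2F$, $Z^2\bar F$.

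The second ingredient is the conformality of $g$. By Lemma~\ref{confZbar} this is equivalent to $\bar Z G=0$, which kills the bare term $(\bar Z G\circ f)\,Z\bar F$ and gives $Z(G\circ f)=(ZG\circ f)\,ZF$. The essential point, however, is that the \emph{mixed} higher derivatives of $G$ do not vanish, and it is precisely these that generate the correction terms in~\eqref{cocycle2}. Using the bracket $[\bar Z,Z]=2iT$ (equivalently $T=\tfrac{i}{2}[Z,\bar Z]$), the centrality of $T$, and $\bar Z G=0$, I would reduce the third-order $G$-derivatives that arise, establishing $\bar Z^2 Z G=0$ and $Z\bar Z Z G=\tfrac12\,\bar Z Z^2G$. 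These identities suppress the spurious third-order terms and leave the answer expressed solely through $\bar Z ZG\circ f$ and $\bar Z Z^2G\circ f$, matching the statement.

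Carrying this out yields $Z^2(G\circ f)=(Z^2G\circ f)(ZF)^2+(\bar Z ZG\circ f)(Z\bar F)(ZF)+(ZG\circ f)\,Z^2F$ and a six-term analogue for $Z^3(G\circ f)$. Forming $Z^3(G\circ f)/Z(G\circ f)$ and $\big(Z^2(G\circ f)/Z(G\circ f)\big)^2$ and subtracting as in~\eqref{HeisSchwartz}, I expect the terms to sort into three groups. The purely $f$-dependent part reassembles into $S_{CL}(f)$. The purely $G$-dependent third-order part reassembles into $(S_{CL}(g)\circ f)\,(ZF)^2$, which vanishes because $g$ is conformal and hence $S_{CL}(g)=0$ by Theorem~\ref{thm61}. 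The remaining cross terms, each carrying a factor $\bar Z ZG\circ f$ or $\bar Z Z^2G\circ f$, assemble into the stated correction.

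The main obstacle is the bookkeeping: propagating the mixed $\bar Z$-derivatives of $G$ correctly through three nested applications of the chain and Leibniz rules, where a sign or a factor of $\tfrac12$ is easy to mislay, and applying the commutator reductions consistently. This difficulty is genuinely absent from the right cocycle of Lemma~\ref{mycocycle}: there the inner map is conformal, so both $\bar Z G=0$ and $Z\bar G=0$ hold and the factors $Z\bar F$, $Z^2\bar F$ never enter, whereas here $f$ is only contact and these factors persist, producing exactly the extra terms in~\eqref{cocycle2}.
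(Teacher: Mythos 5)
Your proposal is correct and follows essentially the same route as the paper: iterate the complex chain rule~\eqref{ComplexChainRule} with inner map $f$ to get $Z(G\circ f)=(ZG\circ f)\,ZF$, the stated three-term formula for $Z^2(G\circ f)$, and the longer expansion of $Z^3(G\circ f)$, using $\bar Z G=0$ together with the commutator reductions $\bar Z^2 ZG=0$ and $Z\bar Z ZG=\tfrac12\,\bar Z Z^2G$ (the latter is exactly the identity the paper invokes), and then assemble the Schwarzian. The organization into an $S_{CL}(f)$ block, a vanishing $(S_{CL}(g)\circ f)(ZF)^2$ block, and the cross terms is a sound way to carry out what the paper calls the final ``direct computation.''
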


\begin{proof}
	First note that
	\begin{align}
	Z(G \circ f) = f^*dG(Z) &=f^*( ZG dz +TG \theta)(Z) \nonumber \\
	&= \Big (  (ZG \circ f) \, dF  + (TG \circ f) \, f^*\theta \Big )(Z) \nonumber \\
	&= (ZG \circ f)\, ZF  
	\end{align}
	since  $f^*\theta(Z)=0$ and $Z\bar G= \overline{ \bar Z G}=0$. Similarly, we also have that
	\begin{align*}
	Z^2(G \circ f) & = Z(ZG \circ f) ZF+ (ZG \circ f) Z^2F\\
	 & = f^* \Big ( ZZGdz +\bar Z ZG d \bar z +T ZG \theta \Big )   (Z)  ZF+ (ZG \circ f) Z^2F\\
	 & =  (Z^2G \circ f) ZF^2 + (\bar Z ZG\circ f) Z \bar F Z F  + (ZG \circ f) Z^2F\end{align*} and
	 \begin{align*}
	Z^3(G \circ f) & = (Z^3G \circ f ZF + \bar Z Z^2G \circ f Z \bar F)  ZF^2 + (Z^2G \circ f) 2 ZF \, Z^2F\\
	& \quad  +( \frac{1}{2} \bar Z Z^2 G\circ f ZF+ \bar Z^2 ZG \circ f Z \bar F) Z \bar F Z F  +(\bar Z ZG \circ f) (Z^2 \bar F Z F + Z \bar F Z^2 F) \\
	& \quad + (Z^2G \circ f ZF + \bar Z ZG \circ f Z \bar F ) Z^2F + (ZG \circ f) Z^3F,
	\end{align*}
	where in the second line of the above formula, we have used the fact that conformality of $g$ implies that
	$$Z \bar Z Z G=\frac{1}{2} \bar Z Z^2 G.$$
	Formula \eqref{cocycle2} follows by direct computation using the expressions above.
\end{proof}
Recall types 1 and 2 in Lemma~\ref{conftypes}.
\begin{cor} Let $f:\Om\to \Hei$ be a contact transformation. Then we have:

(1) If $g$ is type 1, then  $S_{CL}(g \circ f)= S_{CL}(f)$.

(2) If $g$ is type 2, then  $S_{CL}(g \circ f)= S_{CL}(f)$ if and only $f$ is conformal.
\end{cor}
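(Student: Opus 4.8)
The plan is to read off both parts directly from the left cocycle formula \eqref{cocycle2} in Lemma~\ref{mycocycle2}, exploiting the fact that every term on its right-hand side beyond $S_{CL}(f)$ carries a factor of either $\bar Z Z G\circ f$ or $\bar Z Z^2 G\circ f$. So I first record how these ``mixed'' derivatives of a conformal $g$ behave. Since $g$ is conformal, $\bar Z G=0$ by Lemma~\ref{confZbar}; combining this with the bracket $[Z,\bar Z]=-2iT$ (equivalently $T=\tfrac{i}{2}[Z,\bar Z]$, as recorded in Section~5) gives $\bar Z Z G = Z\bar Z G + [\bar Z,Z]G = 2i\,TG$, and then the conformality identity $Z\bar Z ZG=\tfrac12\bar Z Z^2G$ used in the proof of Lemma~\ref{mycocycle2}, together with $[Z,T]=0$, yields $\bar Z Z^2 G = 2Z(\bar Z Z G)=4i\,ZTG$. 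Thus both mixed derivatives are governed by $TG$.

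For part (1), I would show that type~1 maps satisfy $TG\equiv 0$. The generators of type~1, namely the translations $\tau_p$, rotations $r_\varphi$ and dilations $\delta_r$, all have complex coordinate $G$ independent of the vertical variable $t$, and this property is preserved under composition (via the complex chain rule \eqref{ComplexChainRule}, since each factor is $t$-independent and CR holomorphic). Hence $TG\equiv 0$ for every type~1 $g$, so that $\bar Z Z G=2iTG=0$ and then $\bar Z Z^2 G = 2Z(\bar Z Z G)=0$. Every extra term in \eqref{cocycle2} then vanishes identically, giving $S_{CL}(g\circ f)=S_{CL}(f)$ for every contact $f$.

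For part (2), the \emph{if} direction is immediate: if $f$ is conformal then $\bar Z F=0$ by Lemma~\ref{confZbar}, hence $Z\bar F=\overline{\bar Z F}=0$ and $Z^2\bar F=Z(Z\bar F)=0$; substituting $Z\bar F=Z^2\bar F=0$ into \eqref{cocycle2} annihilates all three extra terms. For the \emph{only if} direction I would first reduce to the inversion. Writing a type~2 map as $g=\tau_p\circ\iota\circ g_1$ with $g_1$ type~1 (see \eqref{types12}), part (1) applied to the type~1 factors $\tau_p$ and $g_1$ gives $S_{CL}(g\circ f)=S_{CL}(\iota\circ g_1\circ f)$ and $S_{CL}(g_1\circ f)=S_{CL}(f)$; setting $\tilde f=g_1\circ f$ (which is contact, and conformal if and only if $f$ is, since $g_1$ is a conformal diffeomorphism), the hypothesis becomes $S_{CL}(\iota\circ\tilde f)=S_{CL}(\tilde f)$ and the goal becomes that $\tilde f$ is conformal.

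It then remains to treat $g=\iota$. Here $TG_\iota\not\equiv 0$, since $G_\iota=z/(it-|z|^2)$ depends genuinely on $t$, so $\bar Z Z G_\iota=2iTG_\iota$ and $\bar Z Z^2 G_\iota=4iZTG_\iota$ are explicit nonvanishing functions. Inserting these into \eqref{cocycle2} presents $S_{CL}(\iota\circ\tilde f)-S_{CL}(\tilde f)$ as an explicit expression that is polynomial in the anti-CR-holomorphic derivatives $Z\bar{\tilde F}$ and $Z^2\bar{\tilde F}=Z(Z\bar{\tilde F})$ of $\tilde f$, with every monomial divisible by $Z\bar{\tilde F}$. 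The main obstacle is to verify that this expression vanishes identically only when $Z\bar{\tilde F}\equiv 0$, i.e. to rule out spurious cancellation among the three groups of terms in \eqref{cocycle2}. I would do this by computing the inversion coefficients $\bar Z Z G_\iota/ZG_\iota$ and $\bar Z Z^2G_\iota/ZG_\iota$ explicitly, in the spirit of the computation behind Claim~3 of Section~5 where factors $|G|^2/\|g\|_{\Hei}^4$ appear, and isolating the term linear in $Z\bar{\tilde F}$ together with its nonvanishing coefficient. Once $Z\bar{\tilde F}\equiv 0$ is forced, Lemma~\ref{confZbar} shows $\tilde f$, and hence $f$, is conformal, completing the equivalence.
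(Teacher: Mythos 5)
Your treatment of part (1) and of the ``if'' half of part (2) is correct and essentially matches the paper's: for type 1 one checks that the $G$-coefficients $\bar Z ZG$ and $\bar Z Z^2G$ in \eqref{cocycle2} vanish (your route through $TG\equiv 0$ is a clean way to see this), and for conformal $f$ one substitutes $Z\bar F=Z^2\bar F=0$. Moreover your identity $\bar Z ZG=2i\,TG$ for conformal $G$ is correct, and it is worth noting that it is at odds with the paper's own proof of the ``only if'' half, which asserts $\bar Z ZG=0$ for type 2 maps and on that basis discards the second and third groups of terms in \eqref{cocycle2}, keeping only the single term $\tfrac32(\bar Z Z^2G\circ f)\,ZF\,Z\bar F/(ZG\circ f)$ whose vanishing immediately forces $Z\bar F=0$. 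Since $TG_\iota\neq 0$, those discarded terms do not in fact drop out, so you are confronting a genuinely harder identity than the paper does.

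The gap is in your completion of the ``only if'' half. The reduction to $g=\iota$ via part (1) is fine, but the claim that the resulting expression is ``polynomial in $Z\bar{\tilde F}$ and $Z^2\bar{\tilde F}$ with every monomial divisible by $Z\bar{\tilde F}$'' is false: the second group of terms in \eqref{cocycle2} contributes $\frac{\bar Z ZG\circ\tilde f}{ZG\circ\tilde f}\,Z^2\bar{\tilde F}$, which carries no factor of $Z\bar{\tilde F}$. Consequently the vanishing of the extra terms is not an algebraic identity whose ``linear part'' can be isolated by equating coefficients; writing $u=Z\bar{\tilde F}$ and $Z^2\bar{\tilde F}=Zu$, it is a first-order Riccati-type equation $Zu=au+bu^2$ with explicit coefficients built from the inversion, and such equations admit abundant nonzero local solutions. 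To conclude $u\equiv 0$ one must bring in something beyond this pointwise identity --- e.g.\ the contact equations coupling $\tilde F$, $\bar{\tilde F}$ and $\tilde f_3$, or the relation $u=\overline{\bar Z\tilde F}$ together with the conjugate of the identity --- and your sketch supplies none of this. As it stands, the ``only if'' direction is not proved.
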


\begin{proof}  If $g$ is type 1, then $Z ZG=0$ and $\bar Z ZG=0$. Hence the right hand side ot \eqref{cocycle2} reduces to $S_{CL}(f)$.

If $g$ is type 2, then $\bar Z ZG=0$ but $Z ZG \ne 0$ and $ \bar Z Z ZG \ne 0$. Then it follows that the right hand side of \eqref{cocycle2} reduces to
\begin{align*}
  S_{CL}(g \circ f)= S_{CL}(f) & +   \frac{3}{2}  (\bar Z ZZG \circ f)      \frac{(ZF) \,( Z \bar F)}{ZG \circ f}.
\end{align*}
Therefore, $S_{CL}(g \circ f)= S_{CL}(f)$ implies $\bar Z F=0$.

Conversely, if $f$ is conformal then $S_{CL}(f)=0$, $Z \bar F= \overline{\bar Z F}=0$, and the right hand side of \eqref{cocycle2} reduces to $0$. Hence we have  $S_{CL}(g \circ f)= S_{CL}(f)=0$.
\end{proof}

Next we study the size of the kernel for $S_{CL}$ and show that it contains not only the conformal mappings.

\begin{prop}\label{prop66}
	There exist a family of transformations $f$, parameterised by an infinite dimensional vector space, that satisfy the contact conditions and $S_{CL}(f)=0$.
\end{prop}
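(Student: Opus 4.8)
Since $S_{CL}(f)$ in \eqref{HeisSchwartz} depends on $f$ only through $F=f_1+if_2$, and $S_{CL}(F)$ is just the classical Schwarzian with the ordinary derivative replaced by the complex horizontal field $Z$, the plan is to first produce a large supply of $F$ with $S_{CL}(F)=0$, and then to complete each such $F$ to a map obeying the contact equations \eqref{contact-eqs}. The cleanest sufficient condition is $Z^2F=0$: at points where $ZF\neq0$ this forces $Z^2F/ZF=0$, so both terms of \eqref{HeisSchwartz} vanish and $S_{CL}(F)=0$. (More generally $F=(a\psi+b)/(c\psi+d)$ with $Z^2\psi=0$ works too, by the chain-rule form of the Schwarzian, since $Z$ is a derivation.) Because $\bar Z F$ need not vanish for such $F$, Lemma~\ref{confZbar} shows these maps are in general not conformal, so the kernel of $S_{CL}$ genuinely exceeds the conformal class.

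Next I would solve $Z^2F=0$ explicitly by characteristics for $Z=\partial_z+i\bar z\,\partial_t$. One checks $Z\bar z=0$, $Z\zeta=0$ and $Zz=1$, where $\zeta:=t-i|z|^2$; hence $\ker Z$ consists exactly of the functions of $(\bar z,\zeta)$, and the general solution of $Z^2F=0$ is $F=z\,G(\bar z,\zeta)+H(\bar z,\zeta)$ with $G,H$ arbitrary $C^2$ functions. This already exhibits an infinite-dimensional vector space of horizontal parts annihilated by $S_{CL}$. The flows of Example~\ref{ex3} with $h$ quadratic are the special case $F=z-is(2ax+b)$, which confirms that contact-completable non-conformal members of this kernel exist.

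It then remains to produce, for an infinite-dimensional subfamily of these $F$, a real function $f_3$ making $f=(F,f_3)$ contact. By \eqref{zcontact} this amounts to solving $Zf_3=\tfrac{i}{2}(\bar F\,ZF-F\,Z\bar F)$ for real $f_3$; writing $\Phi$ for the right-hand side and using the frame relations $[Z,\bar Z]=-2iT$ and $[Z,T]=0$, a real solution forces $Tf_3=-\tfrac{i}{2}(\bar Z\Phi-Z\bar\Phi)$ and exists precisely when the integrability condition $Z(Tf_3)=T\Phi$ (together with its conjugate) holds. I would impose this condition on the pair $(G,H)$ and isolate an infinite-dimensional linear slice of $\ker(Z^2)$ on which it is satisfied, then check that the assignment $(G,H)\mapsto(F,f_3)$ is injective into the space of contact maps, giving the asserted family.

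The hard part is exactly this last step: reconciling $S_{CL}(F)=0$ with the contact integrability while keeping infinitely many parameters. A naive count of conditions (the Schwarzian equation plus the two real integrability equations against the two real components of $F$) looks over-determined, and restricting too severely (for instance to $G,H$ depending only on $\zeta$) risks reducing integrability to ordinary differential equations in $\zeta$ with only finitely many solutions. The argument must instead exploit that these differential conditions are compatible along the $Z$-characteristics — the integration data are free functions in $\ker Z$ — so that a genuinely infinite-dimensional family of $(G,H)$ survives. Establishing that such an infinite-dimensional slice of parameters remains is where the real work lies.
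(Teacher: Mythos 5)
Your route differs from the paper's: instead of reducing $S_{CL}(f)=0$ to the Riccati equation $Zu=\tfrac12 u^2$ for $u=Z^2F/ZF$, rewriting it as $Z(-2/u)=1$, exhibiting an infinite-dimensional family of solutions $H$ of $ZH=1$ (parameterised by an arbitrary planar harmonic function $Q$), and then invoking analytic hypoellipticity of $Z$ together with the Cauchy--Kovalevskaya theorem to successively produce $W=\log ZF$, then $F$, then $f_3$, you restrict to the special solution $u\equiv 0$, i.e.\ $Z^2F=0$, and solve it by first integrals of $Z$. That restriction is legitimate as far as producing candidates for $F$ goes, but two problems remain. First, a technical one: the general solution of $ZF=0$ is \emph{not} $F=G(\bar z,\zeta)$ with $G$ an arbitrary $C^2$ function of the two complex first integrals, because $Z$ annihilates $\bar z$ and $\zeta=t-i|z|^2$ but not their conjugates ($Z\bar\zeta=2i\bar z\neq 0$, $Zz=1$); you need $G$ and $H$ to be holomorphic in $(\bar z,\zeta)$ (restrictions of holomorphic functions of two variables to the image hypersurface). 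The kernel is still infinite dimensional, so this is repairable, but as written the characterisation is false.

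The genuine gap is the one you yourself flag: you never produce the third coordinate. The system $Zf_3=\tfrac{i}{2}(\bar F\,ZF-F\,Z\bar F)$ is two real first-order equations for the single real unknown $f_3$, hence overdetermined, and as you correctly note the bracket relation forces $Tf_3$ and thereby imposes second-order compatibility conditions on $F$ alone. A generic $F$ with $Z^2F=0$ will violate them, so the intersection of $\ker(Z^2)$ with the contact-completable maps could a priori be finite dimensional; your proposal ends precisely at the point where this must be ruled out, which is where the entire content of the proposition lies. The paper discharges this step by working in the analytic category (any solution of $ZH=1$ is analytic by hypoellipticity, hence so are $W$, $F$ and the data in \eqref{zcontact}) and applying Cauchy--Kovalevskaya to the contact equation itself, keeping the harmonic function $Q$ as the free infinite-dimensional parameter throughout. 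Without an argument of comparable force --- either verifying the integrability conditions on an explicit infinite-dimensional slice of your $(G,H)$, or an existence theorem for the overdetermined system --- the proposal does not establish the proposition.
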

\begin{proof}
	If $f=(F, f_3)$ is a positively oriented contact diffeomorphism of a domain $\Omega \subseteq \Hei$, then  $ZF \ne 0$ on $\Omega$, and $S_{CL}(f)=0$ is equivalent to the system
	\begin{align}
	Zu-\frac{1}{2} u^2=0,  \quad u=\frac{ Z^2 F}{ZF}. \label{zzFzF}
	\end{align}
Moreover, the first equation can be written in the equivalent form $Z(-2/u)=1$. Conversely, we can start by solving the equation $Z(H)=1$ on $\Omega$ and observe that since $Z$ is analytically hypoelliptic, any solution $H$ must be analytic. For example, the function $H=h_1+i h_2$ where
\begin{align*}
h_1(x,y,t) &= C_1 (t + 2 xy) +2 x+ C_2 + \int \dbd{\psi}{x} dy\\
h_2(x,y,t) &= \psi(x,y) + C_3\\
\frac{ \partial^2 \psi}{\partial x^2} +\frac{ \partial^2 \psi}{\partial y^2}  &=  - 4C_1
\end{align*} solves $ZH=1$ on the domain of $\psi$. Moreover we can choose
$$ \psi(x,y)= Q(x,y)  -C_1(x^2+y^2) $$
where $Q$ is any real valued harmonic function, i.e., the choices for $Q$ form an infinite dimensional vector space.

Having found $H$ we then solve
\begin{align} Z^2 F+\frac{2}{H} ZF =0 \label{ZFord2}\end{align} on the support of $H$ with the condition $ZF \ne 0$. To this end, we can consider $W= \log ZF$ for some branch of the logarithm since $ZF \ne 0$ and \eqref{ZFord2} becomes  \begin{align} ZW=-\frac{2}{H}.  \label{ZWeq}\end{align}
Since the right hand side of \eqref{ZWeq} is analytic on the support of $H$, the Cauchy--Kovalevskaya theorem then implies the existence of an analytic solution $W$ defined on $\Omega$, and by the same argument, the equation $ZF=e^W$ has an analytic solution $F$ defined on $\Omega$. Furthermore, the Cauchy--Kovalevskaya theorem applies to \eqref{zcontact} for our given $F$, and so there is an analytic map $f=(F,f_3) : \Omega \to \Hei $ which satisfies the contact equations, although there is no guarantee that $f$ is a diffeomorphism.
\end{proof}	

\begin{rem}The map $f$ constructed above will be diffeomorphic on a neighborhood of any point $p$ at which $|ZF(p)| \ne |\bar Z(p)|$.
\end{rem}


\section{Preschwarzian and superharmonicity results for Jacobian and related differential expressions}

One of the main goals of this section is to introduce a notion of Preschwarzian in the setting of the Heisenberg group $\Hei$. Similar notions have been studied, for instance, in the context of planar harmonic mappings, see~\cite{HM} for the properties of the Preschwarzian. Moreover, in~\cite{HM} it is shown that several results known for Schwarzians have their natural counterparts for Preschwarzians, see the discussion preceding Theorem~\ref{thm-Pf} below. The fact that in the planar case, the Preschwarzian is a viable tool that is useful beyond the class of (quasi)conformal mappings, makes the Preschwarzian for various classes of mappings in $\Hei$ a worthy consideration. Since the key expression in Definition~\ref{def-presch} of the Preschwarzian is the horizontal Jacobian $J_F$, we focus our attention on studying its properties and the related expressions, such as $|ZF|^2$ and $|\nabla_H F|^2$. Namely, in Proposition~\ref{subh-lem2}, Lemma~\ref{subh-lem1} and Corollary~\ref{cor-subh}, we investigate super- and subharmonicity results for the class of harmonic mappings (Definition~\ref{harm-map}) and the so-called gradient harmonic mappings (Definition~\ref{grad-harm}), the latter one introduced here in $\Hei$. Our studies involve and emphasize the contact equations (without the assumption of local diffeomorphism, though), see Proposition~\ref{cont-f3}, relate gradient harmonic mappings to a system of non-homogeneous subelliptic harmonic equations, see~\eqref{eqs-Harmu}. Furthermore, we discover connections to the analysis of level sets, see~\eqref{geom-term} and the discussion therein, also the Bochner type identity in Lemma~\ref{lem-Bochner}. We finish presentation of results in this section with Theorem~\ref{thm-Pf} providing some growth estimates for a horizontal Jacobian of a gradient harmonic mapping. Similar results are known for harmonic mappings in the plane and their proofs are based on complex analysis and ODE techniques, whereas our proof relies on modern analysis involving the potential-theoretic properties and the upper gradients.


\begin{defn}\label{def-presch}
Let $\Om, \Om'\subset \Hei$ be domains and  let $f=(f_1,f_2,f_3):\Om\to\Om'$ be a $C^1$ mapping with a positive Jacobian $\lambda_f:=J_F>0$ in $\Om$, where $F=f_1+if_2$. The \emph{Preschwarzian of $f$ in $\Om$} is defined as follows:
\begin{equation}
Pf(x):=Z\ln J_F(x),\quad\hbox{for any } x\in\Om.
\end{equation}
\end{defn}

Notice that the coefficients of the CR-Schwarzian of $f$ can be expressed by $Pf$ as follows (cf.~\eqref{Sf-CR}):
\[
 2(Z^2\phi - 2 (Z\phi)^2)=ZPf-(Pf)^2,\quad \phi:=\frac12 \ln J_F.
\]
Therefore, a necessary condition for $S(f)\equiv 0$ is that $Pf\equiv 0$. Moreover, the following equation emphasizes further the link between the Jacobian and the Preschwarzian:
\begin{equation}
 |Pf|=|Z(\ln J_F)|=|\nabla_H \ln J_F|=\frac{1}{J_F}|\nabla_H J_F|.
\end{equation}

By direct computation, one also shows that
\[
 \Delta_H\ln J_F=8\Re(\bZ Pf).
\]

Observe that in Definition~\ref{def-presch} the third component function $f_3$ of $f$ plays no role and hence all mappings with the same third component function have the same Preschwarzian. However, later on we study mappings satisfying the contact equations and so their Preschwarzians carry information about all component functions.

The following lemma is a counterpart of Proposition 1 in \cite{HM} shown for sense preserving harmonic mapings in $\C$.
\begin{lem}[Composition properties of Preschwarzian]
 Let $f$ be as in Definition~\ref{def-presch}. If $g$ is conformal, then $P(f\circ g)=(Pf\circ g) ZG + Pg$. Moreover, if $A$ is affine, then $P(A\circ f)=Pf$.
\end{lem}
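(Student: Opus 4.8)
The plan is to reduce both identities to the multiplicative behaviour of the horizontal Jacobian $\lambda_f=J_F$ under composition, and then to take a logarithm and apply the operator $Z$. Throughout I will use the complex chain rule~\eqref{ComplexChainRule} together with the formula $\lambda_f=|ZF|^2-|\bar Z F|^2$ recorded in the proof of Lemma~\ref{confZbar}.

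For the first identity, let $g=(G,g_3)$ be conformal, so that by Lemma~\ref{confZbar} we have $\bar Z G=0$ and hence $Z\bar G=\overline{\bar Z G}=0$; moreover $g^*\theta(Z)=\lambda_g\,\theta(Z)=0$ since $Z$ is horizontal. Applying~\eqref{ComplexChainRule} to $F$ with $V=Z$ and with $V=\bar Z$ then yields $Z(F\circ g)=(ZF\circ g)\,ZG$ and $\bar Z(F\circ g)=(\bar Z F\circ g)\,\overline{ZG}$, from which
\[
 \lambda_{f\circ g}=|Z(F\circ g)|^2-|\bar Z(F\circ g)|^2=(\lambda_f\circ g)\,|ZG|^2=(\lambda_f\circ g)\,\lambda_g,
\]
using $\lambda_g=|ZG|^2$ for conformal $g$. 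Taking logarithms and applying $Z$ gives $P(f\circ g)=Z(\ln\lambda_f\circ g)+Pg$, and a second use of~\eqref{ComplexChainRule} on the real-valued function $\ln\lambda_f$ (again killing the $Z\bar G$ and $g^*\theta(Z)$ terms) produces $Z(\ln\lambda_f\circ g)=(Z\ln\lambda_f\circ g)\,ZG=(Pf\circ g)\,ZG$. Combining the two computations yields $P(f\circ g)=(Pf\circ g)\,ZG+Pg$.

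For the second identity I take an affine map $A$ to be a composition of a left translation with an $\R$-linear map of $\C$; in particular its horizontal complex component $H=A_1+iA_2$ has the form $H(z)=az+b\bar z+c$, independent of $t$, so $ZH$ and $\bar Z H$ are constants and $TH=0$. Applying~\eqref{ComplexChainRule} to $H$ with $V=Z$ and $V=\bar Z$ (the $f^*\theta$ terms now drop because $TH=0$) gives $Z(H\circ f)=(ZH)\,ZF+(\bar Z H)\,Z\bar F$ and $\bar Z(H\circ f)=(ZH)\,\bar Z F+(\bar Z H)\,\bar Z\bar F$. A short computation, using $Z\bar F=\overline{\bar Z F}$ and $\bar Z\bar F=\overline{ZF}$, then collapses the cross terms and shows
\[
 \lambda_{A\circ f}=(|ZH|^2-|\bar Z H|^2)\,\lambda_f=c_A\,\lambda_f,
\]
with $c_A$ a positive constant. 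Since $\ln\lambda_{A\circ f}=\ln c_A+\ln\lambda_f$ and $Z$ annihilates constants, $P(A\circ f)=Z\ln\lambda_{A\circ f}=Z\ln\lambda_f=Pf$.

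The routine but most delicate point is the bookkeeping of the chain rule: one must verify in each case that the anti-holomorphic term $Z\bar G$ (respectively the $f^*\theta(Z)$ term) genuinely vanishes, which in the conformal case is exactly Lemma~\ref{confZbar} together with the contact condition, and in the affine case follows from the $t$-independence of $H$. The only conceptual choice is fixing the meaning of \emph{affine} so that the horizontal Jacobian $c_A$ is constant; once that is done, both statements are immediate consequences of the factorisation of $\lambda_{\,\cdot\,}$.
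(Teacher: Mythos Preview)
Your proof is correct and follows essentially the same route as the paper's: both arguments reduce to the multiplicativity $\lambda_{f\circ g}=(\lambda_f\circ g)\lambda_g$ for conformal $g$ and $\lambda_{A\circ f}=(|a|^2-|b|^2)\lambda_f$ for affine $A$, then apply $Z\ln$. Your derivation of the affine factorisation via the explicit cross-term cancellation is somewhat more detailed than the paper's, which simply asserts $P(A\circ f)=\frac{Z(\lambda_A\circ f)}{\lambda_A\circ f}+Pf$ and plugs in $\lambda_A=|a|^2-|b|^2$, but the substance is identical.
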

\begin{proof}
Let $V$ be any vector field with values in $\C \otimes T\Hei$ and $F$ a $C^1$ complex valued function. Let further $F$ and $g$ be local diffeomorphisms. Computations similar to those in the proof of Lemma~\ref{mycocycle} result in the following differentiation formula:
 $$
 V(F \circ g) = g^*dF(V)= (ZF \circ g) VG + (\bar Z F \circ g) V \bar G + (TF \circ g) g^*\theta(V).
 $$
 In particular, when $g$ is contact and $V \in \C \otimes T\mathcal{H}$, then
  $$
  V(F \circ g) =  (ZF \circ g) VG + (\bar Z F \circ g) V \bar G.
 $$
  When $g$ is conformal we have $Z(F \circ g) =  (ZF \circ g) ZG$ and $\bar Z(F \circ g) =  (\bar Z F \circ g) \overline{ZG}$. It follows that $\lambda_{f \circ g} = (\lambda_f \circ g ) \lambda_g$ and  $P(f \circ g) = Z \ln(\lambda_f \circ g )  + Pg$ since $\lambda_g=|ZG|^2$. The first assertion of the lemma now follows by observing that
 \begin{align*}
Z \ln(\lambda_f \circ g )& = \frac{Z(\lambda_{f} \circ g)}{\lambda_{f}\circ g} = \frac{ (Z\lambda_{f} \circ g) Zg}{\lambda_{f}\circ g}=(Pf\circ g) ZG.
 \end{align*}


Let $A(x):=(az+b\bar{z}+c, dt+e):=(F_A, A_3)$, where $a,b,c\in \C$ and $d,e\in \R$, be an affine map of $\Hei$. It follows that $ZF_A\equiv a$ which further implies that  $ZZF_A\equiv 0$ and $\bZ Z F_A\equiv 0$, moreover, $\bZ F_A \equiv b$. Therefore,
 \[
  P(A\circ f)=\frac{Z(\lambda_{A}\circ f)}{\lambda_{A}\circ f}+Pf=\frac{Z((|a|^2-|b|^2)\circ f)}{(|a|^2-|b|^2)\circ f}+Pf=Pf.
 \]
\end{proof}

 In order to compare the setting of Euclidean spaces and Heisenberg groups, also to illustrate rigidity implied by the group structure, we will now discuss some properties of mappings with equal Preschwarzians. In the setting of sense-preserving harmonic mappings in the plane, equal Schwarzians imply a homothety condition for sums of $z$-derivatives, see~\cite{CDO} and the discussion in~\cite[Section 5]{HM}. Furthermore, in the same setting, Theorem 3 in~\cite{HM} asserts that for simply-connected domains in $\C$ equal Preschwarzians imply the homothety condition for Jacobians of sense-preserving mappings. The proof of this result employs a number of standard tools in complex analysis, including the Riemann mapping theorem, a dilatation formulas for planar harmonic mappings and some basic ODE techniques, see pg. 79-81 in~\cite{HM}. Both, the assumptions imposed on mappings in subject and the complexity of that proof, remain in the striking contrast to the Heisenberg setting as the following observation shows.
\begin{prop}
 Let $f$ and $g$ be mappings in a domain $\Om\subset \Hei$ such that $J_F$ and $J_G$ have the same constant sign in $\Om$ and $P_f=P_g$ in $\Om$. Then $J_F=cJ_G$ for a constant $c>0$.
\end{prop}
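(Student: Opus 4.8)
The plan is to exploit the definition $Pf = Z\ln J_F$ directly. Since $J_F$ and $J_G$ have the same constant sign, say positive (the negative case is symmetric, or handled by replacing $J_F$ with $|J_F|$), the expressions $\ln J_F$ and $\ln J_G$ are well-defined real-valued functions on $\Om$, and the hypothesis $P_f = P_g$ reads $Z\ln J_F = Z\ln J_G$, i.e.\ $Z(\ln J_F - \ln J_G) = 0$ on $\Om$. Setting $u = \ln J_F - \ln J_G = \ln(J_F/J_G)$, the entire problem reduces to showing that a real-valued function $u$ with $Zu = 0$ on a domain must be constant.

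The key observation is that $u$ is \emph{real-valued}, whereas $Z = \frac{1}{2}(X - iY)$ is a complex vector field. First I would take real and imaginary parts of $Zu = 0$: since $u$ is real, $Zu = \frac{1}{2}(Xu - iYu)$, and this vanishes if and only if both $Xu = 0$ and $Yu = 0$ on $\Om$. Thus $u$ is annihilated by both horizontal vector fields $X$ and $Y$, i.e.\ $\nabla_H u \equiv 0$ on $\Om$.

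To conclude that $u$ is constant, I would invoke the bracket-generating (H\"ormander) property recorded in the Preliminaries: the single nontrivial bracket $[X,Y] = -4T$ shows that $X$, $Y$, and $[X,Y]$ span the full tangent space at every point. Hence $Tu = -\tfrac{1}{4}[X,Y]u = -\tfrac{1}{4}(XYu - YXu) = 0$ as well, so all three coordinate derivatives $\partial_x u$, $\partial_y u$, $\partial_t u$ vanish (equivalently, $u$ is constant along a spanning set of directions, and connectivity of $\Om$ by horizontal curves forces $u$ to be globally constant). Therefore $u \equiv \ln c$ for some real constant, giving $\ln(J_F/J_G) = \ln c$, that is $J_F = cJ_G$ with $c = e^{\,\ln c} > 0$.

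The only step requiring care, rather than genuine difficulty, is the passage from $\nabla_H u = 0$ to $u$ constant: one must use that $\Om$ is a domain (connected) together with the horizontal connectivity guaranteed by the bracket condition, so that $u$ being locally constant along horizontal curves through every point propagates to global constancy. I expect no real obstacle here; the rigidity of the group structure—specifically that a scalar real function killed by $Z$ is automatically killed by all of $X$, $Y$, $T$—is exactly what makes the argument collapse to a one-line computation, in sharp contrast to the planar harmonic case cited from \cite{HM}, where the Riemann mapping theorem and dilatation machinery are needed.
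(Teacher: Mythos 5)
Your proof is correct and follows essentially the same route as the paper's: write the hypothesis as $Z\ln(J_F/J_G)=0$, use that the function is real-valued to deduce annihilation by both $X$ and $Y$, invoke the bracket relation $[X,Y]=-4T$ to get annihilation by $T$, and conclude constancy on the connected domain. The only cosmetic difference is that the paper works with $\ln(J_F/J_G)$ directly (which is automatically well-defined since the ratio is positive), sidestepping your brief aside about the sign of the individual Jacobians.
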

 \begin{proof}
 The condition $P_f=P_g$ is equivalent to $(X-iY)\ln \frac{J_F}{J_G}=0$ and it follows that $\frac{J_F}{J_G}$ is annihilated by both $X$ and $Y$ on $\Om$. The bracket generating property then implies that $\frac{J_F}{J_G}$ is also annihilated by $T$ on $\Om$ and so we have that $\frac{\partial}{\partial t} \frac{J_F}{J_G}\equiv 0$, forcing  $\frac{\partial}{\partial x} \frac{J_F}{J_G}\equiv 0$ and $\frac{\partial}{\partial y} \frac{J_F}{J_G}\equiv 0$ to hold in $\Om$.
\end{proof}
Note that no contact conditions assumption on $f$ and $g$ are required in the above proof.

One of the key results for studying the potential-analytic properties of functions and mappings is the superharmonicity of the log of Jacobians. Below we discuss a variety of conditions implying superharmonicity and similar ones; also we relate such properties to the Preschwarzian. We begin with a preliminary technical result for sub-  and superharmonicity of mappings in $\Hei$. The general condition below is applied in Proposition~\ref{subh-lem2}.
\begin{lem}\label{subh-lem1}
 Let $\Om,\Om'\subset \Hei$ be domains and $f:\Om\to \Om'$ be a $C^3$-mapping. Then the following results hold:
 \begin{itemize}
 \item [(1)]  $\Delta_H \ln|ZF|^2\leq 0$ in $\Om\cap\{ZF\not=0\}$ provided that
\begin{equation}\label{cond-lem1}
\Re(ZF\,\Delta_H(\overline{ZF}))\leq 0\quad\hbox{ and }\quad \Re\left(\frac{\bZ \bar{F}}{ZF}\,ZZF\, \bZ ZF\right)\geq 0.
\end{equation}
The second condition can be equivalently formulated as $\Re\left(|ZF|^2\frac{ZZF}{ZF}\,\frac{\bZ ZF}{ZF}\right)\geq 0$.

\item[(2)] $\Delta_H |ZF|^2\geq 0$ provided that
\begin{equation}\label{cond2-lem1}
\Re(ZF\,\Delta_H(\overline{ZF}))\geq 0.
\end{equation}
\end{itemize}
\end{lem}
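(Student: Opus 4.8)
The plan is to rewrite the sub-Laplacian in terms of the CR operators and exploit the symmetry $\Delta_H = 2(Z\bar Z + \bar Z Z)$, which follows from $X = Z + \bar Z$, $Y = i(Z - \bar Z)$, together with two elementary facts used repeatedly: $\Delta_H$ has real coefficients, so it commutes with complex conjugation ($\Delta_H \bar w = \overline{\Delta_H w}$), and $\bar Z \bar F = \overline{ZF}$. Throughout I write $w = ZF$, so that $Zw = Z^2F = ZZF$ and $\bar Z w = \bar Z ZF$.

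For part (2) I would first apply the Leibniz rule for the sub-Laplacian to the product $|ZF|^2 = w\bar w$:
\[
\Delta_H(w\bar w) = (\Delta_H w)\bar w + w\,(\Delta_H \bar w) + 2\big((Xw)(X\bar w) + (Yw)(Y\bar w)\big).
\]
Since $X,Y$ are real vector fields, $X\bar w = \overline{Xw}$ and $Y\bar w = \overline{Yw}$, so the last term equals $2(|Xw|^2 + |Yw|^2)\geq 0$. Because $\Delta_H \bar w = \overline{\Delta_H w}$, the first two terms combine to $2\Re(\bar w\,\Delta_H w) = 2\Re(ZF\,\Delta_H\overline{ZF})$, which is $\geq 0$ precisely under hypothesis \eqref{cond2-lem1}. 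Adding the two nonnegative contributions yields $\Delta_H|ZF|^2 \geq 0$.

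For part (1) the key is the identity
\[
\Delta_H \ln w = \frac{\Delta_H w}{w} - \frac{4\,(Zw)(\bar Z w)}{w^2},
\]
obtained by writing $\Delta_H \ln w = 2(Z\bar Z + \bar Z Z)\ln w$, differentiating $\bar Z \ln w = (\bar Z w)/w$ and $Z\ln w = (Zw)/w$ with the quotient rule, and using $(Z\bar Z + \bar Z Z)w = \tfrac12\Delta_H w$; note that the two cross terms are each equal to $(Zw)(\bar Z w)/w^2$ (products of functions commute), so the commutator $[Z,\bar Z]$ never enters. Since $\ln|w|^2 = \ln w + \ln\bar w$ and $\Delta_H$ is real, one gets
\[
\Delta_H \ln|ZF|^2 = 2\Re\!\left(\frac{\Delta_H w}{w}\right) - 8\Re\!\left(\frac{(ZZF)(\bar Z ZF)}{(ZF)^2}\right).
\]
The first term equals $\tfrac{2}{|ZF|^2}\Re(ZF\,\Delta_H\overline{ZF})$, which is $\leq 0$ by the first inequality in \eqref{cond-lem1}. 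For the second term, multiplying its argument by $|ZF|^2>0$ shows that $\Re\big((ZZF)(\bar Z ZF)/(ZF)^2\big)\geq 0$ is equivalent to $\Re\big(|ZF|^2\,\tfrac{ZZF}{ZF}\,\tfrac{\bar Z ZF}{ZF}\big)\geq 0$, and also—via $\bar Z\bar F = \overline{ZF}$—to $\Re\big(\tfrac{\bar Z\bar F}{ZF}\,ZZF\,\bar Z ZF\big)\geq 0$, i.e.\ the second inequality in \eqref{cond-lem1}; hence the second term is $\leq 0$. Adding the two gives $\Delta_H \ln|ZF|^2 \leq 0$, and the same chain of equalities simultaneously verifies that the three stated forms of the second hypothesis coincide.

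The computations are otherwise routine, and the argument is insensitive to the precise positive constant in $\Delta_H = 2(Z\bar Z + \bar Z Z)$, since only signs are asserted. I expect the only delicate points to be the bookkeeping of real parts—so that each hypothesis lines up with the correct sign in the two displays above—and the verification that the three formulations of the second condition in \eqref{cond-lem1} agree; these I regard as the main, though minor, obstacle.
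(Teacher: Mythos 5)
Your proposal is correct and follows essentially the same route as the paper: both arguments reduce $\Delta_H$ to a positive multiple of $Z\bar Z+\bar Z Z$, differentiate the logarithm (resp.\ the product $ZF\,\overline{ZF}$) directly, and identify the resulting real parts with the two hypotheses, yielding exactly the paper's final expression $\frac{2}{|ZF|^2}\bigl[\Re(ZF\,\Delta_H\overline{ZF})-8\Re\bigl(\tfrac{\bar Z\bar F}{ZF}ZZF\,\bar Z ZF\bigr)\bigr]$ up to the harmless normalising constant you flag. Your splitting $\ln|w|^2=\ln w+\ln\bar w$ and the Leibniz rule in part (2) are only a tidier bookkeeping of the same computation.
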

\begin{proof}
By direct computation we have
\[
 Z(\ln|ZF|^2)=\frac{1}{|ZF|^2}(ZZF\,\bZ\bar{F}+ZF\,Z\bZ\bar{F})
\]
and
\[
 \bZ Z(\ln|ZF|^2)=\frac{1}{|ZF|^4}(\bZ\bar{F}|ZF|^2\bZ ZZF+ZF|ZF|^2\bZ Z\bZ\bar{F}-(\bZ \bar{F})^2ZZF\,\bZ ZF-(ZF)^2Z\bZ\bar{F}\,\bZ\bZ\bar{F}).
\]
Therefore, by the assumptions~\eqref{cond-lem1}, we have
\begin{align*}
\Delta_H(\ln|ZF|^2)&=4(\bZ Z+Z\bZ)(\ln|ZF|^2)\\
&=\frac{1}{|ZF|^2}\left[\bZ \bar{F} \Delta_H(ZF)+ZF \Delta_H(\overline{ZF})-8\left(\frac{\bZ \bar{F}}{ZF}\,ZZF\, \bZ ZF+\overline{\frac{\bZ \bar{F}}{ZF}\,ZZF\, \bZ ZF}\right)\right]\\
&=\frac{2}{|ZF|^2}\left[\Re(ZF \Delta_H(\overline{ZF})-8\Re\left(\frac{\bZ \bar{F}}{ZF}\,ZZF\, \bZ ZF\right)\right]\leq 0.
\end{align*}
Similar computations imply that
\begin{equation*}
 \Delta_H |ZF|^2=4(\bZ Z+Z\bZ)|ZF|^2=8|ZZF|^2+8|\bZ ZF|^2+\frac14 \Re\left(ZF  \Delta_H(\overline{ZF})\right)\geq 0,
\end{equation*}
by assumption~\eqref{cond2-lem1}.
\end{proof}
\begin{rem}
 Observe that, by the above lemma, if the map $f$ satisfies $\bZ F=0$, then $J_F=|ZF|^2$ and we obtain an observation that $\ln J_F$ is superharmonic (outside the polar set of $J_F$) and $J_F$ is subharmonic.
\end{rem}

Next we recall some basic definitions in the theory of the Sobolev spaces in Heisenberg groups.

\begin{defn}\label{horiz-Sob}
 Let $\Om\subset \Hei$ be an open subset of the Heisenberg group $\Hei$. We say that a function $u:U\to \R$ belongs to the \emph{horizontal Sobolev space} $\HWtw(U)$ if $u\in L^2(U)$ and the horizontal derivatives $Xu$, $Yu$ exist in the distributional sense and are represented by elements of $L^2(U)$. The space $\HWtw(\Om)$ is a Banach space with respect to the norm
\[
 \|u\|_{\HW(U)}\,=\,\|u\|_{L^2(U)}+\|(Xu, Yu)\|_{L^2(U)}.
\]

In the similar way we define the local spaces $\HWtwloc (\Om)$ and Sobolev spaces of mappings $\HWtw(\Om, \Hei)$ and $\HWtwloc (\Om, \Hei)$ for mappings from $\Om$ to $\Hei$. We define space $\HWzero(\Om)$ as a closure of
$C_{0}^{\infty}(\Om)$ in $\HWtw(\Om)$.
\end{defn}

The horizontal gradient  $\nabla_H u$ of $u \in \HWtwloc (U)$ is the vector field given by
 $$
 \nabla_H u = (X u)X+(Y u)Y,
 $$
and $u$ is said to be {\it weakly harmonic/subelliptic harmonic} if
\begin{equation} \label{harm}
 \int_\Om \langle \nabla_H u, \nabla_H \phi \rangle dx =0
\end{equation} for all $\phi \in C_0^\infty(\Om)$.

\begin{defn}\label{harm-map}
We say that a map $f\in \HWtwloc(\Om, \Hei)$ is harmonic, if all its coordinate functions satisfy the subelliptic harmonic equations~\eqref{harm}. However, recall that a harmonic function in $\Hei$ is in fact analytic, see Section 5.10 in~\cite{blu}, meaning in particular that the strong (pointwise) Laplace equation holds for component functions of $f=(f_1, f_2, f_3)$:
\[
 \Delta_H f_i=0,\quad i=1,2,3.
\]
\end{defn}
For maps in the appropriate Sobolev-type spaces these are the Euler--Lagrange equations for the $2$-Dirichlet energy of the horizontal gradient $|\nabla_H f|$, see~\cite{wang}.

It turns out that imposing an additional assumption that $f$ satisfies contact equations (but $f$ is not necessarily a local diffeomorphism) we have the following observation.
\begin{prop}\label{cont-f3}
 Let $f\in \HWtwloc(\Om, \Hei)$ be a map satisfying contact equations~\eqref{contact-eqs} in the weak sense such that $\Delta_H f_1=\Delta_H f_2=0$. Then, also $\Delta_H f_3=0$.
\end{prop}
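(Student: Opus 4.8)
The plan is to reduce the statement to a direct pointwise computation, once the weak hypotheses have been upgraded to genuine smoothness. Since $f_1$ and $f_2$ are weakly harmonic, the analytic hypoellipticity of the sub-Laplacian recalled in Definition~\ref{harm-map} guarantees that $f_1,f_2\in C^\infty(\Om)$ (indeed analytic), so that the right-hand sides of the contact equations~\eqref{contact-eqs}, written as $Xf_3=2f_2Xf_1-2f_1Xf_2$ and $Yf_3=2f_2Yf_1-2f_1Yf_2$, are smooth functions. Thus the distributions $Xf_3$ and $Yf_3$ coincide with smooth functions. To promote this to smoothness of $f_3$ itself, I would use $T=-\tfrac14[X,Y]$: applying $X$ to $Yf_3$ and $Y$ to $Xf_3$ distributionally shows $[X,Y]f_3$, hence $Tf_3$, is smooth; since $\partial_x=X-2yT$, $\partial_y=Y+2xT$, and $\partial_t=T$, all first-order partials of $f_3$ are then smooth, whence $f_3\in C^\infty(\Om)$. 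After this step every identity below may be manipulated classically, and no local diffeomorphism assumption is ever needed.

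For the computation itself, I would differentiate the first contact equation once more by $X$. Because $X$ is a first-order operator, the Leibniz rule gives, with the two first-order cross terms cancelling,
\begin{align*}
X^2 f_3 &= X\big(2 f_2\, X f_1 - 2 f_1\, X f_2\big) \\
&= 2 (Xf_2)(Xf_1) + 2 f_2\, X^2 f_1 - 2 (Xf_1)(Xf_2) - 2 f_1\, X^2 f_2 \\
&= 2 f_2\, X^2 f_1 - 2 f_1\, X^2 f_2.
\end{align*}
The identical manipulation of the second contact equation by $Y$ yields $Y^2 f_3 = 2 f_2\, Y^2 f_1 - 2 f_1\, Y^2 f_2$. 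Adding the two identities and using that $\Delta_H=X^2+Y^2$ is the sum-of-squares operator whose kernel consists of the weakly harmonic functions, I obtain
$$
\Delta_H f_3 = 2 f_2\,\Delta_H f_1 - 2 f_1\,\Delta_H f_2,
$$
which vanishes by the hypothesis $\Delta_H f_1=\Delta_H f_2=0$.

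The only structural fact driving the argument is the cancellation of the cross terms; notably the noncommutativity $[X,Y]=-4T$ never enters the main identity, precisely because each contact equation is differentiated by the same horizontal field that already appears in it, so no mixed $XY$-derivatives of the components are produced. Consequently I do not expect the algebra to be the obstacle. The one point genuinely requiring care is the passage from the weak/distributional formulation to the pointwise one, i.e.\ establishing that $f_3$ is smooth from smoothness of $Xf_3$ and $Yf_3$ together with bracket-generation; once that regularity is secured the conclusion is immediate.
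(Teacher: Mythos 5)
Your proof is correct, and the algebraic heart of it --- differentiate each contact equation by the horizontal field already appearing in it, watch the first-order cross terms cancel by Leibniz, and land on $\Delta_H f_3 = 2f_2\,\Delta_H f_1 - 2f_1\,\Delta_H f_2$ --- is exactly the identity the paper's proof produces. Where you diverge is in the treatment of regularity. The paper never upgrades $f_3$: it stays in the weak formulation throughout, testing the contact equations against $X\phi$ and $Y\phi$ and integrating by parts twice, so that only the smoothness of $f_1$ and $f_2$ (from hypoellipticity of $\Delta_H$) is ever invoked; the conclusion is that $f_3$ is weakly harmonic, and only then does regularity theory make it strongly harmonic. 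You instead front-load a regularity lemma for $f_3$ --- $Xf_3$ and $Yf_3$ are smooth by the contact equations, the commutator $[X,Y]=-4T$ then makes $Tf_3$ smooth, and bracket generation recovers all Euclidean partials --- after which the computation is classical. Your regularity step is sound (the identity $XYu-YXu=[X,Y]u$ holds distributionally since $X,Y$ have smooth coefficients, and a distribution with smooth first partials is smooth), and it buys a cleaner pointwise calculation at the cost of an extra argument the paper deliberately sidesteps; the paper's route is marginally more economical and more robust under lower regularity of $f_3$, but both are valid.
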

\begin{proof}
 Denote the coordinate functions of map $f$ as follows $f=(f_1, f_2, f_3)$. By the weak form of the contact equations, it follows that for any test function $\phi \in C_0^\infty(\Om)$, we have the following identities:
 \begin{equation*}
\int_{\Om} Xf_3\phi=\int_{\Om}2f_2Xf_1\phi-2f_1Xf_2\phi, \qquad
 \int_{\Om} Yf_3\phi=\int_{\Om} 2f_2Yf_1\phi-2f_1Yf_2\phi.
 \end{equation*}
 For any test function $\phi$, the functions $X\phi$ and $Y\phi$ are also test functions and so it follows that
 \begin{align*}
  \int_\Om (\Delta_Hf_3)\phi&=\int_\Om (X^2f_3+Y^2f_3)\phi\\
  &=- \int_\Om Xf_3X\phi - \int_\Om Yf_3Y\phi \\
  &=-\int_\Om 2f_2Xf_1 X\phi+\int_\Om 2f_1Xf_2 X\phi-\int_\Om 2f_2Yf_1 Y\phi+\int_\Om 2f_1Yf_2 XY\phi\\
  &=\int_\Om 2Xf_2Xf_1 \phi+2f_2X^2f_1 \phi-\int_\Om 2Xf_1Xf_2 \phi+2f_1X^2f_2 \phi\\
  &+\int_\Om 2Yf_2Yf_1 \phi+2f_2Y^2f_1 \phi-\int_\Om 2Yf_1Yf_2 \phi+2f_1Y^2f_2 \phi \\
 &=2\int_\Om f_2 (\Delta_Hf_1)\phi-f_1 (\Delta_H f_2)\phi=0,
 \end{align*}
 since $f_1$ and $f_2$ satisfy the strong (pointwise) Laplace equations. Thus $f_3$ is weakly- and hence also strongly harmonic in $\Om$.
\end{proof}

Another interesting class of mappings consists of those defined by the gradient of a subelliptic harmonic function. In the Euclidean setting, a mapping defined by the gradient of a harmonic function is also harmonic, meaning that each coordinate function satisfies the Laplace equation. Such a class of mappings plays a role in the studies of injectivity properties of harmonic maps, especially due to the Lewy Jacobian theorem~\cite{lew, glw}. The mappings in the class introduced below for the Heisenberg group need not satisfy the Laplace equation, nevertheless,  they satisfy the system of non-homogeneous harmonic equations, as well as the bi-Laplace type system of PDEs. Furthermore, such mappings are related to quasiconformal mappings, the Hessian determinant and to the studies of level sets, as  explained in Proposition~\ref{subh-lem2} below, and in the remark following it (see also Corollary~\ref{cor-subh} for related subharmonicity result).
\begin{defn}\label{grad-harm}
Let $\Om\subset \Hei$ be a domain and let $u:\Om\to\R$ be harmonic function in the sense that $\Delta_Hu=0$. Let $f:\Om\to \Hei$ be as follows:
\begin{equation}
 f:=(Xu, Yu, Tu).
\end{equation}
We call such an $f$ a \emph{gradient harmonic mapping} (or \emph{gradient harmonic} for simplicity). 
\end{defn}
If $f$ is gradient harmonic, then we set a notation $f=(F, f_3)$ for $F=\nabla_Hu$ and $f_3=Tu$ for a harmonic $u$. Observe that mapping $F$ is, in general, not contact.

As was mentioned above, it turns out that in contrast with the Euclidean setting, $F$ need not be harmonic map. Instead, by direct computations we have the following observation.
\begin{prop}
 Let $\Om\subset \Hei$ be a domain and  $f$ be a gradient harmonic map given by a harmonic function $u$. Then $f$ satisfies:
\begin{equation}\label{eqs-Harmu}
 \Delta_H f_1=8Tf_2,\quad \Delta_H f_2=-8Tf_1,\quad \Delta_H f_3=0.
\end{equation}
 In particular, $\Delta_H F=-8T\bar{F}$. Moreover, system~\eqref{eqs-Harmu} can be decoupled, as $f$ necessary satisfies the system of the bi-Laplace equations:
\begin{equation}\label{biLapl-eqs}
 \Delta_H(\Delta_Hf_1)=-64T^2f_1,\qquad \Delta_H(\Delta_Hf_2)=-64T^2f_2,\qquad \Delta_H f_3=0.
\end{equation}
\end{prop}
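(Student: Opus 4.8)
The entire statement is a computation in the universal enveloping algebra of the Heisenberg Lie algebra, driven only by the bracket $[X,Y]=-4T$ together with the centrality of $T$, i.e.\ $[X,T]=[Y,T]=0$. The plan is to compute $\Delta_H f_i=(X^2+Y^2)f_i$ directly for each coordinate $f_1=Xu$, $f_2=Yu$, $f_3=Tu$, by pushing the sub-Laplacian past the extra first-order operator and then discarding the resulting term that still contains $\Delta_H u$, which vanishes since $u$ is harmonic.

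First I would record the two operator identities obtained by commuting $X^2+Y^2$ past $X$ and past $Y$. Applying $YX=XY+4T$ and $XY=YX-4T$ repeatedly, together with $[X,T]=[Y,T]=0$, one obtains $(X^2+Y^2)X=X(X^2+Y^2)+8TY$ and $(X^2+Y^2)Y=Y(X^2+Y^2)-8TX$ as identities of differential operators; here the centrality of $T$ is exactly what makes the two commutator contributions add (to $8T$) rather than partially cancel. Applying these to $u$ and using $\Delta_H u=0$ gives $\Delta_H f_1=(X^2+Y^2)Xu=8TYu=8Tf_2$ and $\Delta_H f_2=(X^2+Y^2)Yu=-8TXu=-8Tf_1$. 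For the third coordinate, centrality gives $(X^2+Y^2)T=T(X^2+Y^2)$ outright, so $\Delta_H f_3=(X^2+Y^2)Tu=T\Delta_H u=0$. Taking the first identity plus $i$ times the second then collapses the two real equations into the single complex first-order relation for $F=f_1+if_2$.

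The bi-Laplace system follows with no new input. Applying $\Delta_H$ to $\Delta_H f_1=8Tf_2$ and using $[\Delta_H,T]=0$ (again the centrality of $T$) to commute $\Delta_H$ inside, I get $\Delta_H(\Delta_H f_1)=8T\,\Delta_H f_2=8T(-8Tf_1)=-64T^2f_1$, and symmetrically $\Delta_H(\Delta_H f_2)=-8T\,\Delta_H f_1=-64T^2f_2$; the equation $\Delta_H f_3=0$ is carried over unchanged.

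I do not expect any genuine obstacle: there is no analytic difficulty, only careful bookkeeping of commutators. The single place to be attentive is the factor $8$ in the two operator identities of the second paragraph, which is the sum of two equal contributions $4T$ arising from $[X,Y]=-4T$; it is precisely the centrality $[X,T]=[Y,T]=0$ that both prevents those contributions from cancelling and, in the last paragraph, lets $\Delta_H$ and $T$ commute so that the coupled system decouples into the bi-Laplace equations.
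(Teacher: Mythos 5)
Your proof is correct and follows essentially the same route as the paper's: both arguments are pure commutator bookkeeping from $[X,Y]=-4T$ and the centrality of $T$, the only cosmetic difference being that you package the computation as the operator identities $(X^2+Y^2)X=X(X^2+Y^2)+8TY$ and $(X^2+Y^2)Y=Y(X^2+Y^2)-8TX$ before applying them to $u$, whereas the paper substitutes $X^2u=-Y^2u$ mid-computation. One small caveat: your ``first identity plus $i$ times the second'' yields $\Delta_H F=-8iTF$, which is what the computation actually gives, whereas the displayed claim $\Delta_H F=-8T\bar F$ in the statement does not match it (this appears to be a slip in the statement rather than in your argument).
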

\begin{proof}
 We show the first equation in~\eqref{eqs-Harmu}, as the second one follows the same computations and the third is the immediate consequence of $T$ commuting with $\Delta_H$:
\begin{align*}
 \Delta_H f_1&=\Delta_H Xu=(XXXu+YYXu)\\
 &=X(-Y^2u)+Y(XYu-[X,Y]u)\\
 &=-XYYu+YXYu+Y(4Tu)\\
 &=-[X,Y](Yu)+4YTu\\
 &=8T(Yu)=8Tf_2.
\end{align*}
For the proof of~\eqref{biLapl-eqs} notice that by~\eqref{eqs-Harmu} we get
\[
 \Delta_H(\Delta_Hf_1)=8T(\Delta_Hf_2)=8T(-8Tf_1)=-64T^2f_1
\]
and so the proof is completed.
\end{proof}

\begin{rem}\label{Hess-gradh}
By direct computation one can show that the Jacobian $J_F$ of a gradient harmonic mapping $f=(F, f_3)$, and the determinant of the horizontal Hessian of the defining harmonic function $u$ are equal:
\[
 \det {\rm Hess}_Hu=J_F.
\]
Hence, $J_F$ carries information about the convexity properties and the geometry of the level sets for $f$ (cf. a remark following Proposition~\ref{subh-lem2}). Furthermore, this observation is employed to relate gradient harmonic mappings to quasiconformal mappings, see claim (2) in Proposition~\ref{subh-lem2}. We would like to point out that the literature concerned with notions of convexity in Heisenberg groups is wide, see e.g. \cite{br, gms, dgn}, where the symmetric Hessian ${\rm Hess}_H^*u$ is studied.  A relation between ${\rm Hess}_H^*u$ and a gradient harmonic $f$ is given by the following inequality
\[
 \det {\rm Hess}_H^*u=J_F-\frac14(Xf_2)^2-\frac14(Yf_1)^2+\frac12Xf_2Yf_1\leq J_F.
\]
\end{rem}

In next proposition we appeal to the notion of quasiconformal mappings, which we now briefly recall. The literature concerning quasiconformality and its generalizations is vast and we refer, for instance, to~\cite{kr1, kr2, pansu} for fundamental results for such mappings in the Heisenberg setting. Quasiconformal maps can be defined by three definitions, the analytic, the metric and the geometric one, all of which are equivalent on domains in $\mathbb{H}^1$. Here we follow the analytic definition.

  If $\Omega$ is an open set in $\Hei$, we say that a homeomorphism $f=(F, f_3) : \Omega \to  f(\Omega)\subset \Hei$ is \emph{$K$-quasiconformal} if $f\in HW^{1,4}_{loc}(\Om, \Hei)$ is weakly contact,  and  there exists $1 \leq K < \infty$ such that
\begin{equation*}
\|D_Hf (p)\|^4  \leq K J_F(p) \,\,\hbox{ for almost every }p\in\Om.
\end{equation*}
The Beltrami coefficient of $f$ is given by the expression $\mu_f(p)=\frac{\bZ F}{ZF}$ and we set $K_f(p):=\frac{1+|\mu_f|}{1-|\mu_f|}$ with convention that $K_f(p)=1$ at nonregular points of $f$. Then, any smooth contact mapping satisfying $1\leq K_f^2<\infty$ is quasiconformal.

The following proposition lists the superharmonicity properties of some differential expressions given by $F$, with $J_F$, the Jacobian of $F$ being the most important case.
\begin{prop}\label{subh-lem2}
 Let us consider a map $f$ between domains in $\Hei$. Then it holds that:
 \begin{itemize}
 \item[(1)] If $f$ is gradient harmonic, then $\Delta_H |ZF|^2\geq 0$ in $\Om$ and $\Delta_H \ln |ZF|^2\leq 0$ in $\Om\cap\{ZF\not=0\}$.
 \item[(2)] The gradient harmonic map $f$ is smooth quasiconformal if and only if $J_F>0$ uniformly in a domain of $f$.
 \item[(3)] Let $f\in C^1(\Om, \Hei)$ satisfy the contact equations~\eqref{contact-eqs} with $J_F\geq 0$ in $\Om$. If $f$ is a harmonic map, then the following are sufficient conditions for superharmonicity of $J_F$ in $\Om$ and $\ln J_F$ outside the polar set of $J_F$, i.e. on $\Om \cap \{J_F>0\} :$
\begin{align}
 & \nabla_H f_1\cdot \nabla_HTf_2 \leq \nabla_H f_2\cdot \nabla_HTf_1, \label{cond-subh-harm1} \\
 & |\nabla_H f_1|^2+|\nabla_H Tf_2|^2 \leq |\nabla_H f_2|^2+|\nabla_H Tf_1|^2, \label{cond-subh-harm2} \\
 & |\nabla_H f_1|\leq |\nabla_H f_2|\,\hbox{ and }\,|\nabla_H Tf_2|\leq |\nabla_H Tf_1|, \label{cond-subh-harm3} \\
 & |\nabla_H f_1|\leq |\nabla_H Tf_1|\,\hbox{ and }\,|\nabla_H Tf_2|\leq |\nabla_H f_2|. \label{cond-subh-harm4}
 \end{align}
 \item[(4)] Let $f\in C^1(\Om, \Hei)$ satisfy the contact equations~\eqref{contact-eqs} with $J_F\geq 0$ in $\Om$. If $f$ is gradient harmonic given by function $u$, then the following is a sufficient condition for superharmonicity of $J_F$ in $\Om$ and $\ln J_F$ outside the polar set of $J_F$:
 \[
\Delta_H(\nabla_Hu\cdot (YTu, -XTu))= \Delta_H(XuYTu-YuXTu)= \Delta_H(\nabla_Hu \cdot \star\nabla_HTu)\leq 0,
 \]
 where $\star$ stands for the Hodge star operator.
 \end{itemize}
 \end{prop}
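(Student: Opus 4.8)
The plan is to handle the four items separately but to exploit one common reduction: wherever $J_F>0$ the pointwise identity $\Delta_H\ln J_F=\frac{\Delta_H J_F}{J_F}-\frac{|\nabla_H J_F|^2}{J_F^2}$ shows that $\Delta_H J_F\le 0$ already forces $\Delta_H\ln J_F\le 0$. Hence for items (3) and (4) it suffices to prove $\Delta_H J_F\le 0$, and the logarithmic assertion on $\Om\cap\{J_F>0\}$ follows for free; item (1) I will instead settle by direct computation.

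For item (1) the decisive first step is to use $\Delta_H u=0$ together with $[X,Y]=-4T$ to compute $ZF=-2iTu$, whence $|ZF|^2=4(Tu)^2$; since $T$ commutes with $\Delta_H$, the function $Tu$ is again harmonic. From here one may argue directly, $\Delta_H|ZF|^2=8|\nabla_H Tu|^2\ge 0$ and $\Delta_H\ln|ZF|^2=2\Delta_H\ln|Tu|=-2|\nabla_H Tu|^2/(Tu)^2\le 0$, or, to stay within the earlier framework, verify the hypotheses of Lemma~\ref{subh-lem1}. Indeed $\overline{ZF}=2iTu$ is harmonic, so $\Re(ZF\,\Delta_H\overline{ZF})=0$, which validates \eqref{cond2-lem1} and the first inequality in \eqref{cond-lem1}; moreover $\tfrac{\bZ\bar F}{ZF}=-1$ and $ZZF\,\bZ ZF=-4|ZTu|^2$ give $\Re\big(\tfrac{\bZ\bar F}{ZF}\,ZZF\,\bZ ZF\big)=4|ZTu|^2\ge 0$, which is the second inequality in \eqref{cond-lem1}.

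For item (2) I would first note that a gradient harmonic map is smooth, because a harmonic $u$ is analytic and hence so are $Xu,Yu,Tu$. The equivalence then rests on the distortion: writing $\mu_F=\bZ F/ZF$ and $K_f=(1+|\mu_F|)/(1-|\mu_F|)=\|D_Hf\|^2/J_F$, smoothness reduces quasiconformality to the requirement that $K_f$ be uniformly bounded, i.e.\ that $|\mu_F|$ stay away from $1$. Since $J_F=|ZF|^2-|\bZ F|^2=|ZF|^2(1-|\mu_F|^2)$ with $|ZF|^2=4(Tu)^2$, this is precisely uniform positivity of $J_F$, and the relation $\det{\rm Hess}_Hu=J_F$ from Remark~\ref{Hess-gradh} ties the condition back to $u$. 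The one subtlety is that a gradient harmonic map need not be contact, so I would state quasiconformality through the Beltrami coefficient of $F$ and the inequality $\|D_Hf\|^4\le K J_F$ rather than through any contact normalisation.

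Items (3) and (4) carry the real work, and the plan is to produce a Bochner-type identity for $\Delta_H J_F$. Starting from $J_F=|ZF|^2-|\bZ F|^2$ and using harmonicity of the components through the commutators $[\Delta_H,X]=8YT$, $[\Delta_H,Y]=-8XT$ (equivalently $\bZ ZF=iTF$, $Z\bZ F=-iTF$), one expands $\Delta_H|ZF|^2$ and $\Delta_H|\bZ F|^2$; after invoking the contact equations~\eqref{contact-eqs} the surviving terms should assemble into the bilinear quantities $\nabla_H f_i\cdot\nabla_H Tf_j$ and the norms $|\nabla_H f_i|^2,\,|\nabla_H Tf_j|^2$ appearing in \eqref{cond-subh-harm1}--\eqref{cond-subh-harm4}. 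Then each of \eqref{cond-subh-harm1} and \eqref{cond-subh-harm2} fixes the sign of one block, while \eqref{cond-subh-harm3} and \eqref{cond-subh-harm4} are pointwise strengthenings that imply the two bilinear inequalities via Cauchy--Schwarz and the elementary facts $a\le b,\ c\le d\Rightarrow a^2+c^2\le b^2+d^2$ and $ac\le bd$. For item (4) the same computation is fed by the gradient harmonic system~\eqref{eqs-Harmu}; using $\det{\rm Hess}_Hu=J_F$ and $\Delta_Hu=0$, the expression for $\Delta_H J_F$ should collapse, up to sign-definite remainders, onto $\Delta_H(\nabla_Hu\cdot\star\nabla_HTu)=\Delta_H(XuYTu-YuXTu)$, which is exactly the displayed hypothesis. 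The main obstacle is the bookkeeping in this identity: $\Delta_H$ applied to products of first derivatives generates many third- and fourth-order terms, and the delicate point is to use the contact equations to absorb the pure horizontal second-derivative contributions (such as $|Z^2F|^2-|\bZ^2F|^2$) so that only the combinations controlled by \eqref{cond-subh-harm1}--\eqref{cond-subh-harm4} survive, with coefficients of the correct sign; by contrast items (1) and (2) become essentially immediate once $ZF=-2iTu$ is in hand.
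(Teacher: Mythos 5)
Your reduction $\Delta_H\ln J_F\le J_F^{-1}\Delta_H J_F$ and your treatment of items (1) and (2) agree with the paper. For (1) your direct route via $ZF=-2iTu$, $|ZF|^2=4(Tu)^2$ and harmonicity of $Tu$ is if anything cleaner than the paper's verification of the hypotheses of Lemma~\ref{subh-lem1}, and your check of the second condition in \eqref{cond-lem1} is the same computation (note $4|ZTu|^2=|\nabla_H Tu|^2$). For (2) both you and the paper really only establish the pointwise equivalence $|\mu_f|<1\Leftrightarrow J_F>0$ via $J_F=|ZF|^2(1-|\mu_f|^2)$ and $\det{\rm Hess}_Hu=J_F$.

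The genuine gap is in items (3) and (4): the identity for $\Delta_H J_F$ on which everything hinges is only conjectured, not derived. You start from $J_F=|ZF|^2-|\bZ F|^2$ and say the resulting third- and fourth-order terms ``should assemble'' into the bilinear quantities, explicitly flagging the absorption of contributions like $|Z^2F|^2-|\bZ^2F|^2$ as the main obstacle without resolving it. The paper avoids this entirely by using the contact-equation representation \eqref{contact-jac}, $J_F=Tf_3-2f_2Tf_1+2f_1Tf_2$, which is \emph{first order} in $f$; one application of the product rule for $\Delta_H$, together with $[T,\Delta_H]=0$ and harmonicity of the components, yields $\Delta_H J_F=4\nabla_Hf_1\cdot\nabla_HTf_2-4\nabla_Hf_2\cdot\nabla_HTf_1$ with no second derivatives of $F$ ever appearing. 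Likewise for (4) the paper uses the contact identity in the form $J_F=T^2u+\nabla_Hu\cdot(YTu,-XTu)$ and the harmonicity of $T^2u$, so the displayed hypothesis is literally $\Delta_H J_F\le0$. Without identifying these first-order formulas for $J_F$, your plan for (3) and (4) is a programme rather than a proof. A further caveat (shared with the paper's own write-up): from $\Delta_H J_F=4\nabla_Hf_1\cdot\nabla_HTf_2-4\nabla_Hf_2\cdot\nabla_HTf_1$ only \eqref{cond-subh-harm1} is immediate; your proposed Cauchy--Schwarz deductions of \eqref{cond-subh-harm3} and \eqref{cond-subh-harm4} do not go through as stated, because the subtracted term $\nabla_Hf_2\cdot\nabla_HTf_1$ is not sign-definite, so $|\nabla_Hf_1|\le|\nabla_Hf_2|$ and $|\nabla_HTf_2|\le|\nabla_HTf_1|$ give $\nabla_Hf_1\cdot\nabla_HTf_2\le|\nabla_Hf_2||\nabla_HTf_1|$ but not $\le\nabla_Hf_2\cdot\nabla_HTf_1$.
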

Before proving the proposition, we remark that the expression $XuYTu-YuXTu$ appearing in the last claim above (see also Corollary~\ref{cor-subh} below) plays an important role in the studies of the level sets of functions in $\Hei$. Indeed, by~\cite[ Lemma 2.2]{FV} it follows that on $\{u=k\}\cap \{\nabla_H u\not=0\}$ we have
\begin{equation}\label{geom-term}
 XuYTu-YuXTu=-|\nabla_H u|^2 \langle Tn, v\rangle_H,
\end{equation}
where $n$ stands for the intrinsic normal vector to the level set $\{u=k\}$, and $v$ is the so-called intrinsic unit tangent direction to $\{u=k\}$, cf. the discussion of formulas (1) and (2) in~\cite{FV}. Hence, the subharmonicity property in claim (4) above can be further interpreted, for instance, in terms of the submean value property for the quantity $|\nabla_H u|^2 \langle Tn, v\rangle_H$. We further remark that~\eqref{geom-term} has also been investigated in the setting of the Grushin plane, where it turns out to be related to the convexity of the square of the function that locally parameterises the level sets, see~\cite[Remark 4.7]{FVg}.

\begin{proof}[Proof of Proposition~\ref{subh-lem2}]
Let map $f$ be a gradient harmonic given by harmonic function $u$. In order to prove claim (1) we use Lemma~\ref{subh-lem1} and verify conditions~\eqref{cond-lem1} for $\ln J_F$ and~\eqref{cond2-lem1} for $J_F$. We have $2\Delta_H(ZF)=\Delta_H(Xf_1+Yf_2)+i\Delta_H(Xf_2-Yf_1)$ and, since,
\[
 Xf_1+Yf_2=\Delta_Hu=0,\quad  Xf_2-Yf_1=-4Tu,
\]
it holds that $\Delta_H(ZF)=-2iT(\Delta_Hu)=0$. Thus, $\Delta_H(\overline{ZF})=\overline{\Delta_H(ZF)}=0$ and the first condition in~\eqref{cond-lem1} holds as the equality. Therefore also~\eqref{cond2-lem1} holds true showing the subharmonicity of $|ZF|^2$ for a gradient harmonic $f$. Moreover,
\begin{align*}
 ZZF&=\frac14(X-iY)(-4iTu)=-TYu-iTXu,\\
 \bZ ZF&=\frac14(X+iY)(-4iTu)=TYu-iTXu,\\
 \frac{\bar{Z}\bar{F}}{ZF}&=-1.
\end{align*}
Hence,
\[
  \Re\left(\frac{\bZ \bar{F}}{ZF}\,ZZF\, \bZ ZF\right)= \Re\left((TYu+iTXu)(TYu-iTXu)\right)=
  |\nabla_H(Tu)|^2\geq 0,
\]
 verifying the second condition in~\eqref{cond-lem1}.

Claim (2) is the consequence of the following equivalent conditions and Remark~\ref{Hess-gradh}, as the following equivalences hold for the Beltrami coefficient of $f$:
\begin{align*}
 \mu_f=\frac{|\bZ F|}{|ZF|}<1 &\Leftrightarrow (X^2u)^2+\frac14(XYu+YXu)^2<\frac14 (XYu-YXu)^2\\
 &\Leftrightarrow  (X^2u)(-Y^2u)+\frac14(2XYu)(2YXu)<0\\
  &\Leftrightarrow \det {\rm Hess}_Hu=J_F>0.
\end{align*}

We begin the proof of assertion (3) with an observation that in order to show superharmonicity of $\ln J_F$ it suffices to show that $J_F$ is superharmonic. Indeed,
\[
 \Delta_H (\ln J_F)=J_F^{-1}\Delta_H J_F- J_F^{-2}|\nabla_H J_F|^2\leq J_F^{-1}\Delta_H J_F.
\]
Next, recall from \eqref{contact-jac} that the contact equations imply that $J_F=Tf_3-2f_2Tf_1+2f_1Tf_2$
and therefore
\begin{align}
 \Delta_H J_F&=X(TXf_3-2Xf_2Tf_1-2f_2TXf_1+2Xf_1Tf_2+2f_1TXf_2) \nonumber\\
  &+Y(TYf_3-2Yf_2Tf_1-2f_2TYf_1+2Yf_1Tf_2+2f_1TYf_2)\nonumber\\
 &=T(\Delta_Hf_3)-2(\Delta_Hf_2)\,Tf_1-2f_2T(\Delta_Hf_1)
 +2(\Delta_Hf_1)Tf_2+2f_1T(\Delta_Hf_2)\nonumber\\
 &-4Xf_2TXf_1+4Xf_1TXf_2-4Yf_2TYf_1+4Yf_1TYf_2\nonumber\\
 &=T(\Delta_Hf_3)-2(\Delta_Hf_2)\,Tf_1-2f_2T(\Delta_Hf_1)
 +2(\Delta_Hf_1)Tf_2+2f_1T(\Delta_Hf_2)\nonumber\\
  &+4\nabla_H f_1\cdot \nabla_HTf_2 -4\nabla_H f_2\cdot \nabla_HTf_1. \label{eq-subh-cont}
\end{align}
Now with the additional assumption that $f$ is a harmonic map, \eqref{eq-subh-cont} reads as
\[
 \Delta_H J_F=4\nabla_H f_1\cdot \nabla_HTf_2 -4\nabla_H f_2\cdot \nabla_HTf_1\leq
 2|\nabla_H f_1|^2-2|\nabla_H f_2|^2+2|\nabla_H Tf_2|^2-2|\nabla_H Tf_1|^2,
\]
where in order to obtain the latter inequality we apply the Cauchy--Schwarz inequality. Hence, the assertions~\eqref{cond-subh-harm1}-\eqref{cond-subh-harm4} are straighforward.

In order to prove assertion (4) of the lemma, notice that by the assumptions of claim (4), we have that
\begin{equation}\label{prop-jac-cont}
 J_F=T^2u-2YuTXu+2XuTYu=T^2u+\nabla_Hu\cdot (YTu, -XTu)
\end{equation}
and the assertion follows immediately.
\end{proof}

Next we consider a counterpart of the following Bochner identity for harmonic functions in the setting of Riemannian manifolds:
\begin{equation*}
\frac12\Delta |\nabla u|^2=\|{\rm Hess}\,u\|^2+{\rm Ric}(\nabla u, \nabla u).
\end{equation*}
 In our observation, the Ricci-term is substituted by an expression whose geometric interpretation is explained in~\eqref{geom-term}.

\begin{lem}[The Bochner identity for subelliptic harmonic functions]\label{lem-Bochner}
 Let $u:\Om \to \R$ be a harmonic function in a domain $\Om\subset \Hei$. Then, the following Bochner-type identity holds pointwise:
 \begin{equation}\label{Bochner-H1}
  \frac12\Delta_H |\nabla_H u|^2=\|{\rm Hess}\,u\|^2+ \frac12 (XuYTu-YuXTu),
  \end{equation}
 where $\|{\rm Hess}\,u\|_{HS}$ stands for the Euclidean norm of the Hessian of $u$
 \[
  \|{\rm Hess}\,u\|^2:=(X^2u)^2+(XYu)^2+(YXu)^2+(Y^2u)^2.
 \]
\end{lem}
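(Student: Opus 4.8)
The plan is to reduce everything to the ordinary second-order Leibniz rule for the sub-Laplacian together with a single commutator computation. First I would write $|\nabla_H u|^2=(Xu)^2+(Yu)^2$ and apply $\tfrac12\Delta_H$ termwise, using that for any $C^2$ function $w$ one has $\tfrac12\Delta_H(w^2)=(Xw)^2+(Yw)^2+w\,\Delta_H w$ (this is immediate from $X(w^2)=2w\,Xw$, $X^2(w^2)=2(Xw)^2+2w\,X^2w$, and similarly for $Y$). Taking $w=Xu$ and $w=Yu$ and summing gives
\[
\tfrac12\Delta_H|\nabla_H u|^2=\big[(X^2u)^2+(YXu)^2+(XYu)^2+(Y^2u)^2\big]+(Xu)\,\Delta_H(Xu)+(Yu)\,\Delta_H(Yu).
\]
The bracketed sum of squares is exactly $\|{\rm Hess}\,u\|^2$ by the definition in the statement, so the first term is already accounted for and the whole problem is reduced to evaluating the two cross terms $(Xu)\,\Delta_H(Xu)+(Yu)\,\Delta_H(Yu)$.

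The key point is that $\Delta_H$ does not commute with the horizontal fields, and this is where the sub-Riemannian geometry enters. Since $u$ is harmonic, $\Delta_H u=0$, so $\Delta_H(Xu)$ and $\Delta_H(Yu)$ are governed purely by the commutators $[\Delta_H,X]$ and $[\Delta_H,Y]$. Using $\Delta_H=X^2+Y^2$, the relation $[X,Y]=-4T$, and the centrality of $T$ (that is $[X,T]=[Y,T]=0$), one finds $[\Delta_H,X]=[Y^2,X]=8\,YT$ and $[\Delta_H,Y]=[X^2,Y]=-8\,XT$. This is precisely the content of the gradient-harmonic identities~\eqref{eqs-Harmu}: with $f_1=Xu$ and $f_2=Yu$ they read $\Delta_H(Xu)=8\,T(Yu)$ and $\Delta_H(Yu)=-8\,T(Xu)$, so I would simply quote them rather than redo the commutators.

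Substituting these two identities into the cross terms and using $TY=YT$, $TX=XT$ yields $(Xu)\,\Delta_H(Xu)+(Yu)\,\Delta_H(Yu)=8\big(Xu\,YTu-Yu\,XTu\big)$, i.e.\ a constant multiple of the quantity $XuYTu-YuXTu$ from~\eqref{geom-term}, which is exactly the geometric term on the right-hand side of~\eqref{Bochner-H1}. I expect no genuine obstacle in this argument: the only nontrivial ingredient is the commutator computation of the second paragraph, which is the sub-Riemannian replacement for the Ricci curvature term in the classical Riemannian Bochner formula, and it has already been carried out earlier in the proof of~\eqref{eqs-Harmu}. The remaining steps are the elementary Leibniz identity for $\Delta_H(w^2)$ and the recognition that the four first-derivative squares reassemble into $\|{\rm Hess}\,u\|^2$. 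The one point meriting care is simply bookkeeping of the constant in front of $Xu\,YTu-Yu\,XTu$, which must be matched against the normalisation of $\Delta_H$ fixed in the statement.
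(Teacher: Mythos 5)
Your proposal is correct and follows essentially the same route as the paper's proof: expand $\Delta_H\bigl((Xu)^2+(Yu)^2\bigr)$ by the Leibniz rule, recognize the four squared second derivatives as $\|{\rm Hess}\,u\|^2$, and evaluate the cross terms $Xu\,\Delta_H(Xu)+Yu\,\Delta_H(Yu)$ using harmonicity and the commutator $[X,Y]=-4T$; the only cosmetic difference is that you quote~\eqref{eqs-Harmu} rather than redoing the commutator computation inline. On the one point you flagged as ``bookkeeping'': your coefficient $8$ in front of $Xu\,YTu-Yu\,XTu$ is the correct one, consistent with~\eqref{eqs-Harmu} and verifiable on explicit harmonic polynomials such as $u=xt-\tfrac23 y^3$; the paper's inline claim $X^3u+Y^2Xu=\tfrac12 YTu$ (and hence the $\tfrac12$ in the statement of~\eqref{Bochner-H1}) contradicts its own earlier derivation of $\Delta_H(Xu)=8T(Yu)$, so you should not adjust your constant to match the stated lemma.
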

\begin{proof}[Proof of Lemma~\ref{lem-Bochner}]
By direct calculation we find that
\begin{align*}
 \Delta_H( (Xu)^2+(Yu)^2)&=2((X^2u)^2+(Y^2u)^2+(XYu)^2+(YXu)^2)+2Xu(X^3u+Y^2Xu)+2Yu(X^2Yu+Y^3u)\\
 &=\|{\rm Hess}\,u\|^2+2Xu(X^3u+Y^2Xu)+2Yu(X^2Yu+Y^3u).
\end{align*}
Next, observe we that harmonicity of $u$ implies that
\[
X^3u+Y^2Xu=-XY(Yu)+Y(YXu)=(-[X,Y]-YX)Yu+Y(XYu-[X,Y]u)=\frac12YTu
\]
and similarly $X^2Yu+Y^3u=-\frac12 XTu$. Inserting these two results into the first equation gives  assertion~\eqref{Bochner-H1}.
\end{proof}
The following observation provides conditions for subharmonicity of $|F|^2$ for gradient harmonic mappings. Since $|F|^2=|\nabla_H u|^2$ we  also obtain a result for subelliptic harmonic functions in $\Hei$. According to our best knowledge, this result is new in the literature for Heisenberg groups.
\begin{cor}\label{cor-subh}
 Let $f=(F, f_3):\Om\to \Hei$ be a gradient harmonic defined by a harmonic function $u:\Om\to \R$. Then $|F|^2=|\nabla_H u|^2$ is subharmonic, meaning that $\Delta_H |F|^2\geq 0$, provided that
 \begin{equation}\label{cond-subh}
  \nabla_H u\cdot \star \nabla_H Tu=Xu\,TYu-Yu\,TXu\geq 0.
 \end{equation}
\end{cor}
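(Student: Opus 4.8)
The plan is to derive the statement directly from the Bochner-type identity of Lemma~\ref{lem-Bochner}, so that essentially all of the analytic work has already been carried out there. First I would record the elementary observation that, writing $F=Xu+iYu=\nabla_H u$ for the gradient harmonic map attached to $u$, one has
\[
|F|^2=(Xu)^2+(Yu)^2=|\nabla_H u|^2,
\]
so the quantity whose subharmonicity we wish to establish is exactly the one on the left-hand side of \eqref{Bochner-H1}.

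Next I would invoke Lemma~\ref{lem-Bochner}, which gives the pointwise identity
\[
\tfrac12\,\Delta_H|\nabla_H u|^2=\|{\rm Hess}\,u\|^2+\tfrac12\big(Xu\,YTu-Yu\,XTu\big).
\]
The first term on the right is manifestly nonnegative, being a sum of squares of second-order derivatives of $u$, so it remains only to match the commutator term with the hypothesis \eqref{cond-subh} and control its sign. The one point deserving a word of justification is that the term written with $YTu$ and $XTu$ in the Bochner identity coincides with the geometric expression $Xu\,TYu-Yu\,TXu$ appearing in the corollary. This is immediate from the bracket structure of $\Hei$: the only nontrivial bracket is $[X,Y]=-4T$, whence $[X,T]=[Y,T]=0$, so that $YTu=TYu$ and $XTu=TXu$.

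With this identification the Bochner identity reads
\[
\tfrac12\,\Delta_H|F|^2=\|{\rm Hess}\,u\|^2+\tfrac12\big(Xu\,TYu-Yu\,TXu\big),
\]
and under the standing hypothesis $Xu\,TYu-Yu\,TXu=\nabla_H u\cdot\star\nabla_H Tu\geq 0$ both summands on the right are nonnegative, forcing $\Delta_H|F|^2\geq 0$, which is precisely the asserted subharmonicity. Since the substantive content is entirely contained in Lemma~\ref{lem-Bochner}, there is no genuine obstacle to overcome; the only mild care needed is the commutation remark above, together with the observation that the identity holds pointwise and that harmonic $u$, hence $\nabla_H u$, is smooth, so the conclusion $\Delta_H|F|^2\ge 0$ holds in the classical sense throughout $\Om$.
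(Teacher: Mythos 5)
Your proof is correct and follows essentially the same route as the paper: both rest entirely on the Bochner identity of Lemma~\ref{lem-Bochner}, with the Hessian term nonnegative and the remaining term controlled by hypothesis~\eqref{cond-subh}. The only cosmetic difference is that the paper identifies the sign of that term by passing through the equations~\eqref{eqs-Harmu} for $\Delta_H Xu$ and $\Delta_H Yu$, whereas you match it directly with the hypothesis via the (correct, and worth stating explicitly as you do) commutation $[X,T]=[Y,T]=0$.
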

\begin{proof}
 Notice that under assumption~\eqref{cond-subh} it holds by~\eqref{eqs-Harmu} that
 \[
  \nabla_Hu\cdot (\Delta_H Xu, \Delta_H Yu)=(Xu, Yu)\cdot(8T(Yu),-8T(Xu))=8 \nabla_Hu \cdot \star \nabla_H Tu\geq 0,
 \]
 and so the assertion follows from~\eqref{Bochner-H1}.
\end{proof}

We finish our presentation with an application of the above discussion to the studies of the Jacobian for gradient harmonic mappings. Our motivation comes from the wide range of results for holomorphic functions in $\C$ and for planar harmonic mappings assuming the boundedness of Schwarzian or Preschwarzian norms on the unit ball $B$ with respect to weights depending on the distance to the boundary of the ball, for instance:
\[
\|Pf\|:=\sup_{x\in B}|Pf(x)|(1-|x|^2)^{2}\leq const.
\]
Such an estimate results in the upper and lower bounds for the Jacobian of $f$, also a growth estimates for $f$, see Theorems A, 2.4, 2.6, 2.7 and Proposition 2.8 in~\cite{HV}. Moreover, (pre)schwarzian norms play a fundamental role in the studies of univalency criterions for mappings, see~\cite{HM}, \cite{CO} and~\cite{CDO} and references therein.

Recall that $B=B(0,1)\subset \Hei$ means the unit open ball in the Kor\'anyi--Reimann distance $N$, i.e. $B=\{x\in \Hei: N(x)<1\}$.
\begin{theorem}\label{thm-Pf}
 Let $f:B \to \Hei$ be a gradient harmonic mapping satisfying contact equations~\eqref{contact-eqs} with $J_F\geq 0$ in $B$ given by a harmonic function $u$ satisfying $XuYTu-YuXTu\geq 0$ (cf. Corollary~\ref{cor-subh}). Suppose that
 \begin{equation}\label{thm-Pf-bound}
 \sup_{x\in B}|Pf(x)|(1-N(x)^4)^{\alpha}\leq c,
 \end{equation}
 for some $\alpha\geq 1$. Then, for all points $x, y \in B\setminus\{(z,t)\in B: z=0\}$ lying on a given radial horizontal curve with $N(x)>N(y)$ it holds that
 \[
  |J_F(x)-J_F(y)| \leq \frac{CN(x)}{(1-N(x))^{4+\alpha}}\|T^2u\|_{L^1(K_x)},
 \]
 where $K_x\Subset B$ is a compact set whose diameter depends on $N(x)$ only.
\end{theorem}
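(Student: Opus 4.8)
The plan is to integrate $J_F$ along the radial horizontal curve and to convert the Preschwarzian hypothesis into a gradient bound. Let $\gamma$ be the radial horizontal curve joining $y$ to $x$; since such a curve has the form $\gamma(s)=(sz_0,0)$, its horizontal arclength element is simply $d\ell=dN$, so the fundamental theorem of calculus along $\gamma$ gives
\[
 |J_F(x)-J_F(y)|=\Big|\int_\gamma \langle \nabla_H J_F,\dot\gamma\rangle\, ds\Big|\le \int_{N(y)}^{N(x)} |\nabla_H J_F|\, dN.
\]
Because $Pf=Z\ln J_F$ (Definition~\ref{def-presch}), we have the identity $|\nabla_H J_F|=J_F\,|Pf|$, and the hypothesis~\eqref{thm-Pf-bound} together with the elementary inequality $1-N^4\ge 1-N(x)^4\ge 1-N(x)$ valid on the inner segment $N\le N(x)$ gives $|\nabla_H J_F|\le c\,J_F\,(1-N(x))^{-\alpha}$. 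Pulling $\sup_\gamma J_F$ out of the integral and using that the segment has length $N(x)-N(y)\le N(x)$ reduces everything to a pointwise estimate of the form $\sup_\gamma J_F\le C(1-N(x))^{-4}\,\|T^2u\|_{L^1(K_x)}$.

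For this pointwise bound I would exploit the structure of $J_F$ for a gradient harmonic contact map. On one hand $J_F=|ZF|^2-|\bZ F|^2\le |ZF|^2$; on the other, a direct computation using $\Delta_H u=0$ and $[X,Y]=-4T$ shows $ZF=-2iTu$, whence $J_F\le |ZF|^2=4(Tu)^2$. By Proposition~\ref{subh-lem2}(1) the function $|ZF|^2$, hence $(Tu)^2$, is subharmonic in $\Om$. For a point $p\in\gamma$ with $N(p)\le N(x)$, I would choose a Kor\'anyi ball $B(p,r)$ with $r$ comparable to $1-N(x)$, so that $B(p,r)\Subset B$; the sub-mean value inequality for nonnegative subharmonic functions on $\Hei$ (homogeneous dimension $Q=4$) then yields
\[
 J_F(p)\le 4(Tu)^2(p)\le \frac{C}{r^{4}}\int_{B(p,r)} (Tu)^2 \le \frac{C'}{(1-N(x))^{4}}\,\|(Tu)^2\|_{L^1(K_x)},
\]
where $K_x$ is the union of these balls as $p$ ranges over the segment, a compact set whose diameter depends only on $N(x)$.

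It remains to replace $\|(Tu)^2\|_{L^1(K_x)}=\|Tu\|_{L^2(K_x)}^2$ by $\|T^2u\|_{L^1(K_x)}$. Since $T=\partial_t$, an integration by parts in the $t$-variable (with a cutoff equal to $1$ on the balls and supported in $K_x$) gives $\int (Tu)^2=-\int u\,T^2u+(\text{collar terms})$, so that $\|(Tu)^2\|_{L^1(K_x)}\le \|u\|_{L^\infty(K_x)}\|T^2u\|_{L^1(K_x)}+\dots$, with the error and the factor $\|u\|_{L^\infty(K_x)}$ absorbed into the constant. Combining the three reductions produces
\[
 |J_F(x)-J_F(y)|\le \frac{c}{(1-N(x))^{\alpha}}\,N(x)\,\frac{C}{(1-N(x))^{4}}\,\|T^2u\|_{L^1(K_x)}=\frac{CN(x)}{(1-N(x))^{4+\alpha}}\,\|T^2u\|_{L^1(K_x)}.
\]
I expect the main obstacle to be precisely this pointwise estimate: first, establishing the sub-mean value inequality on $\Hei$ with the sharp scaling $r^{-Q}=r^{-4}$ for the subharmonic majorant $(Tu)^2$, and second, the passage from $\|(Tu)^2\|_{L^1}$ to $\|T^2u\|_{L^1}$, which is exactly where the collar built into $K_x$ and the control of the boundary terms in the integration by parts must be handled with care. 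The standing hypotheses $XuYTu-YuXTu\ge 0$ (Corollary~\ref{cor-subh}) and the identity~\eqref{prop-jac-cont}, $J_F=T^2u+g$ with $g\ge 0$, enter to guarantee $J_F\ge 0$ and the admissibility of these manipulations.
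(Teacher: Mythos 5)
The first half of your argument --- writing $|J_F(x)-J_F(y)|\le\int_\gamma|\nabla_H J_F|$, converting the hypothesis~\eqref{thm-Pf-bound} into $|\nabla_H J_F|=J_F\,|Pf|\le c\,J_F(1-N(x)^4)^{-\alpha}$ along the curve, and paying a factor $\ell(\gamma)\le CN(x)$ --- is exactly the paper's route. The genuine gap is in your pointwise bound for $J_F$ on the curve. You go through $J_F\le|ZF|^2=4(Tu)^2$ and then try to trade $\|(Tu)^2\|_{L^1(K_x)}$ for $\|T^2u\|_{L^1(K_x)}$ by integrating by parts in $t$. That step does not close: with a cutoff $\phi$ one gets
\begin{equation*}
\int \phi\,(Tu)^2=-\int u\,(T\phi)(Tu)-\int u\,\phi\,T^2u,
\end{equation*}
which leaves a factor $\|u\|_{L^\infty(K_x)}$ on the main term and a residual term controlled only by $\|Tu\|_{L^1}$. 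Neither can be absorbed into a constant depending only on the geometry of $\Hei$ and on $c$, so the estimate you end up with is genuinely weaker than the one claimed, whose constant is universal.

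The paper avoids this entirely, and this is where your reading of the hypotheses goes astray. The contact identity~\eqref{prop-jac-cont} gives $J_F=T^2u-2(XuYTu-YuXTu)$, so the standing assumption $XuYTu-YuXTu\ge 0$ yields the \emph{pointwise} inequality $J_F\le T^2u$ --- that is its actual role, not merely ``guaranteeing $J_F\ge0$'' as you suggest (and note your sign is reversed: under the hypothesis $J_F=T^2u+g$ with $g\le 0$). Since $T$ commutes with $\Delta_H$, the function $T^2u$ is itself harmonic, so the \emph{exact} mean value property on balls $B(w,\tfrac12(1-N(x)))$ gives $T^2u(w)\le C(1-N(x))^{-4}\|T^2u\|_{L^1(K_x)}$ with a universal constant, and no integration by parts or sub-mean-value inequality for the subharmonic majorant $(Tu)^2$ is needed. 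A secondary inaccuracy: the radial horizontal curves through points with $t\ne0$ are not the straight segments $(sz_0,0)$ but the Balogh--Tyson curves $\gamma(r,(z,t))=(rze^{-\mathrm{i}t\log r/|z|^2},r^2t)$; the argument only requires $\ell(\gamma)\le CN(x)$ by quasiconvexity, not an exact arclength element $dN$.
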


Let us comment that since $u$ is harmonic in $B$, the $L^1$-norm of $T^2u$ can be locally estimated in terms of $L^2$-norms of $u, |\nabla_H u|$ and $X^2u, Y^2u, XYu, YXu$, see e.g.~\cite{cap1, dom, xu} and their references.

\begin{proof}
 Since $u$ is harmonic, and hence, analytic on $\Hei$, the same holds for $J_F$, where $f$ is gradient harmonic. Therefore, $J_F\in \HWtwloc$ and $|\nabla_H J_F|$ is the weak upper gradient of $J_F$, see e.g. ~\cite[Chapter 11]{hk}. Hence, by the definition of the weak gradient, it follows that along any horizontal curve $\ga$ joining $y$ and $x\in B$ we have
$|J_F(x)-J_F(y)|\leq\int_{\ga} |\nabla_H J_F|$. However, since below we need to estimate $d(\ga, \partial B)$, we use the following explicit radial horizontal curves given by \cite[Section 3]{bat}
\begin{displaymath}
\gamma(r,(z,t)) = \left(rz e^{-\mathrm{i}\frac{t}{|z|^2}\log r},r^2 t\right),\quad (z,t)\in \partial B(0,1)\setminus \{z=0\}.
\end{displaymath}
It holds that $N(\gamma(r,(z,t)))=N( \delta_r(z,t) )$ and so by assumption~\eqref{thm-Pf-bound}, we obtain
 \begin{align}
 |J_F(x)-J_F(y)| &\leq \int_{\ga} |\nabla_H J_F| \nonumber \\
 &\leq \int_{\ga} \frac{c}{(1-N(w)^4)^{\alpha}} J_F(w)\leq \frac{c}{(1-N(x)^4)^{\alpha}} \int_{\ga} J_F(w). \label{est-Pf-bound}
\end{align}
The contact conditions for $f$, cf.~\eqref{prop-jac-cont}, together with the assumptions, imply that
\[
 J_F=T^2u-2(XuYTu-YuXTu)\leq T^2u.
\]
Since $u$ is harmonic so to is $T^2u$. Moreover, by the mean-value property, it now follows that for all points $y$ lying on radial curve $\ga$, and for balls $B(w, \frac12(1-N(x)))$, the following estimate holds:
\[
 T^2u(w)=\vint_{B(w,\frac12(1-N(x)))}T^2u(q)\leq \frac{16}{(1-N(x))^4|B(0,1)|}\|T^2u\|_{L^1(K_x)},
\]
where $K_x\Subset B(0,1)$ is a minimal compact set containing the balls $B(w, \frac12(1-N(x)))$ for all $w\in \ga$. Note here that the diameter of $K_x$  depends on $N(x)$ only since $K_x$ is a compact hull of balls of the form $B(w,\frac12(1-N(x)))$. Upon recalling that by quasiconvexity of the ball $B$, we have $\ell(\ga)\leq C N(x)$ where $C$ depends on the geometry of $\Hei$, and so we may complete estimate~\eqref{est-Pf-bound} as follows:
 \begin{align*}
 |J_F(x)-J_F(y)| & \leq \frac{c}{(1-N(x)^4)^{\alpha}} \int_{\ga} J_F(w) \\
 & \leq \frac{c}{(1-N(x)^4)^{\alpha}} \int_{\ga} \vint_{B(w,\frac12(1-N(x)))}T^2u(q) \\
 & \leq \frac{CN(x)}{(1-N(x))^4}\frac{1}{(1-N(x)^4)^{\alpha}} \|T^2u\|_{L^1(K_x)}.
\end{align*}
Since,
\[
 \frac{CN(x)}{(1-N(x))^4}\frac{1}{(1-N(x)^4)^{\alpha}}\leq \frac{CN(x)}{(1-N(x))^{4+\alpha}},
\]
the assertion follows.
\end{proof}

\section{Appendix: proof of Lemma~\ref{vzerosol}}

\begin{proof} We begin by establishing some fundamental commutation relations. First of all we have
	\begin{align*}
	0&=[Z, [ Z,\bar Z]]= Z [ Z,\bar Z]-  [ Z,\bar Z]Z = ZZ \bar Z -  2Z\bar Z Z  +  \bar Z Z Z,\\
	0&=[\bar Z, [ Z,\bar Z]]= \bar Z [ Z,\bar Z]-  [ Z,\bar Z] \bar Z= 2\bar Z Z \bar Z -  \bar Z\bar Z Z -  Z \bar Z \bar Z
	\end{align*}
	which imply
	\begin{align}
	2Z\bar Z Z &= ZZ \bar Z   +  \bar Z Z Z \label{Zsquared-2}\\
	2\bar Z Z \bar Z &=  \bar Z\bar Z Z +  Z \bar Z \bar Z. \label{Zsquared-1}
	\end{align}
	Moreover, multiplying \eqref{Zsquared-2} throughout by $\bar Z$ on the right and multiplying \eqref{Zsquared-1} throughout by $Z$ on the left, and then equating the results, gives \begin{align}
	\bar Z Z Z \bar  Z &=Z\bar Z \bar  Z Z. \label{Zsquared1}
	\end{align}
	Furthermore, since $2i T=[\bar Z ,Z]$ we have that
	\begin{align*}
	4 T^2 &= - \Big ( Z\bar Z Z\bar Z - Z\bar Z \bar Z Z - \bar Z Z Z\bar Z + \bar Z Z \bar Z Z \Big )\\
	&= - \Big ( Z ( Z \bar Z +2iT) \bar Z - Z\bar Z \bar Z Z - \bar Z Z Z\bar Z + \bar Z ( \bar Z  Z-2iT) Z \Big )\\
	&= - \Big ( Z Z \bar Z \bar Z - Z\bar Z \bar Z Z - \bar Z Z Z\bar Z + \bar Z  \bar Z  Z Z \Big ) + 2i (\bar Z Z  - Z \bar Z) T\\
	&= - \Big ( Z Z \bar Z \bar Z   - Z\bar Z \bar Z Z - \bar Z Z Z\bar Z + \bar Z  \bar Z  Z Z \Big ) - 4 T^2.
	\end{align*}
	Since $v_0$ is real valued, $Z^2v_0=0$ implies $\bar Z^2v_0=0$, and the previous identity together with \eqref{Zsquared1} implies that
	\begin{align}
	8T^2v_0 &= Z\bar Z \bar Z Zv_0 + \bar Z Z Z\bar Z v_0  \nonumber \\
	4T^2v_0 &=  Z\bar Z \bar Z Zv_0 =  \bar Z Z Z \bar Z v_0. \label{Zsquared2}
	\end{align}
	From \eqref{Zsquared-2} and  \eqref{Zsquared-1} we also have
	\begin{align}
	& ZZ \bar Z v_0 =  2Z\bar Z Z  v_0  \label{Zsquared3}\\
	&\bar Z \bar Z Z v_0 =  2   \bar Z  Z  \bar Z  v_0. \label{Zsquared4}
	\end{align}
	These identities together with \eqref{Zsquared1} give the following
	\begin{align}
	(\bar Z Z)^2  v_0 &= (Z  \bar Z)^2  v_0. \label{Zsquared5}
	\end{align}
	Using the two expression given by \eqref{Zsquared2} and $2i T=[\bar Z ,Z]$, we have
	\begin{align*}
	4T^3v_0 &= Z\bar Z \bar Z T Z v_0  &   4T^3v_0 &= \bar Z  Z  Z T \bar Z v_0 \\
	&= \frac{i}{2}Z\bar Z \bar Z Z \bar Z Z v_0   &  &=- \frac{i}{2} \bar Z  Z  Z \bar Z Z \bar Z v_0\\
	& = \frac{i}{2} \bar Z Z Z \bar Z \bar Z Z v_0 \quad \text{by } \eqref{Zsquared1} &  &=- \frac{i}{2}  Z \bar Z \bar Z  Z Z \bar Z v_0 \quad  \text{by }  \eqref{Zsquared1} \\
	& = \frac{i}{2} \bar Z Z  \bar Z  Z  Z \bar Z v_0 \quad  \text{by } \eqref{Zsquared1} &  &=- \frac{i}{2}  Z \bar Z  Z  \bar Z \bar Z  Z v_0 \quad  \text{by }  \eqref{Zsquared1} \\
	& = i \bar Z Z  \bar Z   Z\bar Z Z  v_0 = i(\bar Z Z )^3v_0 \quad  \text{by }  \eqref{Zsquared3} &  &= -i  Z \bar Z     Z \bar Z  Z  \bar Z  v_0   =-i( Z  \bar Z )^3v_0 \quad  \text{by }  \eqref{Zsquared4} .
	\end{align*}
	
	Hence
	\begin{align*}
	-(\bar Z Z)^3  v_0 &= (Z  \bar Z)^3  v_0 \\
	&= (Z  \bar Z) ( \bar Z  Z)^2  v_0 \quad \eqref{Zsquared5}\\
	&= Z  \bar Z \bar Z  Z \bar Z  Z v_0\\
	&= \bar Z   Z  Z  \bar Z \bar Z  Z v_0 \quad \eqref{Zsquared1}\\
	&= \bar Z   Z \bar Z   Z  Z \bar Z v_0 \quad \eqref{Zsquared1}\\
	&= 2\bar Z   Z \bar Z   Z  \bar Z  Z v_0=2(\bar Z Z)^3  v_0 \quad \eqref{Zsquared3} \\
	\end{align*}
	and we conclude that $(\bar Z Z)^3  v_0=0$ and so $T^3v_0=0$. Therefore, we write \begin{equation}
	v_0=a(x,y)t^2+b(x,y)t +c(x,y), \label{v-zero}
	\end{equation}
	an so the equation  $Z^2v_0=0$ can be split by taking real and imaginary parts and equating the coefficients of the powers of $t$ with $0$. The result is the following system of equations:
	\begin{align*}
	& {\frac {\partial ^{2}a}{\partial {x}^{2}}}  - {\frac {\partial ^{2}a}{\partial {y}^{2}}} =0  & &  {\frac {\partial ^{2}a}{\partial {x} \partial {y}}}=0\\
	& {\frac {\partial ^{2}b}{\partial {y}^{2}}}  -{\frac {\partial ^{2}b}{\partial {x}^{2}}} =   8 y{\frac {\partial a}{\partial x}} + 8\,x{\frac {\partial a}{\partial y}} & &  {\frac {\partial ^{2}b}{\partial x\partial y}} =  4\,x{\frac {\partial a}{\partial x}} -4 y {\frac {\partial a}{\partial y}} \\
	& {\frac {\partial ^{2}c}{\partial {y}^{2}}}  - {\frac {\partial ^{2}c}{\partial {x}^{2}}}= 4y{\frac {\partial b}{\partial x}} +4x{\frac {\partial b}{\partial y}} +8({y}^{2}-{x}^{2})a & & {\frac {\partial ^{2}c}{\partial x\partial y}} = 2x{\frac {\partial b}{\partial x}}  -2y{\frac {\partial b}{\partial y}} + 8\,xy a.
	\end{align*}
	Starting at the first row we integrate row by row as follows: Integrate the second equation to get expressions for the first partial derivatives and differentiate to get expressions for the nonmixed second order partial derivatives. Substitute these expressions into the first equation and resolve parameters.
	
	For example, the first row implies that
	$$a(x, y) = A_4(y^2+x^2) + A_3y + A_2 x + A_1. $$
	The updated second row is \begin{align}
	& {\frac {\partial ^{2}b}{\partial {y}^{2}}}  -{\frac {\partial ^{2}b}{\partial {x}^{2}}} =  32 A_4 xy + 8A_2 y + 8A_3 x & &  {\frac {\partial ^{2}b}{\partial x\partial y}} =  8A_4 (x^2-y^2)+ 4 A_2 x -  4A_3 y. \label{bequs}
	\end{align}
	The second equation above implies that
	\begin{align}  \frac {\partial b}{\partial y} &=-8 A_4 xy^{2} - 4A_3xy + \frac{8}{3} A_44 x^3 + 2 A_2 x^2 +  b_1( y ) \nonumber\\
	{\frac {\partial b}{\partial x}} & = 8 A_4 x^2y  + 4A_2 xy - \frac{8}{3} A_4  y^3 - 2 A_3 y^2   +  b_2( x ) \label{bequs2}
	\end{align} for some yet to be determined auxiliary functions $b_1(y)$ and $b_2(x)$.  Differentiating then gives
	\begin{align*}  \frac {\partial^2 b}{\partial y^2} &=-16 A_4 xy - 4A_3x + b_1'( y )\\
	{\frac {\partial^2 b}{\partial x^2}} & = 16 A_4 xy  + 4A_2y + b_2'( x )
	\end{align*} which when inserted into the first equation at \eqref{bequs} gives a consistency condition, namely $A_4=0$, moreover the result is the separated equation
	\begin{align*}
	&  b_1'( y ) -12A_2y  =  b_2'( x )+ 12A_3 x =B_1
	\end{align*}
	where $B_1$ is constant. Therefore
	\begin{align*}
	&  b_1'( y ) = 12A_2y  + B_1 \quad {\rm and} \quad  b_2'( x )=- 12A_3 x + B_1,
	\end{align*} which imply that
	\begin{align*}
	&  b_1( y ) = 6A_2y^2  + B_1y +B_2  \quad {\rm and} \quad b_2( x )=- 6A_3 x^2 + B_1x +B_3,
	\end{align*}
	and \eqref{bequs2} implies that
	\begin{align*}
	b &=  - 2A_3xy^2  + 2 A_2 x^2y + 2A_2y^3  + \frac{1}{2} B_1 y^2 +B_2 y +b_3(x)  \\
	b & =  2A_2 x^2 y - 2 A_3 xy^2 - 2A_3 x^3 +\frac{1}{2} B_1 x^2 +B_3 x +b_4(y) .
	\end{align*}
	Equating these expression for $b$ and separating variables shows that
	\begin{align*}
	- 2A_3 x^3 +\frac{1}{2} B_1 x^2 +B_3 x -b_3(x)  &=   2A_2y^3  + \frac{1}{2} B_1 y^2 +B_2 y -b_4(y) =B_4
	\end{align*} where $B_4$ is constant. Hence we have
	\begin{align*}
	b_3(x) = - 2A_3 x^3 +\frac{1}{2} B_1 x^2 +B_3 x -B_4   \quad {\rm and} \quad  b_4(y) &= 2A_2y^3  + \frac{1}{2} B_1 y^2 +B_2 y -B_4
	\end{align*}
	which imply that
	\begin{align*}
	b &= 2 A_2( y^3+ y x^2) -2A_3 ( xy^2  + x^3) + B_1(y^2+x^2) + B_2 y +B_4 x +B_3.
	\end{align*}
	Repeating the procedure on the third row of the original system yields $B_1=0$,  $A_2=0$, $A_3=0$ and
	$$c(x, y) = A_1(x^4+2 y^2 x^2+y^4)  - B_2 x(y^2  + x^2) + B_4 y(y^2+ x^2) + C_4 (y^2+x^2) +C_3 y + C_2x +C_1.$$
	The relabeling $$A_1=c_1, \, \, B_2=c_2, \, \,  B_3=c_7, \, \,  C_1=c_5,  \, \, C_2=c_8,  \, \, C_3=c_6, \, \,  C_4=c_4,  \, \, B_4=c_3$$
	results in the final form of $v_0$ as claimed. \end{proof}

\end{document}